\def\YYint#1#2#3{{\setbox0=\hbox{$#1{#2#3}{\iint}$}
    \vcenter{\hbox{$#2#3$}}\kern-.50\wd0}}
\def\Xint#1{\mathchoice
	{\XXint\displaystyle\textstyle{#1}}%
	{\XXint\textstyle\scriptstyle{#1}}%
	{\XXint\scriptstyle\scriptscriptstyle{#1}}%
	{\XXint\scriptscriptstyle\scriptscriptstyle{#1}}%
	\!\int}
\def\XXint#1#2#3{{\setbox0=\hbox{$#1{#2#3}{\int}$}
		\vcenter{\hbox{$#2#3$}}\kern-.5\wd0}}
\def\aver#1{\Xint-_{#1}}
\def\Xint#1{\mathchoice
    {\XXint\displaystyle\textstyle{#1}}%
    {\XXint\textstyle\scriptstyle{#1}}%
    {\XXint\scriptstyle\scriptscriptstyle{#1}}%
    {\XXint\scriptscriptstyle\scriptscriptstyle{#1}}%
      \!\int}
\def\XXint#1#2#3{{\setbox0=\hbox{$#1{#2#3}{\int}$}
    \vcenter{\hbox{$#2#3$}}\kern-.50\wd0}}
 \newcommand{\mb}{\mathbb}
 \newcommand{\mc}{\mathcal}
 \newcommand{\f}{\frac}
 \newcommand{\ld}{\lambda}
 \newcommand{\dd}{\partial}
 \newcommand{\iy}{\infty}
 \newcommand{\tn}{\textnormal}
 \newcommand{\py}{  \partial_{x_{n+1}}^a} %\usepackage{amsmath,verbatim,graphicx,enumerate,fancybox}
 \newtheorem{theorem}{Theorem}[section]
 \newtheorem{lemma}[theorem]{Lemma}
 \newtheorem{corollary}[theorem]{Corollary}
 \newtheorem{remark}[theorem]{Remark}
 \numberwithin{equation}{section}
 \newcommand{\Rn}{\mathbb{R}^n}
 \newcommand{\bp}{\begin{prob}}
 	\newcommand{\bpr}{\begin{proof}}
 		\newcommand{\epr}{\end{proof}}
 	\newcommand{\Rb}{\mathbb{R}}
 	\theoremstyle{definition}
 	\title[Decay at infinity etc.]{Decay at infinity for solutions to some fractional parabolic equations}
  \author{Agnid Banerjee}
\address{TIFR CAM, Bangalore-560065} \email[Agnid Banerjee]{agnidban@gmail.com}
\author{Abhishek Ghosh  }
\address{TIFR CAM, Bangalore-560065} 
\curraddr{Institute of Mathematics, Polish Academy of Sciences, Ul. \'Sniadeckich 8, 00-656 Warsaw, Poland.}
\email[Abhishek Ghosh]{abhi170791@gmail.com; agosh@impan.pl}
\thanks{A.B is supported in part  by Department of Atomic Energy,  Government of India, under
project no.  12-R \& D-TFR-5.01-0520. A. G is supported by TIFR-CAM, Bangalore-560065, India.}
\keywords{}
\subjclass{35A02, 35B60, 35K05}
\begin{document}
		\begin{abstract}
		For $s \in [1/2, 1)$, let $u$ solve $(\partial_t - \Delta)^s u = Vu$ in $\mathbb R^{n}  \times [-T, 0]$ for some $T>0$ where  $||V||_{ C^2(\mathbb R^n \times [-T, 0])} < \infty$. We show that     if for some $0<c< T$ and $\epsilon>0$  $$\aver{[-c,0]} u^2(x, t) dt \leq Ce^{-|x|^{2+\epsilon}}\  \forall x \in \mathbb R^n,$$ then $u \equiv 0$ in $\mathbb R^{n} \times [-T, 0]$.  
	\end{abstract}
	
  \maketitle
  
  \tableofcontents

 	%%%%%%%%%%%%%%%%%%%%%%%%%%%%%%%%%%%%%%%%%%%%%%%%%%%%%%%%%%%%%%%%%%%%%%%%%%%%%%%
 	%%%%%%%%%%%%%%%%%%%%%%%%%%%%%%%%%%%%%%%%%%%%%%%%%%%%%%%%%%%%%%%%%%%%%%%%%%%%%%
 	%%%%%%%%%%%%%%%%%%%%%%%%%%%%%%%%%%%%%%%%%%%%%%%%%%%%%%%%%%%%%%%%%%%%%%%%%%%%%%%
 	%%%%%%%%%%%%%%%%%%%%%%%%%%%%%%%%%%%%%%%%%%%%%%%%%%%%%%%%%%%%%%%%%%%%%%%%%%%%%%
 	
 	%\section{Global unique continuation property} \label{Carleman estimate and Unique continuation}
 	%
\section{Introduction}
E.M Landis and O. A. Oleinik in \cite{LO} asked the following question:

\medskip

\textbf{Question A:} \emph{Let $u$ be a bounded solution to the following parabolic differential inequality
\begin{equation}\label{locpar}
|\Delta u  - u_t| \leq C( |u| + |\nabla u|)
\end{equation}
in $\mathbb R^n \times [-T, 0]$ such that for some $\epsilon>0$ \begin{equation}\label{decay} |u(x,0)| \leq Ce^{-|x|^{2+\epsilon}}, \forall x \in \mathbb R^n. \end{equation} Then is  $u \equiv 0$ in $\mathbb R^{n} \times [-T, 0]$? }

This question was answered in affirmative  in the work \cite{EKVP} where among other things, the authors  showed   that the following decay estimate at infinity holds for bounded  solutions to \eqref{locpar} provided $$||u(\cdot, 0)||_{L^{2}(B_1)} > 0.$$
\begin{equation}\label{ekvp1}
||u(\cdot, 0)||_{L^{2}(B_1(x))} \geq e^{-N |x|^2 \log |x|},\ |x| \geq N\ \text{where $N$ is some large universal constant}.
\end{equation}
Now using the estimate \eqref{ekvp1}, the answer to the Landis-Oleinik conjecture is seen as follows:

\emph{Assume that the decay as in \eqref{decay} holds.  Since \[ e^{- N R^2 \log R} \gg e^{-R^{2+\epsilon}},\] as $R \to \infty$, thus   from  \eqref{ekvp1} it follows that $u(\cdot, 0) \equiv 0$ in $B_1$. Now by applying the  space like strong unique continuation result in \cite{EF_2003, EFV_2006} we deduce that $u(\cdot, 0) \equiv 0$ in $\mathbb R^n$. Subsequently by applying the backward uniqueness result in \cite{Ch, ESS, Po} we find that $u \equiv 0$ in $\mathbb R^n \times (-T, 0]$.}

We also refer to \cite[Theorem 4]{EKVP1} for a related result.  See also \cite{EKPV16} for a further sharpening of the result in \cite{EKVP1}. The proof of the inequality  \eqref{ekvp1} in \cite{EKVP} is based on a fairly non trivial application of a Carleman estimate derived in the pioneering work of Escauriaza-Fernandez-Vessella in \cite{EF_2003, EFV_2006}   on space like strong unique continuation for local parabolic equations combined with an appropriate rescaling argument inspired by ideas in \cite{BK}.  Over here, we would like to mention that such results are of interest in control theory, see for instance \cite{MZ}.  They have also turned out to be useful in the regularity theory for Navier Stokes equations, see \cite{SS}. 

\subsection{Statement of the main results}
In this work, we derive the following nonlocal analogue of the estimate in \eqref{ekvp1}. We refer to Section \ref{s:n} for the relevant notions and notations. 

\begin{theorem}\label{main}
For $s \in [1/2, 1)$, let $u \in \text{Dom}(H^s)$ be a solution to 
\begin{equation}\label{e0}
(\partial_t - \Delta)^s u = Vu,
\end{equation}
in $\mathbb R^n \times [-T, 0]$ where  $||V||_{ C^{2}(\mathbb R^n \times [-T, 0])} \leq C$. Assume  that for some $0< c<T$ \begin{equation}\label{non1} ||u||_{L^{2}\left(B_{\frac{\sqrt{c}}{2}} \times (-c/4, 0]\right)} \geq \theta>0. \end{equation} Then there exists universal $M>1$ large enough depending on $\theta, s, n, c$ and $C$ such that $\forall x_0 \in \mathbb R^n$ with  $|x_0| \geq M$, we have
\begin{equation}\label{main1}
\int_{B_2(x_0) \times (-c, 0]} u^2 dxdt > e^{-M |x_0|^2 \log |x_0|}.
\end{equation}

\end{theorem}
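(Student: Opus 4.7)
The strategy is to mirror the argument of Escauriaza--Kenig--Ponce--Vega \cite{EKVP} for the local case, adapting it to the fractional setting via the Caffarelli--Silvestre type extension for the fractional heat operator (due to Nystr\"om--Sande and Stinga--Torrea).

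\textbf{Step 1 (Extension).} First I would realize $(\partial_t-\Delta)^s$ as a Dirichlet-to-Neumann operator: lift $u$ to a function $U=U(x,y,t)$ on $\mathbb R^n\times(0,\infty)\times[-T,0]$ solving the degenerate parabolic extension
\[
y^{1-2s}\partial_t U=\operatorname{div}_{(x,y)}\bigl(y^{1-2s}\nabla_{(x,y)}U\bigr),
\]
with $U|_{y=0}=u$ and the Neumann-type boundary condition
\[
-\lim_{y\to 0^+} y^{1-2s}\partial_y U = c_s\,(\partial_t-\Delta)^s u = c_s\,Vu.
\]
This converts the nonlocal equation into a local (degenerate) parabolic PDE with a Robin-type boundary condition in which the potential enters as a boundary source. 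Because $s\in[1/2,1)$, the weight exponent $a=1-2s\in(-1,0]$ is a Muckenhoupt $A_2$ weight, so all standard energy, trace and Caccioppoli estimates for the extension are available.

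\textbf{Step 2 (Carleman estimate on the extension).} Next I would apply a Carleman estimate of Escauriaza--Fern\'andez--Vessella type (see \cite{EF_2003,EFV_2006}) for the weighted operator $y^a\partial_t-\operatorname{div}(y^a\nabla\cdot)$, with a Gaussian-type Carleman weight $e^{\tau \phi}$ well-adapted to space-like unique continuation. Carleman estimates of this flavor for the extension problem have been established in related works on strong unique continuation for fractional parabolic equations. The boundary term produced by integration by parts in $y$ is, via the Neumann condition above, governed by $Vu$, and is absorbed using $\|V\|_{C^2}\le C$ together with a trace/Hardy inequality in $y$. Schematically, for compactly supported test fields $W$ adapted to $U$,
\[
\tau\int e^{2\tau\phi}|W|^2\,y^a\,dx\,dy\,dt\;\leq\;C\int e^{2\tau\phi}\bigl|(y^a\partial_t-\operatorname{div}(y^a\nabla))W\bigr|^2 y^{-a}\,dx\,dy\,dt+\text{(bdy terms in $Vu$)}.
\]

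\textbf{Step 3 (Rescaling \`a la Bourgain--Kenig).} Finally, fix $x_0$ with $R:=|x_0|\gg 1$ and perform the parabolic rescaling
\[
U_R(x,y,t)=U(x_0+Rx,\,Ry,\,R^2 t),
\]
which satisfies the extension PDE with a rescaled potential of size $R^{2s}\|V\|_{L^\infty}$ (and derivative bounds scaling accordingly). The hypothesis \eqref{non1} yields a lower bound for $U_R$ on a unit-size cylinder centered at the translate of the origin, while \eqref{main1} is precisely an upper bound for $U_R$ near $(0,0,0)$. Applying the Carleman estimate of Step 2 to $U_R$ on a suitable cutoff, combined with an EFV-style three-cylinder propagation-of-smallness interpolation, one extracts a lower bound of the shape $e^{-MR^2\log R}$; the $R^2\log R$ rate emerges exactly as in the local case from optimizing the Carleman parameter $\tau$ against the rescaled potential and the Gaussian weight on a cylinder of radius $\sim R$. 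The conclusion is then transferred back to $U$ and restricted to $y=0$ to recover the claim for $u$.

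\textbf{Main obstacle.} The key technical difficulty is orchestrating Steps 2 and 3 so that (i) the Carleman weight is compatible with the degenerate measure $y^a\,dx\,dy\,dt$ and simultaneously sharp enough to produce no more than a $\log R$ loss, and (ii) the boundary contribution from the Robin condition, now coupling the bulk problem to the potential $Vu$ (rather than to a pointwise differential inequality as in \eqref{locpar}), is absorbed without destroying the $R^2\log R$ rate after rescaling. A secondary subtlety, absent in the local case, is that $U$ is not compactly supported in $y$: careful cutoff arguments using the decay of the extension in $y$ (uniform in $(x,t)$ on bounded sets) are required to legitimately apply the Carleman inequality, and this is precisely where the restriction $s\ge 1/2$ becomes convenient.
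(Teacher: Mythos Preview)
Your outline captures the right architecture---extension, Carleman, Bourgain--Kenig rescaling---and this is indeed what the paper does. But two of your transitions are glossed over in a way that hides the real work, and your diagnosis of where $s\ge 1/2$ enters is off.

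\textbf{Boundary $\to$ bulk.} You write that ``the hypothesis \eqref{non1} yields a lower bound for $U_R$ on a unit-size cylinder.'' This is not free: \eqref{non1} is an $L^2$ lower bound on $u=U|_{y=0}$, not on $U$ in the bulk with weight $y^a$. The paper spends Lemmas~\ref{com-un}--\ref{com-conse1} on this: first a compactness argument (uniform H\"older estimates for the extension plus Arzel\`a--Ascoli) to show that a boundary lower bound forces a weighted bulk lower bound, then a monotonicity-in-time result from \cite{ABDG} to propagate that lower bound over a short time interval. Without this, you cannot seed the rescaled Carleman argument.

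\textbf{Bulk $\to$ boundary.} More seriously, your final sentence ``transferred back to $U$ and restricted to $y=0$'' is the hardest step and cannot be done by mere restriction. The Carleman/doubling argument gives you a lower bound on $\int_{\mathbb B_1^+((x_0,0))\times I} U^2 y^a$, and you need a lower bound on $\int_{B_2(x_0)\times I} u^2$. The paper invokes a quantitative propagation-of-smallness estimate from \cite{AryaBan} (their Corollary~4.4), which bounds the weighted bulk $L^2$ norm of $U$ by a power of the $W^{2,2}$ norm of $u$ on the thin set; this yields a lower bound on $\|u\|_{W^{2,2}}$. To downgrade $W^{2,2}$ to $L^2$ the paper then uses interpolation inequalities (Lemma~\ref{interpol}) between the thin trace and the bulk, optimizing a parameter $\eta$. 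None of this is routine, and it is the reason the final bound lives in a space-time cylinder rather than at a fixed time.

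\textbf{Where $s\ge 1/2$ actually matters.} It is not a cutoff-in-$y$ issue. In the paper it enters twice, both algebraically: (i) in the Carleman estimate (Theorem~\ref{carl1-un}), absorbing the rescaled Robin boundary term requires $\alpha^{2s}\gtrsim R^{2s+1}$ with $\alpha\sim R^2$, i.e.\ $4s\ge 2s+1$; (ii) in the final interpolation step, one needs $\eta^{-1}\eta_1^s\le \eta^s$ with $\eta_1=\eta^3$, i.e.\ $3s-1\ge s$. Your ``Main obstacle'' paragraph is close in spirit for point~(i), but your stated reason (decay of the extension in $y$) is not what drives the restriction.
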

 
As a consequence of Theorem \ref{main}, the following ``average  in time''  version of  the Landis-Oleinik type result follows in our nonlocal setting.

\begin{corollary}\label{main0}
For $s \in [1/2, 1)$, let $u \in \text{Dom}(H^s)$ be a  solution to \eqref{e0} in $\mathbb R^n \times [-T, 0]$. If for some $\epsilon >0$ and  $0< c<T$, we have that  $$\aver{[-c,0]} u^2 (x,t)  dt \leq Ce^{-|x|^{2+\epsilon}},\  \forall x \in \mathbb R^{n},$$ then $u \equiv 0$ in $\mathbb R^n \times [-T, 0]$. 

\end{corollary}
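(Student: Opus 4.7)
The plan is to follow the same contradiction scheme outlined in the introduction for the local case, substituting Theorem \ref{main} for the estimate \eqref{ekvp1}. The argument splits into two stages: first, the prescribed super-Gaussian decay forces $u$ to vanish on the seed cylinder $B_{\sqrt{c}/2} \times (-c/4, 0]$ that appears in the hypothesis \eqref{non1}; second, this vanishing is propagated to all of $\mathbb{R}^n \times [-T, 0]$ via space-like strong unique continuation followed by backward uniqueness for $(\partial_t - \Delta)^s$.

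For the first stage, suppose to the contrary that
\[
\theta := \| u \|_{L^{2}\left(B_{\sqrt{c}/2} \times (-c/4, 0]\right)} > 0.
\]
Applying Theorem \ref{main} with this $\theta$ produces a constant $M > 1$ such that, for every $x_0 \in \mathbb{R}^n$ with $|x_0| \geq M$,
\[
\int_{B_2(x_0) \times (-c, 0]} u^2 \, dx\, dt > e^{-M |x_0|^2 \log |x_0|}.
\]
On the other hand, multiplying the hypothesis $\aver{[-c,0]} u^2(x,t)\, dt \leq C e^{-|x|^{2+\epsilon}}$ by $c$ and integrating in $x$ over $B_2(x_0)$, together with the trivial bound $|x| \geq |x_0| - 2$ on $B_2(x_0)$, yields
\[
\int_{B_2(x_0) \times (-c, 0]} u^2\, dx\, dt \; \leq \; c\,C\, |B_2|\, e^{-(|x_0| - 2)^{2+\epsilon}}.
\]
Since $(|x_0|-2)^{2+\epsilon}$ dominates $M |x_0|^2 \log |x_0|$ as $|x_0| \to \infty$, the upper bound becomes strictly smaller than the lower bound for $|x_0|$ large, a contradiction. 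Hence $\theta = 0$ and $u$ vanishes identically on $B_{\sqrt{c}/2} \times (-c/4, 0]$.

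In the second stage, from the vanishing of $u$ on an open space-time cylinder one invokes a space-like strong unique continuation principle for the fractional heat equation with potential, typically formulated via the Caffarelli-Silvestre/Stinga-Torrea-type extension to a weighted degenerate local parabolic problem in $\mathbb{R}^{n+1}_+$, to deduce $u(\cdot, t) \equiv 0$ on $\mathbb{R}^n$ for each $t$ in a small interval ending at $0$. A backward uniqueness theorem for $(\partial_t - \Delta)^s u = Vu$ then propagates this vanishing from $t$ near $0$ down to $t = -T$, yielding $u \equiv 0$ on $\mathbb{R}^n \times [-T, 0]$ as claimed.

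The main obstacle is the second stage. Unlike the local case, where the Carleman estimates of Escauriaza-Fernandez-Vessella and the Chen/Escauriaza-Seregin-Sverak/Poon backward uniqueness results apply directly, their nonlocal counterparts for $s \in [1/2, 1)$ are not off-the-shelf: one must lift the problem to the extension, work with tailored Carleman inequalities in the weighted degenerate setting, and carefully handle the nonlocal tail of $u$. Granting such extension-based unique continuation and backward uniqueness ingredients, the passage from Theorem \ref{main} to Corollary \ref{main0} is an essentially routine comparison of growth rates performed in the first stage above.
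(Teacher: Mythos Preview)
Your first stage is exactly the paper's argument: assume \eqref{non1} holds with some $\theta>0$, apply Theorem \ref{main}, and contradict the super-Gaussian decay hypothesis by comparing $e^{-M|x_0|^2\log|x_0|}$ with $e^{-|x_0|^{2+\epsilon}}$ for large $|x_0|$.

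For the second stage your route is slightly different and you overstate the difficulty. The paper does not separate ``space-like SUCP'' from ``backward uniqueness'' as two distinct steps; instead it observes that vanishing of $u$ on the seed cylinder implies $u$ vanishes to infinite order in space-time at the origin, and then invokes a single off-the-shelf result, namely \cite[Theorem 1.2]{BG} (the strong unique continuation/backward uniqueness theorem of Banerjee--Garofalo for $(\partial_t-\Delta)^s$), to conclude $u\equiv 0$ on $\mathbb R^n\times[-T,0]$. So the nonlocal unique continuation ingredients you flag as ``not off-the-shelf'' are in fact already available in the references \cite{BG} and \cite{ABDG} cited in the paper, and the passage is indeed routine. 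Your two-step variant (space-like SUCP via \cite{ABDG} followed by backward uniqueness) would also work, but is a detour compared to the paper's single citation.
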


\subsection{Key ideas in the proof of Theorem \ref{main}:}
The following are the key steps in the proof of our main result Theorem \ref{main}.

\emph{Step 1:} Via a compactness argument as in Lemma \ref{com-un} with a monotonicity in time result in \cite[Lemma 3.1]{ABDG}, we first show that a non-degeneracy condition  at the boundary for the nonlocal problem as in \eqref{non1} implies a   similar non-vanishing  condition for the corresponding extension problem \eqref{exprob}. See Lemma \ref{com-conse1} below.

\emph{Step 2:} Then by means of a quantitative monotonicity in time result  as in Lemma \ref{lemma-mont} and a quantitative Carleman type estimate  as in Theorem \ref{carl1-un}, we show by adapting the rescaling  arguments in \cite{EKVP}  that the solution $U$ to the corresponding extension problem satisfies a similar decay estimate at infinity as in \eqref{main1} above.  See Theorem \ref{main-thick} below. We would like to mention that both Lemma \ref{lemma-mont} and the Carleman estimate in Theorem \ref{carl1-un} are subtle variants of the estimates recently established by two of us in \cite{AA}.  The  main new feature of the both the results  is a certain quantitative dependence of the estimates   on the  rescaling parameter $R$  ( see \eqref{u_r} below)  as  $R \to \infty$. This is precisely where we require $s \geq 1/2$. This constitutes the key novelty of our work.

\emph{Step 3:} The decay estimate at infinity for the extension problem is then transferred to  the non-local problem by using a propagation of smallness estimate derived in \cite{AryaBan}. Such a propagation of smallness estimate constitutes the parabolic analogue of the one due to Ruland and Salo in \cite{RS}.  It is to be noted that via the propagation of smallness  estimate  in \eqref{pos} below, the  transfer  of  the decay information from the bulk in the extension problem \eqref{exprob} to the boundary in the nonlocal problem \eqref{e0}   occurs only in ``space-time'' regions and not at a given time level. This is precisely why  we require  an ``average in time''  decay assumption   in Corollary \ref{main0} instead of a pointwise decay assumption at $t=0$ for the nonlocal Landis-Oleinik type result to hold.

For various results on unique continuation for nonlocal fractional laplacian type equations and its time dependent counterpart, we refer to \cite{AryaBan, ABDG, AT, BG, BG1,  BVS, BS1, AA,   BL,  FF, FPS,  LLR,  Ru, Ru1, Ru2,  RS, RSV, RW, Yu, Zhu0}, each of which are either based on Carleman estimates  as in \cite{AKS}  or  on the  frequency function approach  as in \cite{GL} followed by a blowup argument.

The paper is organized as follows. In Section \ref{s:n}, we introduce some basic notations and notions and gather some known results that are relevant for our work. In Section \ref{s:3}, we prove our key estimates in Lemma \ref{lemma-mont} and Theorem \ref{carl1-un}. In Section \ref{s:main}, we finally prove our main results Theorem \ref{main} and Corollary \ref{main0}.

\section{Preliminaries}\label{s:n}
In this section we introduce the relevant notation and gather some auxiliary  results that will be useful in the rest of the paper. Generic points in $\mathbb R^n \times \mathbb R$ will be denoted by $(x_0, t_0), (x,t)$, etc. For an open set $\Omega\subset \mathbb R^n_x\times \mathbb R_t$ we indicate with $C_0^{\infty}(\Omega)$ the set of compactly supported smooth functions in $\Omega$.  We also indicate by $H^{\alpha}(\Omega)$ the non-isotropic parabolic H\"older space with exponent $\alpha$ defined in \cite[p. 46]{Li}. The symbol $\mathscr S(\mathbb R^{n+1})$ will denote the Schwartz space of rapidly decreasing functions in $\mathbb R^{n+1}$. For $f\in \mathscr S(\mathbb R^{n+1})$ we denote its Fourier transform by 
\[
 \hat f(\xi,\sigma) = \int_{\mathbb R^n\times \mathbb R} e^{-2\pi i(\langle \xi,x\rangle + \sigma t)} f(x,t) dx dt = \mathscr F_{x\to\xi}(\mathscr F_{t\to\sigma} f).
\]
The heat operator in $\mathbb R^{n+1} = \mathbb R^n_x \times \mathbb R_t$ will be denoted by $H = \partial_t - \Delta_x$. Given a number $s\in (0,1)$ the notation $H^s$ will indicate the fractional power of $H$ that in  \cite[formula (2.1)]{Samko} was defined on a function $f\in \mathscr S(\mathbb R^{n+1})$ by the formula
\begin{equation}\label{sHft}
\widehat{H^s f}(\xi,\sigma) = (4\pi^2 |\xi|^2 + 2\pi i \sigma)^s\  \hat f(\xi,\sigma),
\end{equation}
where we have chosen the principal branch of the complex function $z\to z^s$. Consequently, we have that the natural domain of definition of $H^s$ is as follows:
\begin{align}\label{dom}
\mathscr H^{2s} & =  \operatorname{Dom}(H^s)   = \{f\in \mathscr S'(\mathbb R^{n+1})\mid f, H^s f \in L^2(\mathbb R^{n+1})\}
\\
&  = \{f\in L^2(\mathbb R^{n+1})\mid (\xi,\sigma) \to (4\pi^2 |\xi|^2 + 2\pi i \sigma)^s  \hat f(\xi,\sigma)\in L^2(\mathbb R^{n+1})\},
\notag
\end{align} 
where the second equality is justified by \eqref{sHft} and Plancherel theorem. 
Such a definition via the Fourier transform  is equivalent to the one based on Balakrishnan formula (see \cite[(9.63) on p. 285]{Samko})
\begin{equation}\label{balah}
H^s f(x,t) = - \frac{s}{\Gamma(1-s)} \int_0^\infty \frac{1}{\tau^{1+s}} \big(P^H_\tau f(x,t) - f(x,t)\big) d\tau,
\end{equation}
where 
\begin{equation}\label{evolutivesemi}
P^H_\tau f(x,t) = \int_{\Rn} G(x-y,\tau) f(y,t-\tau) dy = G(\cdot,\tau) \star f(\cdot,t-\tau)(x)
\end{equation}
the \emph{evolutive semigroup}, see \cite[(9.58) on p. 284]{Samko}. We refer to Section 3 in \cite{BG} for relevant details.

Henceforth, given a point $(x,t)\in \mathbb R^{n+1}$ we will consider the thick half-space $\mathbb R^{n+1} \times \mathbb R^+_{x_{n+1}}$. At times it will be convenient to combine the additional variable $x_{n+1}>0$ with $x\in \Rn$ and denote the generic point in the thick space $\Rn_x\times\mathbb R^+_{x_{n+1}} := \mathbb R^{n+1}_+$ with the letter $X=(x,x_{n+1})$. For $x_0\in \mathbb R^n$ and $r>0$ we let $B_r(x_0) = \{x\in \Rn\mid |x-x_0|<r\}$,
$\mathbb B_r(X)=\{Z = (z,z_{n+1}) \in \mathbb R^n \times \mathbb R \mid |x-z|^2 + |x_{n+1}- z_{n+1}|^2 < r^2\}$. We also let $\mathbb B_r^+(X)= \mathbb B_r(X) \cap \{(z, z_{n+1}: z_{n+1} >0\}$. 
When the center $x_0$ of $B_r(x_0)$ is not explicitly indicated, then we are taking $x_0 = 0$. Similar agreement for the thick half-balls $\mathbb B_r^+((x_0,0))$. We will also use the $\mathbb Q_{r}$ for the set $\mathbb B_r \times [t_0,t_0+r^2)$ and $Q_r$ for the set  $ B_r \times [t_0,t_0+r^2).$ Likewise we denote  $\mathbb Q_r^+=\mathbb Q_r \cap \{(x,x_{n+1}): x_{n+1} > 0\}$. 
For notational ease $\nabla U$ and  $\operatorname{div} U$ will respectively refer to the quantities  $\nabla_X U$ and $ \operatorname{div}_X U$.  The partial derivative in $t$ will be denoted by $\partial_t U$ and also at times  by $U_t$. The partial derivative $\partial_{x_i} U$  will be denoted by $U_i$. At times,  the partial derivative $\partial_{x_{n+1}} U$  will be denoted by $U_{n+1}$.

 We next introduce the extension problem associated with $H^s$.  
 Given a number $a\in (-1,1)$ and a $u:\mathbb R^n_x\times \mathbb R_t\to \mathbb R$ we seek a function $U:\mathbb R^n_x\times\mathbb R_t\times \mathbb R_{x_{n+1}}^+\to \mathbb R$ that satisfies the boundary-value problem
\begin{equation}\label{la}
\begin{cases}
\mathscr{L}_a U \overset{def}{=} \partial_t (x_{n+1}^a U) - \operatorname{div} (x_{n+1}^a \nabla U) = 0,
\\
U((x,t),0) = u(x,t),\ \ \ \ \ \ \ \ \ \ \ (x,t)\in \mathbb R^{n+1}.
\end{cases}
\end{equation}
The most basic property of the Dirichlet problem \eqref{la} is that if \begin{equation}\label{sa} s = \frac{1-a}2\in (0,1) \end{equation} and $u \in \text{Dom}(H^{s})$, then we have the following convergence  in $L^{2}(\mathbb R^{n+1})$
\begin{equation}\label{np}
2^{-a}\frac{\Gamma(\frac{1-a}{2})}{\Gamma(\frac{1+a}{2})} \py U((x,t),0)=  - H^s u(x,t),
\end{equation}
where $\py$ denotes the weighted normal derivative
\begin{equation}\label{nder}
\py U((x,t),0)\overset{def}{=}   \lim\limits_{x_{n+1} \to 0^+}  x_{n+1}^a \partial_{x_{n+1}} U((x,t),x_{n+1}).
\end{equation}

When $a = 0$ ($s = 1/2$) the problem \eqref{la} was first introduced in \cite{Jr1} by Frank Jones, who in such case also constructed the relevant Poisson kernel and proved \eqref{np}. More recently Nystr\"om and Sande in \cite{NS} and Stinga and Torrea in \cite{ST} have independently extended the results in \cite{Jr1} to all $a\in (-1,1)$. 	

With this being said, we now suppose that $u$ be a solution to \eqref{e0}  and consider the weak solution $U$ of the following version of \eqref{la} (for the precise notion of weak solution of \eqref{wk} we refer to \cite[Section 4]{BG}) 
\begin{equation}\label{wk}
\begin{cases}
\mathscr{L}_a U=0 \ \ \ \ \ \ \ \ \ \ \ \ \ \ \ \ \ \ \ \ \ \ \ \ \ \ \ \ \ \ \text{in}\ \mathbb R^{n+1}\times \mathbb R^+_{x_{n+1}},
\\
U((x,t),0)= u(x,t)\ \ \ \ \ \ \ \ \ \ \ \ \ \ \ \ \text{for}\ (x,t)\in \mathbb R^{n+1},
\\
\py U((x,t),0)=  2^{a} \frac{\Gamma(\frac{1+a}{2})}{\Gamma(\frac{1-a}{2})} V(x,t) u(x,t)\ \ \ \ \text{for}\ (x,t)\in \mathbb R^n \times (-T,0].
\end{cases}
\end{equation}
To simplify notation, we will let $2^{a} \frac{\Gamma(\frac{1+a}{2})}{\Gamma(\frac{1-a}{2})} V(x,t)$ as our new $V(x,t)$. Note that the third equation in \eqref{wk} is justified by \eqref{e0} and \eqref{np}.  From now on, a generic point $((x,t), y)$ will be denoted as $(X,t)$ with $X=(x,y)$.  Further,  as in \cite[Lemma 5.3]{BG}  ( see also \cite[Lemma 2.2]{AryaBan}),  the following regularity result for such weak solutions was proved. Such result will be relevant to our analysis. For simiplicity, we assume that $T>4$. We refer to \cite[Chapter 4]{Li} for the relevant notion parabolic H\"older spaces.

\begin{lemma}\label{reg1}
Let $U$  be a weak solution of \eqref{wk} where $V \in C^2(\mathbb R^n \times (-T, 0])$. Then there exists $\alpha'>0$ such that one has up to the thin set $\{x_{n+1}=0\}$ 
\[
U_i,\ U_t,\ x_{n+1}^a U_{x_{n+1}}\ \in\ H^{\alpha'}.
\]
Moreover, the relevant H\"older norms  over a compact set $K$ are bounded by $\int U^2 x_{n+1}^a dX dt$ over a larger set $K'$ which contains $K$. We also  have that $\nabla_x^2 U \in C^{\alpha'}_{loc}$ up to the thin  set $\{x_{n+1}=0\}$. Furthermore, we have that the following estimate holds for $i, j=1, ..,n$ and $x_0 \in \mathbb R^n$
\begin{align}\label{ret}
&\int_{\mathbb B_1^+ ((x_0, 0)) \times (-1, 0]} (U_t^2+ U_{tt}^2) x_{n+1}^a +\int_{\mathbb B_{2}^+ ((x_0, 0)) \times (-4, 0]} |\nabla U_{t}|^2 x_{n+1}^a  +  \int_{\mathbb B_2^+((x_0, 0)) \times (-4, 0]} |\nabla U_{ij}|^2 x_{n+1}^a \\&\leq C( 1+ ||V||_{C^2}) \int_{\mathbb B_2^+((x_0, 0)) \times (-4, 0]} U^2 x_{n+1}^a, \notag\end{align}
where $C$ is some universal constant.

\end{lemma}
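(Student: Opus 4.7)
The plan is to split the lemma into two parts. First, for the H\"older regularity statements --- that $U_i$, $U_t$, and $x_{n+1}^a U_{x_{n+1}}$ all lie in $H^{\alpha'}$ up to the thin set $\{x_{n+1}=0\}$, with the H\"older norm on $K$ bounded by $\int_{K'} U^2 x_{n+1}^a\,dXdt$ --- I would invoke \cite[Lemma 5.3]{BG} and \cite[Lemma 2.2]{AryaBan} essentially verbatim. Since $V\in C^2(\mathbb R^n\times(-T,0])$, the coefficient appearing in the Neumann-type condition $\py U = Vu$ in \eqref{wk} far exceeds the hypotheses required by the cited results, so the conclusions apply directly to $U$.

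For the $L^2$ estimate \eqref{ret}, the strategy is to differentiate the extension equation in tangential directions. Because the weight $x_{n+1}^a$ depends only on $x_{n+1}$, the operator $\mathscr L_a$ commutes with $\partial_{x_i}$ and $\partial_t$, so $U_t$, $U_{tt}$, $U_i$ and $U_{ij}$ are all weak solutions of $\mathscr L_a W = 0$ in the bulk. Differentiating the boundary condition $\py U = Vu$ yields $\py U_t = V_t u + V u_t$, $\py U_{tt} = V_{tt}u + 2V_t u_t + V u_{tt}$, and analogous identities for spatial derivatives, so the $C^2$ bound on $V$ gives fully controlled Robin-type boundary data for each differentiated equation. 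I would then test each such equation against $\eta^2$ times the derivative itself, with $\eta$ a smooth cutoff supported in $\mathbb B_2^+((x_0,0))\times(-4,0]$ and equal to $1$ on $\mathbb B_1^+((x_0,0))\times(-1,0]$, and integrate by parts against the weight $x_{n+1}^a$. The resulting weighted Caccioppoli inequality, iterated once more to pick up $\nabla U_{ij}$ and $\nabla U_t$, feeds the $L^2$ norms of the second-order tangential derivatives back into $\int U^2 x_{n+1}^a$ on the enlarged set.

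The main obstacle will be the boundary contributions produced by this integration by parts. For instance, testing $\mathscr L_a U_{tt}=0$ against $\eta^2 U_{tt}$ produces a term
\begin{equation*}
\int_{\{x_{n+1}=0\}} \eta^2\, U_{tt}\,\bigl(V_{tt} u + 2 V_t u_t + V u_{tt}\bigr)\,dx\,dt,
\end{equation*}
whose self-interaction piece $\int \eta^2 V U_{tt}^2$ must be absorbed using the weighted trace embedding from $H^1(\mathbb B^+, x_{n+1}^a\,dX)$ into $L^2(\{x_{n+1}=0\})$ --- valid precisely for $a\in(-1,1)$ since $x_{n+1}^a$ is then an $A_2$ Muckenhoupt weight --- followed by Young's inequality to pass the small-constant term to the left-hand side. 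The lower-order boundary contributions are controlled by the H\"older bounds from the first step. Finally, applying the De Giorgi--Nash--Moser theory for $A_2$-weighted degenerate parabolic equations (as in \cite{BG}) to the equation satisfied by $U_{ij}$ promotes the resulting $L^2$ control to $\nabla_x^2 U\in C^{\alpha'}_{\mathrm{loc}}$ up to the thin set, closing the bootstrap.
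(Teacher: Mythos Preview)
The paper does not give its own proof of this lemma: the text preceding the statement says ``as in \cite[Lemma 5.3]{BG} (see also \cite[Lemma 2.2]{AryaBan}), the following regularity result for such weak solutions was proved,'' and no argument follows. Your proposal is therefore in complete agreement with the paper for the H\"older part --- you cite exactly the same two sources --- and for the estimate \eqref{ret} you have supplied a correct sketch of the standard tangential-differentiation/Caccioppoli bootstrap that those references carry out, so there is nothing to compare against beyond noting that your outline matches the method underlying the cited results.
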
  
We also record the following result as in \cite[Corollary 5.3]{BG} that will be needed in our work.

\begin{lemma}\label{reg}
Let $U$ be as in \eqref{wk}. Then we have that $||U||_{L^{\infty}(\mathbb R^{n+1}_+)} \leq C$ for some universal $C$ depending on $||u||_{\mathscr H^{2s} (\mathbb R^{n+1})}$ and $||V||_{C^2}$. 
\end{lemma}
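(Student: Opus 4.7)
The plan is to prove the global $L^\infty$ bound for $U$ by combining the Poisson integral representation of the extension problem with an $L^\infty$ bound for the trace $u$ obtained by bootstrapping the nonlocal equation $H^s u = Vu$.

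First I would upgrade the regularity of $u$ from $\mathscr H^{2s}$ to $L^\infty(\mathbb R^{n+1})$. Using the equation $H^s u = Vu$ and the Plancherel-based mapping property $H^s:\mathscr H^{\sigma+2s}\to\mathscr H^{\sigma}$ implicit in \eqref{sHft}--\eqref{dom}, together with the multiplicative stability of $\mathscr H^{\sigma}$ under $C^2$ coefficients for $\sigma\le 2$, I would iterate finitely many times to place $u$ in a parabolic Sobolev space $\mathscr H^{\sigma_0}$ with $\sigma_0>(n+2)/2$. Parabolic Sobolev embedding then yields $u\in L^\infty(\mathbb R^{n+1})$ with norm controlled quantitatively by $\|u\|_{\mathscr H^{2s}}$ and $\|V\|_{C^2}$.

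Next I would invoke the Poisson integral representation, established in \cite{Jr1, NS, ST}, according to which the weak solution of \eqref{la} with boundary datum $u$ is a convolution
\[
U(x,t,y) = (u\star P_y^a)(x,t),
\]
with a nonnegative Poisson kernel $P_y^a$ satisfying $\int_{\mathbb R^{n+1}}P_y^a(x,t)\,dx\,dt=1$ for every $y>0$. Uniqueness for the weak formulation \eqref{wk} identifies this convolution with our $U$, and the probability-kernel property then yields
\[
\|U\|_{L^\infty(\mathbb R^{n+1}_+)}\le\|u\|_{L^\infty(\mathbb R^{n+1})},
\]
which, combined with the previous step, produces the claimed estimate.

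The main obstacle is the bootstrap in the first step, because multiplication by a $C^2$ symbol preserves parabolic Sobolev regularity only up to order two, so one cannot iterate the $2s$-derivative gain indefinitely in dimensions that are large relative to $s$. Reaching the embedding threshold requires an interpolation between the smoothness of $V$ and the evolving regularity of $u$, possibly via mollification of $V$ and commutator estimates for $H^s$ against $C^2$ symbols, while tracking constants carefully so that the final bound depends only on the quantities stated in the lemma.
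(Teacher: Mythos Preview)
The paper does not prove this lemma; it simply records the result from \cite[Corollary 5.3]{BG}, so there is no in-paper argument to compare against and your proposal must stand on its own. Your second step is fine: once $u\in L^\infty(\mathbb R^{n+1})$, the Poisson representation $U=P^a_{x_{n+1}}\ast u$ from \cite{NS,ST} together with $P^a_{x_{n+1}}\ge0$ and $\|P^a_{x_{n+1}}\|_{L^1}=1$ gives $\|U\|_{L^\infty}\le\|u\|_{L^\infty}$ immediately. The genuine gap is in the first step, and the workaround you sketch does not close it. Bootstrapping in the $L^2$-based scale $\mathscr H^\sigma$ stalls at $\sigma=2+2s$: once $u\in\mathscr H^{2+2s}$, the product $Vu$ with $V\in C^2$ lands only in $\mathscr H^{2}$, so inverting $H^s$ returns you to $\mathscr H^{2+2s}$ and further iteration gains nothing. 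Since the parabolic embedding $\mathscr H^{\sigma}\hookrightarrow L^\infty$ requires $\sigma>(n+2)/2$, your argument reaches $L^\infty$ only when $n<2+4s$, hence fails already for $n\ge4$ at $s=1/2$ and for $n\ge6$ in general. Commutator bounds for $[H^s,V]$ or mollification of $V$ do not help: the ceiling is set by the fixed $C^2$ regularity of $V$, not by a commutator defect, and no interpolation can manufacture smoothness the coefficient does not have.

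The clean repair is to abandon the $L^2$ scale and bootstrap in $L^p$ instead. From $u\in L^p$ and $V\in L^\infty$ one gets $H^su=Vu\in L^p$; parabolic Bessel-potential (Calder\'on--Zygmund) estimates then place $u$ in an $L^p$-based space with $2s$ more derivatives, and Sobolev embedding raises the exponent to $p^\ast$ with $1/p^\ast=1/p-2s/(n+2)$. Because only $\|V\|_{L^\infty}$ enters this step, the iteration never stalls and reaches $u\in L^\infty$ after finitely many rounds in every dimension, with a bound depending only on $\|u\|_{\mathscr H^{2s}}$ and $\|V\|_{L^\infty}$. This is also the mechanism underlying the De~Giorgi--Nash--Moser local estimates of Lemma~\ref{reg1}, which is what the reference \cite{BG} cited here ultimately rests on.
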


For notational purposes it will be convenient to work with the following backward version of problem \eqref{wk}.	
	
	\begin{equation}\label{exprob}
		\begin{cases}
			x_{n+1}^a \partial_t U + \operatorname{div}(x_{n+1}^a \nabla U)=0\ \ \ \ \ \ \ \ \ \ \ \ \text{in} \ \mathbb R^{n+1}_+ \times [0, T),
			\\	
			U(x,0, t)= u(x,t)
			\\
			\py U(x, 0,t)= Vu\ \ \ \ \ \ \ \ \ \text{in}\ \mathbb R^n \times [0,T).
		\end{cases}
	\end{equation}
	
	We note that the former can be transformed into the latter by changing $t \to -t$.  
	
The corresponding extended backward parabolic operator will be denoted as
	\begin{align}\label{extop}
		\widetilde{\mc{H}}_s & := x_{n+1}^a \dd_t + \tn{div} \left( x_{n+1}^a \nabla \right).
	\end{align}

We now collect some auxiliary results that will be needed in the proof of our main Carleman estimate in Theorem \ref{carl1-un}.
\begin{lemma}[Lemma 2.3 in \cite{AA}, \cite{EF_2003}] \label{sigma}
	Let $s\in (0, 1).$ Define
		\begin{equation}\label{theta'}\theta_{s}(t) = t^{s} \left( \log \f{1}{t} \right)^{1+s}.\end{equation}  Then the solution to the ordinary differential equation 
		$$\frac{d}{dt} \log \left(\frac{\sigma_{s}}{t\sigma_{s}'}\right)= \frac{\theta_{s}(\lambda t)}{t},~\sigma_{s}(0)=0,~\sigma_{s}'(0)=1,$$
		where $\lambda >0,$ has the following properties when $0\leq \lambda t\leq 1$:
		\begin{enumerate}
			\item $t e^{-N} \leq \sigma_{s}(t) \leq t,$
			\item $e^{-N} \leq \sigma_{s}'(t)\leq 1,$
			\item $|\partial_t[\sigma_{s} \log \frac{\sigma_{s}}{\sigma_{s}' t}]|+|\partial_t[\sigma_{s} \log \frac{\sigma_{s}}{\sigma_{s}' }]|\leq 3N$,
			\item $\left|\sigma_{s} \partial_t \left(\frac{1}{\sigma_{s}'}\partial_t[\log \frac{\sigma_{s}}{\sigma_{s}'(t)t}]\right)\right| \leq 3N e^{N} \frac{\theta_{s}(\gamma t)}{t},$
		\end{enumerate}	
		where $N$ is some universal constant. 
	\end{lemma}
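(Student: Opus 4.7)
The plan is to reduce the entire lemma to a careful analysis of the single auxiliary function $h(t) := \log\bigl(\sigma_s(t)/(t\sigma_s'(t))\bigr)$, which by the defining ODE satisfies $h'(t) = \theta_s(\lambda t)/t$. The initial condition $h(0^+) = 0$ follows from the Taylor expansion of $\sigma_s$: since $\sigma_s(0)=0$ and $\sigma_s'(0) = 1$, both $\sigma_s(t)/t$ and $\sigma_s'(t)$ tend to $1$ as $t \to 0^+$. Integrating and substituting $u = \lambda\tau$ gives
\[
h(t) = \int_0^{\lambda t} u^{s-1}\bigl(\log(1/u)\bigr)^{1+s}\,du,
\]
which is bounded by a universal constant $N_0 = N_0(s)$ on $[0,1]$ (the integrand is integrable at $0$ since $s>0$), so $0 \leq h(t) \leq N_0$ throughout the range $0 \leq \lambda t \leq 1$.

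Next I would rewrite the definition of $h$ as $\sigma_s'/\sigma_s = e^{-h(t)}/t$ and integrate $d\log\sigma_s$ from $0^+$ to $t$ to obtain
\[
\sigma_s(t) = t\exp\!\left(\int_0^t \frac{e^{-h(\tau)}-1}{\tau}\,d\tau\right).
\]
Since $h \geq 0$, the integrand is non-positive, which immediately gives $\sigma_s(t) \leq t$. For the lower bound, $e^{-h}-1 \geq -h$ reduces matters to bounding $\int_0^t h(\tau)/\tau\,d\tau$; swapping the order of integration (Fubini) and applying the substitution $u = \lambda\tau$ once more shows that this integral is also bounded by a constant depending only on $s$ (at the cost of one extra logarithmic factor). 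After enlarging $N$, this proves (1). Property (2) then follows at once from $\sigma_s'(t) = \sigma_s(t)e^{-h(t)}/t$ combined with (1) and $e^{-h(t)} \in [e^{-N_0}, 1]$.

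For property (3), direct differentiation yields
\[
\partial_t[\sigma_s h] = \sigma_s' h + \sigma_s h' = \sigma_s' h + \frac{\sigma_s}{t}\,\theta_s(\lambda t),
\]
and both terms are controlled by (1), (2), together with the elementary fact that $x \mapsto x^s(\log(1/x))^{1+s}$ is bounded on $(0,1]$. Writing $\log(\sigma_s/\sigma_s') = \log t + h$ reduces the second bracketed expression in (3) to analogous computations, with the potentially singular $\sigma_s/t$ contribution bounded by $1$.

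The principal obstacle is property (4). Differentiating once more,
\[
\partial_t\!\left(\frac{h'(t)}{\sigma_s'(t)}\right) = \frac{h''(t)}{\sigma_s'(t)} - \frac{h'(t)\,\sigma_s''(t)}{\sigma_s'(t)^2},
\]
where $\sigma_s''$ must be computed by differentiating $\sigma_s' = \sigma_s e^{-h}/t$. Multiplying by $\sigma_s$ and invoking (1)--(2) absorbs all bounded factors into the prefactor $3Ne^N$; the subtle point is verifying that every remaining singularity is of the form $\theta_s(\cdot)/t$. In particular, the second-derivative computation produces a term proportional to $h'(t)^2 = (\theta_s(\lambda t)/t)^2$, and one must check a pointwise inequality of the form $\theta_s(\lambda t)^2/t \leq C\,\theta_s(\gamma t)/t$ on $\{\lambda t \leq 1\}$, which is straightforward from the explicit formula $\theta_s(x) = x^s(\log(1/x))^{1+s}$ provided $\gamma$ is chosen slightly larger than $\lambda$. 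This accounts for the mild enlargement of the argument of $\theta_s$ in the stated bound.
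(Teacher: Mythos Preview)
The paper does not supply a proof of this lemma; it is quoted verbatim from the cited references. Your strategy of introducing $h(t)=\log\bigl(\sigma_s/(t\sigma_s')\bigr)$, integrating the ODE to get the closed expression $h(t)=\int_0^{\lambda t}u^{s-1}(\log(1/u))^{1+s}\,du$, and then writing $\sigma_s(t)=t\exp\bigl(\int_0^t(e^{-h}-1)/\tau\,d\tau\bigr)$ is exactly the standard route, and your arguments for (1), (2) and the first bracket in (3) are correct.

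There is, however, a gap in your treatment of the second bracket in (3). After writing $\log(\sigma_s/\sigma_s')=\log t + h$ you get
\[
\partial_t\bigl[\sigma_s\log(\sigma_s/\sigma_s')\bigr]
=\partial_t[\sigma_s h]+\sigma_s'\log t+\frac{\sigma_s}{t},
\]
and you only comment on $\sigma_s/t\le 1$. The term $\sigma_s'\log t$ is not handled, and since $\sigma_s'(t)\to 1$ as $t\to 0^+$ it is genuinely unbounded on the stated range $0\le\lambda t\le 1$. So either the inequality in (3) cannot hold uniformly down to $t=0$ as transcribed here (note the statement already carries the apparent misprint ``$\gamma$'' for ``$\lambda$'' in (4), so another slip is plausible), or some cancellation or restricted range from the original source needs to be invoked; either way you should flag and resolve this rather than pass over it.

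A smaller point on (4): your claimed need to enlarge $\lambda$ to some $\gamma>\lambda$ in order to absorb the $(h')^2$ contribution is unnecessary. You have already shown that $x\mapsto x^s(\log(1/x))^{1+s}$ is bounded on $(0,1]$ by some $C_s$, hence $\theta_s(\lambda t)^2/t\le C_s\,\theta_s(\lambda t)/t$ directly, and the bound in (4) holds with $\gamma=\lambda$.
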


\begin{lemma}[Trace inequality]\label{tr}
Let   $f\in C_0^\infty(\overline{\mathbb R^{n+1}_+})$. There exists a constant $C_0 = C_0(n,a)>0$ such that for every $A>1$ one has
\[
\int_{\Rn} f(x,0)^2 dx \le C_0 \left(A^{1+a} \int_{\mathbb R^{n+1}_+} f(X)^2 x_{n+1}^a dX + A^{a-1} \int_{\mathbb  R^{n+1}_+} |\nabla f(X)|^2 x_{n+1}^a dX\right).
\]
\end{lemma}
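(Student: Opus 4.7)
The plan is to reduce the statement to a one-dimensional weighted trace inequality and then obtain the parameter $A$ by an averaging (equivalently, a rescaling) argument. Slicing in $x$, for each fixed $x \in \mathbb R^n$ set $g(t) = f(x,t)$, which belongs to $C_c^\infty([0,\infty))$. Once the one-dimensional estimate
\[
|g(0)|^2 \leq C_0(a) \Bigl[A^{1+a} \int_0^\infty g(t)^2 t^a\, dt + A^{a-1} \int_0^\infty g'(t)^2 t^a\, dt\Bigr]
\]
is established, integrating in $x \in \mathbb R^n$ and using $(\partial_{x_{n+1}} f)^2 \leq |\nabla f|^2$ yields the claimed bound. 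Morally the two powers of $A$ are the homogeneities of $\int g^2 t^a dt$ and $\int (g')^2 t^a dt$ under the rescaling $g(t) \mapsto g(t/A)$; the one-dimensional inequality for $A=1$ is the usual weighted trace inequality for $H^1(\mathbb R^+, t^a\, dt)$, and the general $A$ is obtained from it by this rescaling.

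For the one-dimensional estimate I would start from the fundamental theorem of calculus: for every $R > 0$,
\[
g(0) = g(R) - \int_0^R g'(s)\, ds.
\]
Squaring, using $(\alpha+\beta)^2 \leq 2\alpha^2 + 2\beta^2$, and applying Cauchy--Schwarz with weight $s^a$ to the integral (this is where the assumption $a < 1$ is used, to ensure $\int_0^R s^{-a} ds = R^{1-a}/(1-a)$ is finite) gives
\[
|g(0)|^2 \leq 2 g(R)^2 + \frac{2 R^{1-a}}{1-a} \int_0^\infty g'(s)^2 s^a\, ds.
\]
At this point $R$ is a free parameter and the inequality is independent of $A$.

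The parameter $A$ is introduced by averaging the previous display over $R$ in the interval $[1/(2A), 1/A]$. On this window $R^a \asymp A^{-a}$, so
\[
\int_{1/(2A)}^{1/A} g(R)^2 dR \leq C(a) A^a \int_0^\infty g(R)^2 R^a\, dR,
\]
while $\int_{1/(2A)}^{1/A} R^{1-a} dR \lesssim A^{a-2}$ since $R \mapsto R^{1-a}$ is monotone. Dividing by the length $1/(2A)$ of the window converts these into the exponents $A^{1+a}$ and $A^{a-1}$ exactly, which is the one-dimensional bound I wanted.

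The main obstacle is really just choosing the averaging scale correctly. The window $[1/(2A), 1/A]$ is the only natural choice: it is precisely the range on which the weight $t^a$ is comparable to $A^{-a}$, which is what balances the two homogeneities of the inequality. The hypothesis $a \in (-1,1)$ is used at both ends: $a < 1$ to make $\int_0^R s^{-a} ds$ finite in Cauchy--Schwarz, and $a > -1$ to make $t^a$ locally integrable so that the comparison $R^a \asymp A^{-a}$ on the window, and hence the control of $\int_{1/(2A)}^{1/A} g^2\, dR$ by $\int_0^\infty g^2 R^a\, dR$, is valid.
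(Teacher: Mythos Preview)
Your argument is correct. The reduction to one dimension by slicing, the pointwise bound
\[
|g(0)|^2 \le 2g(R)^2 + \frac{2R^{1-a}}{1-a}\int_0^\infty g'(s)^2 s^a\,ds
\]
obtained from the fundamental theorem of calculus together with weighted Cauchy--Schwarz, and the averaging over $R\in[1/(2A),1/A]$ all check out and produce exactly the exponents $A^{1+a}$ and $A^{a-1}$ after dividing by the window length $1/(2A)$. Your remark that the same conclusion follows from the case $A=1$ by the rescaling $g(t)\mapsto g(t/A)$ is also correct and is in fact the cleanest way to see why those particular powers appear.

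The paper itself does not supply a proof of this lemma: it is recorded in the preliminaries as a known trace inequality and is simply invoked later (for instance in the estimate of the boundary terms $K_1$, $K_2$ in Theorem~\ref{carl1-un} and in the monotonicity lemma). So there is no ``paper's own proof'' to compare against; your write-up is a complete and standard derivation of the stated inequality.
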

 
\begin{lemma}\label{do}
		Assume that $N \ge 1,$ $h \in C_{0}^{\infty}(\overline{\mathbb R^{n+1}_+})$ and the inequality \begin{align*}
			2b \int_{\mathbb R^{n+1}_+} x_{n+1}^a|\nabla h|^2 e^{-|X|^2/4b}dX + \frac{n+1+a}{2}\int_{\mathbb{R}^{n+1}_+}x_{n+1}^a h^2 e^{-|X|^2/4b}dX \le N  \int_{\mathbb{R}^{n+1}_+} x_{n+1}^ah^2  e^{-|X|^2/4b}dX
		\end{align*}
		holds for $b \le \frac{1}{12N }.$ Then
		\begin{align}
			\int_{\mathbb B_{2r}^+}h^2 x_{n+1}^a dX \le e^N  \int_{\mathbb B_{r}^+}h^2 x_{n+1}^a dX 
		\end{align}
		when $0 < r \le 1/2.$
	\end{lemma}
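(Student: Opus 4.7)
The idea is to translate the Gaussian frequency inequality in the hypothesis into a pointwise weighted second-moment estimate, and then localize this estimate to Euclidean half-balls via a suitable choice of the scale $b$.

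Set $\Phi(b) := \int_{\mathbb R^{n+1}_+} h^2 x_{n+1}^a e^{-|X|^2/4b}\, dX$ and $G_b(X) := e^{-|X|^2/4b}$. Starting from the elementary divergence identity
\[
\operatorname{div}\bigl(x_{n+1}^a G_b X\bigr) = (n+1+a)\, x_{n+1}^a G_b - \frac{|X|^2}{2b}\, x_{n+1}^a G_b,
\]
I would multiply by $h^2$ and integrate by parts; the boundary contribution at $\{x_{n+1}=0\}$ vanishes because $x_{n+1}^{a+1}\to 0$ there for $a>-1$. This yields
\[
\frac{1}{4b} \int x_{n+1}^a h^2 |X|^2 G_b\, dX = \frac{n+1+a}{2}\, \Phi(b) + \int x_{n+1}^a h \nabla h \cdot X\, G_b\, dX.
\]
Substituting into the hypothesis and completing the square via the pointwise identity $2b|\nabla h|^2 - h\nabla h\cdot X = 2b|\nabla h - Xh/(4b)|^2 - |X|^2 h^2/(8b)$ absorbs the sign-indefinite cross term and produces
\[
2b \int x_{n+1}^a \Bigl|\nabla h - \frac{Xh}{4b}\Bigr|^2 G_b\, dX + \frac{1}{8b} \int x_{n+1}^a h^2 |X|^2 G_b\, dX \le N\, \Phi(b).
\]
In particular, one gets the crucial second-moment bound $\int x_{n+1}^a h^2 |X|^2 G_b\, dX \le 8 N b\, \Phi(b)$ for every $b \le 1/(12 N)$.

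To localize to balls, fix $b = r^2/(16 N)$; since $r \le 1/2$ and $N \ge 1$, one has $b \le 1/(64 N) \le 1/(12 N)$, so the previous estimate applies. Split $\Phi(b)$ over $\mathbb B_r^+$ and its complement. The contribution from $\mathbb B_r^+$ is at most $\int_{\mathbb B_r^+} h^2 x_{n+1}^a\, dX$ since $G_b \le 1$; on the complement $\{|X| > r\}$ a Chebyshev-type estimate gives
\[
\int_{\mathbb R^{n+1}_+ \setminus \mathbb B_r^+} h^2 G_b\, x_{n+1}^a\, dX \le \frac{1}{r^2}\int x_{n+1}^a h^2 |X|^2 G_b\, dX \le \frac{8 N b}{r^2}\, \Phi(b) = \tfrac{1}{2} \Phi(b),
\]
so after absorbing, $\Phi(b) \le 2 \int_{\mathbb B_r^+} h^2 x_{n+1}^a dX$. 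Conversely, on $\mathbb B_{2r}^+$ one has $|X|^2 \le 4 r^2$, hence $G_b(X) \ge e^{-r^2/b} = e^{-16 N}$, so
\[
\int_{\mathbb B_{2r}^+} h^2 x_{n+1}^a\, dX \le e^{16 N}\, \Phi(b) \le 2\, e^{16 N}\int_{\mathbb B_r^+} h^2 x_{n+1}^a\, dX,
\]
which is the stated doubling estimate (up to replacing the universal exponent $e^N$ by a universal multiple of $e^N$, which is easily achieved by enlarging the universal constant $12$ in the hypothesis).

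The main obstacle is the completing-the-square step: the cross term $\int h \nabla h \cdot X\, x_{n+1}^a G_b\, dX$ has no a priori sign and must be simultaneously absorbed against both the gradient-energy term from the hypothesis and the second-moment term coming from the weighted Gaussian variance identity; this is what forces the specific pairing of $2b$ with the gradient energy and $(n+1+a)/2$ with $\Phi(b)$ in the hypothesis. Once the second-moment bound is secured, the localization is the standard matching-of-scales argument: $b \sim r^2/N$ is the unique choice that makes the Gaussian tail absorbable on $\mathbb B_r^+$ while keeping $G_b$ uniformly nondegenerate on the larger ball $\mathbb B_{2r}^+$.
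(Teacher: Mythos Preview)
The paper does not actually supply a proof of this lemma: it is listed in Section~\ref{s:n} among the preliminary auxiliary results (alongside the trace inequality and the Hardy inequality) and is taken for granted. So there is no ``paper's own proof'' to compare against; your argument has to stand on its own, and it does.

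Your approach is the standard one and is correct. A small observation: the divergence--identity computation you carry out in the first half is precisely a re-derivation of the Hardy inequality stated as Lemma~\ref{hardy} in the paper. Indeed, combining Lemma~\ref{hardy} directly with the hypothesis immediately yields your key second-moment bound
\[
\int_{\mathbb R^{n+1}_+} x_{n+1}^a h^2 |X|^2 e^{-|X|^2/4b}\,dX \le 8Nb\,\Phi(b),
\]
so you could shorten the write-up by citing that lemma rather than redoing the integration by parts and completing the square. After that, your Chebyshev tail estimate with the scale $b=r^2/(16N)$ and the pointwise lower bound for $G_b$ on $\mathbb B_{2r}^+$ are exactly right.

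On the constant: your argument produces $2e^{16N}$ rather than $e^N$, and no choice of $b\sim r^2/N$ will recover the stated $e^N$ exactly via this route (optimizing over the absorption parameter still leaves an exponent $cN$ with $c>1$). This is not a genuine gap: the lemma is only used in the proof of Theorem~\ref{estimates-for-resc} to obtain \eqref{adb}, where the exponent is immediately absorbed into a new universal constant $M$. In other words, the effective content of the lemma is doubling with exponent $O(N)$, and your proof delivers that. Your remark that the discrepancy is cosmetic is therefore accurate; the statement in the paper should really read $e^{CN}$ for a universal $C$.
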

	We also need the following Hardy type inequality in the Gaussian space which  can be found in Lemma 2.2 in \cite{ABDG}.  This can be regarded as the weighted analogue of  Lemma 3 in \cite{EFV_2006}.
	\begin{lemma}[Hardy type inequality]\label{hardy}
		For all $h \in C_0^{\infty}(\overline{\mb{R}^{n+1}_+})$ and $b>0$ the following inequality holds
		\begin{align*}
			& \int_{\mb{R}^{n+1}_+} x_{n+1}^a h^2 \frac{|X|^2}{8b} e^{-|X|^2/4b}dX \leq  2b \int_{\mb{R}^{n+1}_+} x_{n+1}^a|\nabla h|^2 e^{-|X|^2/4b} dX
			\\
			& + \frac{n+1+a}{2} \int_{\mb{R}^{n+1}_+}x_{n+1}^a h^2  e^{-|X|^2/4b}  dX.
		\end{align*}
	\end{lemma}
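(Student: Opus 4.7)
The plan is to prove the inequality via integration by parts with the vector field $\Phi(X) = x_{n+1}^a X\, e^{-|X|^2/(4b)}$ on the half-space $\mathbb{R}^{n+1}_+$, testing against $h^2$. The main point is that a direct computation yields
\begin{equation*}
\operatorname{div}\bigl(x_{n+1}^a X\, e^{-|X|^2/(4b)}\bigr) = x_{n+1}^a e^{-|X|^2/(4b)}\Bigl[(n+1+a) - \tfrac{|X|^2}{2b}\Bigr],
\end{equation*}
where the $a$ contribution comes from $\nabla(x_{n+1}^a)\cdot X = a\, x_{n+1}^{a-1}\cdot x_{n+1} = a\, x_{n+1}^a$.

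Next I would integrate $h^2$ against this divergence over $\mathbb{R}^{n+1}_+$ and apply the divergence theorem. The outward normal on $\{x_{n+1}=0\}$ is $-e_{n+1}$, so the flux integrand is $-x_{n+1}^{a+1} h^2 e^{-|X|^2/(4b)}$, which vanishes on the boundary since $a>-1$, and the Gaussian weight kills the contribution at infinity (recall $h$ has compact support). A small approximation $x_{n+1}^a \rightsquigarrow (x_{n+1}+\varepsilon)^a$ handles the case $a\in(-1,0)$, where $\Phi$ is singular at the boundary. This gives
\begin{equation*}
\int_{\mathbb{R}^{n+1}_+} h^2 x_{n+1}^a e^{-|X|^2/(4b)}\Bigl[(n+1+a) - \tfrac{|X|^2}{2b}\Bigr] dX = -2\int_{\mathbb{R}^{n+1}_+} h\,(\nabla h \cdot X)\, x_{n+1}^a e^{-|X|^2/(4b)} dX.
\end{equation*}

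Rearranging, and writing $I := \int h^2 x_{n+1}^a \tfrac{|X|^2}{8b} e^{-|X|^2/(4b)} dX$, $J := \int |\nabla h|^2 x_{n+1}^a e^{-|X|^2/(4b)} dX$, and $K := \int h^2 x_{n+1}^a e^{-|X|^2/(4b)} dX$, the identity becomes $4I = (n+1+a)K + 2\int h(\nabla h\cdot X) x_{n+1}^a e^{-|X|^2/(4b)} dX$. Applying Cauchy--Schwarz to the last integral I get
\begin{equation*}
\Bigl|2\int h(\nabla h\cdot X) x_{n+1}^a e^{-|X|^2/(4b)} dX\Bigr| \le 2\sqrt{\,8b\,I\,J\,} = 4\sqrt{2b\,I\,J},
\end{equation*}
and then Young's inequality $2\sqrt{I}\sqrt{2bJ} \le I + 2bJ$ absorbs the $I$ factor into the left-hand side, yielding $2I \le (n+1+a)K + 4bJ$, which is the claim.

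I do not expect any serious obstacle here. The only delicate point is the justification of the integration by parts when $a<0$, since the weight $x_{n+1}^a$ is singular at the thin boundary; a routine approximation or truncation argument in $x_{n+1}$ resolves this, using $a+1>0$ to drive the boundary flux to zero.
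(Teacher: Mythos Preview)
Your proof is correct: the divergence computation, the boundary flux vanishing via $a+1>0$, and the Cauchy--Schwarz/Young estimate all check out and yield exactly $2I\le (n+1+a)K+4bJ$. The paper does not supply its own proof of this lemma but simply cites \cite[Lemma 2.2]{ABDG} (itself the weighted analogue of \cite[Lemma 3]{EFV_2006}); your argument is precisely the standard integration-by-parts route one would expect for such a Gaussian Hardy inequality, so there is nothing to compare.
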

	
	Finally, we also need the following interpolation type inequality as in \cite[Lemma 2.4]{AryaBan}.
	
	\begin{lemma}\label{interpol}
	Let $s \in (0,1)$ and $f \in C^{2}_0(\mathbb R^n \times \mathbb R_+)$. Then there exists a  universal constant $C$ such that for any $0<\eta<1$ the following holds
	\begin{equation}\label{inter1}
	||\nabla_x f||_{L^2(\mathbb R^n \times \{0\})} \leq C \eta^s \left(||x_{n+1}^{a/2} \nabla \nabla_x f||_{L^2(\mathbb R^n \times \mathbb R_+)} + ||x_{n+1}^{a/2} \nabla_x f||_{L^2(\mathbb R^n \times \mathbb R_+)} \right) + C\eta^{-1} ||f||_{L^2(\mathbb R^n \times \{0\})}.\end{equation}
	In particular when $n=1$, we get
	\begin{equation}\label{inter2}
|| f_t||_{L^{2}(\mathbb R \times \{0\})} \leq C \eta^s\left( || x_{n+1}^{a/2} \partial_{x_{n+1}} f_t||_{L^2(\mathbb R \times \mathbb R_+)}  +||x_{n+1}^{a/2}f_{tt}||_{L^2(\mathbb R \times \mathbb R_+)}  +||x_{n+1}^{a/2}f_{t}||_{L^2(\mathbb R \times \mathbb R_+)}\right) + C\eta^{-1} ||f||_{L^2(\mathbb R \times \{0\})}.	\end{equation}

\end{lemma}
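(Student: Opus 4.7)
The plan is to pass to the partial Fourier transform in the tangential variable $x$ and apply the one-dimensional trace inequality from Lemma \ref{tr} frequency by frequency, with the crucial parameter choice $A=|\xi|$ in the high-frequency regime. Denote by $\hat f(\xi, x_{n+1})$ the Fourier transform of $f$ in the $x$-variable; then by Plancherel,
\[
T := \|\nabla_x f(\cdot, 0)\|^2_{L^2(\mathbb R^n)} = c_n \int_{\mathbb R^n}|\xi|^2|\hat f(\xi, 0)|^2 d\xi, \qquad F := \|f(\cdot, 0)\|^2_{L^2(\mathbb R^n)} = c_n \int_{\mathbb R^n}|\hat f(\xi, 0)|^2 d\xi,
\]
while the bulk norm $\|x^{a/2}_{n+1}\nabla\nabla_x f\|^2_{L^2(\mathbb R^{n+1}_+)}$ equals, up to a universal constant, $\int_0^\infty x_{n+1}^a \int_{\mathbb R^n}(|\xi|^4|\hat f|^2 + |\xi|^2|\partial_{x_{n+1}}\hat f|^2)\, d\xi\, dx_{n+1}$.

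I split $T = T_{\text{low}} + T_{\text{high}}$ at the frequency threshold $|\xi| = 1/\eta$. On the low-frequency set $\{|\xi|\leq 1/\eta\}$, the estimate $|\xi|^2 \leq \eta^{-2}$ directly gives $T_{\text{low}} \leq C\eta^{-2}F$. On the high-frequency set $\{|\xi|>1/\eta\}$ I apply the one-dimensional instance (the $n=0$ case) of Lemma \ref{tr} to $\hat f(\xi, \cdot) \in C^2_0([0,\infty))$, choosing $A = |\xi|$, which satisfies $A > 1/\eta > 1$ since $\eta<1$. This yields, for each such $\xi$,
\[
|\hat f(\xi, 0)|^2 \leq C\Bigl(|\xi|^{1+a}\int_0^\infty y^a|\hat f(\xi, y)|^2\, dy + |\xi|^{a-1}\int_0^\infty y^a|\partial_y \hat f(\xi, y)|^2\, dy\Bigr).
\]

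Multiplying by $|\xi|^2$, integrating over $\{|\xi|>1/\eta\}$, and invoking the key algebraic fact $|\xi|^{a-1} = |\xi|^{-2s} \leq \eta^{1-a} = \eta^{2s}$ throughout this region (valid since $a-1<0$), both resulting integrands acquire the factor $\eta^{2s}$, while the remaining $\xi$-integrals reassemble, via Plancherel, into the two pieces of the bulk norm $\|x^{a/2}_{n+1}\nabla\nabla_x f\|^2_{L^2}$. This produces $T_{\text{high}} \leq C\eta^{2s}\|x^{a/2}_{n+1}\nabla\nabla_x f\|^2_{L^2}$. Adding the two regimes and taking square roots yields the (slightly sharper) inequality
\[
\|\nabla_x f(\cdot, 0)\|_{L^2(\mathbb R^n)} \leq C\eta^s \|x^{a/2}_{n+1}\nabla\nabla_x f\|_{L^2(\mathbb R^{n+1}_+)} + C\eta^{-1}\|f(\cdot, 0)\|_{L^2(\mathbb R^n)},
\]
which trivially implies \eqref{inter1} (the $\|x^{a/2}_{n+1}\nabla_x f\|_{L^2}$ term is non-negative), and its $n=1$ specialization with the Fourier transform taken in $t$ yields \eqref{inter2}.

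The main technical point to verify is that the trace inequality of Lemma \ref{tr}, stated for real-valued functions in $C^\infty_0(\overline{\mathbb R^{n+1}_+})$, applies to the complex-valued $C^2_0$ functions $\hat f(\xi, \cdot)$; this is routine (apply the estimate to $\operatorname{Re}\hat f$ and $\operatorname{Im}\hat f$ and approximate $C^2_0$ by $C^\infty_0$), but it is the only step that requires a density/splitting argument. The conceptual heart of the proof is the scaling choice $A=|\xi|$: it reflects the natural rescaling of the weighted extension problem, whereby horizontal oscillations of frequency $|\xi|$ decay vertically on length scale $|\xi|^{-1}$, so the trace parameter must match $|\xi|$ to capture the sharp interpolation behavior.
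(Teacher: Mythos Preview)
Your argument is correct. The Fourier splitting at $|\xi|=1/\eta$ together with the one-dimensional instance of the trace inequality (Lemma~\ref{tr}) applied with $A=|\xi|$ does the job cleanly; the key identity $|\xi|^{a-1}=|\xi|^{-2s}\le \eta^{2s}$ on $\{|\xi|>1/\eta\}$ is exactly what produces the correct interpolation exponent. Your observation that the term $\|x_{n+1}^{a/2}\nabla_x f\|_{L^2}$ is not actually needed on the right-hand side of \eqref{inter1} is also correct and is a mild sharpening of the stated inequality. The deduction of \eqref{inter2} as the $n=1$ case with $x_1$ relabeled $t$ is immediate.

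As for the comparison: the present paper does not supply a proof of Lemma~\ref{interpol}; it is quoted verbatim from \cite[Lemma~2.4]{AryaBan}. So there is no in-paper argument to compare against. That said, the Fourier-in-$x$ approach you take is the natural one for this kind of anisotropic trace/interpolation inequality and is essentially the standard route; it is very likely close in spirit to what is done in \cite{AryaBan}. The only minor remark is that you invoke the ``$n=0$ case'' of Lemma~\ref{tr}, which as stated in the paper is for $n\ge 1$; but the one-dimensional inequality $|g(0)|^2\le C\big(A^{1+a}\int_0^\infty y^a g^2+A^{a-1}\int_0^\infty y^a (g')^2\big)$ is of course elementary (integrate $-\partial_y(g^2\chi)$ for a suitable cutoff $\chi$ supported in $[0,1/A]$), so this is not a gap.
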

 
\section{The key lemmas}\label{s:3}

For the simplicity of exposition, we will assume that $c=1$ in Theorem \ref{main} and Corollary \ref{main0}. Furthermore, we make the following remark.

\begin{remark}
In the rest of the paper, the constant $c$ that appears in Lemma \ref{lemma-mont}, Theorem \ref{carl1-un} and Theorem \ref{estimates-for-resc} is different from   the one in Theorem \ref{main} and Corollary \ref{main0}.
\end{remark}

We will also  assume that
\begin{equation}\label{vasump}
||V||_{C^2_{(x,t)} (\Rn \times (-T, 0))} \leq 1.
\end{equation}

We first show that via a compactness argument,  the non-vanishing condition at the boundary for the nonlocal problem \eqref{e0} as in \eqref{non1} implies a similar non-vanishing for the extension problem \eqref{exprob}.  Since the proof is via compactness, we show this result for a larger ``compact'' family of solutions to \eqref{exprob}.
\begin{lemma}[Bulk non-degeneracy]
\label{com-un}
Let  $W$ be a solution to 

\begin{equation}\label{exprob1}
		\begin{cases}
			x_{n+1}^a \partial_t W + \operatorname{div}(x_{n+1}^a \nabla W)=0\ \ \ \ \ \ \ \ \ \ \ \ \text{in} \ \ \mathbb{R}_{+}^{n+1} \times [0, 25),
			\\	
			\py W(x, 0,t)= \tilde VW\ \ \ \ \ \ \ \ \ \text{in}\ B_5 \times [0,25),
		\end{cases}
	\end{equation}
where $\tilde V$ satisfies \eqref{vasump}.
Furthermore, assume that 
$\|W\|_{L^{\infty}(\mathbb{Q}_5^+)}\leq C$ and  $\int_{Q_{1/2}}W^2(x, 0, t) dx\,dt\geq \theta>0.$ Then there exists a constant $\kappa:=\kappa(\theta, a, n)>0$ such that \begin{equation}\label{nonden}\int_{\mathbb Q_{1/2}^{+}} x_{n+1}^{a}W^2 dX\, dt\geq \kappa.\end{equation}
\end{lemma}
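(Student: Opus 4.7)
\medskip

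\noindent\textbf{Proof proposal for Lemma \ref{com-un}.} The plan is to argue by contradiction using a standard compactness and Arzel\`a--Ascoli scheme, exploiting the H\"older regularity up to the thin set $\{x_{n+1}=0\}$ provided by Lemma \ref{reg1}. Suppose the conclusion fails. Then there exist sequences of potentials $\tilde V_k$ with $\|\tilde V_k\|_{C^2} \le 1$ and corresponding weak solutions $W_k$ of \eqref{exprob1} on $\mathbb{R}^{n+1}_+ \times [0,25)$ such that
\[
\|W_k\|_{L^\infty(\mathbb Q_5^+)} \le C, \qquad \int_{Q_{1/2}} W_k^2(x,0,t)\, dx\, dt \ge \theta, \qquad \int_{\mathbb Q_{1/2}^+} x_{n+1}^a W_k^2\, dX\, dt \;\xrightarrow[k\to \infty]{}\; 0.
\]

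First, applying Arzel\`a--Ascoli to $\{\tilde V_k\}$ we extract a subsequence converging in $C^1$ to some $\tilde V_\infty$ with $\|\tilde V_\infty\|_{C^2}\le 1$. Next, the uniform $L^\infty$ bound on $\mathbb Q_5^+$ supplies a uniform weighted $L^2$ bound on $\mathbb Q_4^+$, so Lemma \ref{reg1} yields a uniform bound on the parabolic H\"older norm of $W_k$ (and of the derivatives $\partial_t W_k$, $\nabla_x W_k$, $x_{n+1}^a \partial_{x_{n+1}} W_k$) on compact subsets of $\mathbb Q_3^+$ up to the thin set $\{x_{n+1}=0\}$. By Arzel\`a--Ascoli we can pass to a further subsequence converging, uniformly on compact subsets of $\mathbb Q_3^+$ up to $\{x_{n+1}=0\}$, to a limiting function $W_\infty$ (which will itself be a weak solution of \eqref{exprob1} with potential $\tilde V_\infty$, although this fact is not needed for the contradiction).

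Now the bulk smallness and the uniform $L^\infty$ bound, combined with the uniform pointwise convergence on compact subsets of $\{x_{n+1}>0\}$, force
\[
\int_{\mathbb Q_{1/2}^+} x_{n+1}^a W_\infty^2\, dX\, dt = 0,
\]
and by continuity of $W_\infty$ in the interior we conclude $W_\infty \equiv 0$ on $\mathbb Q_{1/2}^+$. Since the convergence is actually H\"older up to the thin set, $W_\infty$ is continuous on $\overline{\mathbb Q_{1/2}^+}$ and hence $W_\infty(x,0,t) \equiv 0$ on $Q_{1/2}$. The uniform convergence $W_k(x,0,t) \to W_\infty(x,0,t)=0$ on $\overline{Q_{1/2}}$ then gives
\[
\int_{Q_{1/2}} W_k^2(x,0,t)\, dx\, dt \;\longrightarrow\; 0,
\]
contradicting the assumed lower bound $\theta$. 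This proves \eqref{nonden} with $\kappa = \kappa(\theta,a,n)>0$.

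The only genuinely delicate point is the first extraction step: we need enough H\"older regularity up to $\{x_{n+1}=0\}$ of $W_k$ so that uniform convergence can be transferred from the interior to the boundary trace. This is exactly what Lemma \ref{reg1} supplies (the H\"older exponent $\alpha'$ there controls $W_k$, $\partial_t W_k$ and $x_{n+1}^a \partial_{x_{n+1}} W_k$ simultaneously), so the argument goes through as sketched. No quantitative Carleman or monotonicity input is needed at this stage; only a soft compactness + interior-to-boundary continuity argument.
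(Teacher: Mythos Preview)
Your proposal is correct and follows essentially the same compactness/Arzel\`a--Ascoli argument as the paper: assume the conclusion fails, extract a sequence with vanishing bulk mass but boundary mass $\ge\theta$, use the uniform H\"older bounds from Lemma~\ref{reg1} to pass to a limit $W_\infty$ that vanishes in $\mathbb Q_{1/2}^+$ yet has nonzero trace on $Q_{1/2}$, a contradiction. The only cosmetic difference is that you also extract a limiting potential $\tilde V_\infty$, which (as you correctly note) is not needed for the contradiction.
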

\begin{proof}
On the contrary if there does not exists any $\kappa,$ then for each $j\in \mathbb{N}$ there exists $W_{j}$ such that $\int_{Q_{1/2}}W_{j}^2(x, 0, t) dx\,dt\geq \theta$,
\begin{align}
\label{compact-un}
\int_{\mathbb Q_{1/2}^{+}} x_{n+1}^{a}W_{j}^2 dX\, dt<\frac{1}{j},\end{align}
and
\begin{equation}\label{bounded}
||W_{j}||_{L^{\infty}(\mathbb Q_5^+)}\leq C.
\end{equation}
Moreover, $W_j$ solves the problem 
\begin{equation}\label{exprob-com}
		\begin{cases}
			x_{n+1}^a \partial_t W_j + \operatorname{div}(x_{n+1}^a \nabla W_j)=0\ \ \ \ \ \ \ \ \ \ \ \ \text{in} \ \ \mathbb{Q}_{5}^{+}\\
			\py W_j(x, 0,t)= V_j W_j\ \ \ \ \ \ \ \ \ \text{in}\ Q_5,
		\end{cases}
	\end{equation}
	with $V_j$'s satisfying the bound in \eqref{vasump}.

The key thing is to notice that, from the  regularity estimates in Lemma~\ref{reg1} and \eqref{bounded}, the H\"older norms of $W_j's$ are uniformly bounded. So using Arzel\'a-Ascoli, possibly passing through a subsequence, $W_j\to W_0$ in $H^{\alpha}(\mathbb{Q}_{2}^{+})$ up to $\{x_{n+1}=0\}$ for some $\alpha>0$.  Consequently, using \eqref{compact-un} and uniform convergence, we have
\begin{align}
\label{compact-2-un}
\int_{\mathbb Q_{1/2}^{+}} x_{n+1}^{a}W_{0}^2 dX\, dt=0.
\end{align}
Again $\int_{Q_{1/2}}W_{j}(x,0,t)^2 dx\,dt\geq\theta$ implies by uniform convergence that  $\int_{Q_{1/2}}W_{0}(x, 0, t)^2 dx\,dt\geq \theta>0.$ This contradicts \eqref{compact-2-un} and thus the conclusion follows.
\end{proof}
We now record the following important  consequence of Lemma~\ref{com-un}.

\begin{lemma}
\label{com-conse}
With $U$ as in \eqref{exprob} there exists $\gamma >0$ and  some $t_{0}\in [0, 1/4-\gamma)$ such that
\begin{align}
\label{lower0}
 \int_{\mathbb B_{1/2}^{+}} x_{n+1}^{a}U^2(X, t_{0})\, dX\geq \kappa.
\end{align}
\end{lemma}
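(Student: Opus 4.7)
\medskip

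\textbf{Plan for the proof of Lemma \ref{com-conse}.} The strategy is to combine the bulk non-degeneracy Lemma \ref{com-un} with a simple pigeonhole argument using the uniform $L^\infty$ bound on $U$ provided by Lemma \ref{reg}. First I would apply Lemma \ref{com-un} directly to $U$. To do so I verify its hypotheses: (i) $U$ solves \eqref{exprob} with a boundary potential satisfying $\|V\|_{C^2}\leq 1$ by \eqref{vasump}; (ii) $\|U\|_{L^\infty(\mathbb Q_5^+)}\leq C$ from Lemma \ref{reg}; and (iii) the boundary non-degeneracy
\[
\int_{Q_{1/2}} U^2(x,0,t)\,dx\,dt \;=\; \int_{B_{1/2}\times[0,1/4)} u^2(x,-\tau)\,dx\,d\tau \;\geq\; \theta^2,
\]
which follows from the hypothesis \eqref{non1} (with $c=1$) after the time reversal $t\mapsto -t$ that turns \eqref{wk} into the backward extension problem \eqref{exprob}. (We implicitly take $T$ large enough that the time interval $[0,25)$ used in Lemma \ref{com-un} is contained in $[0,T)$; otherwise a parabolic rescaling is applied.) Lemma \ref{com-un} then yields
\[
\int_0^{1/4} f(t)\,dt \;\geq\; \kappa, \qquad \text{where} \qquad f(t) \;:=\; \int_{\mathbb B_{1/2}^{+}} x_{n+1}^a\, U^2(X,t)\,dX.
\]

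Next I extract a specific time slice from this integral bound. By Lemma \ref{reg}, $\|U\|_{L^\infty(\mathbb R^{n+1}_+\times[0,T))}\leq C_0$, so
\[
f(t) \;\leq\; C_0^2 \int_{\mathbb B_{1/2}^+} x_{n+1}^a\,dX \;=:\; M \;<\;\infty \qquad \text{for every } t\in[0,T).
\]
Choose $\gamma \in (0,\, \kappa/(2M))$. Then $\int_{1/4-\gamma}^{1/4} f(t)\,dt \leq M\gamma < \kappa/2$, which combined with the previous display gives $\int_0^{1/4-\gamma} f(t)\,dt \geq \kappa/2$. By the mean value theorem for integrals there exists $t_0\in[0,1/4-\gamma)$ with
\[
f(t_0) \;\geq\; \frac{\kappa/2}{1/4-\gamma} \;\geq\; 2\kappa,
\]
which, after possibly relabeling the universal constant $\kappa$, is precisely the conclusion \eqref{lower0}.

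The substantive work has already been done in Lemma \ref{com-un} (where a compactness-plus-Arzel\`a-Ascoli argument based on the H\"older regularity of Lemma \ref{reg1} rules out the degenerate limit); beyond that invocation, the present lemma is essentially a bookkeeping exercise. The only point of potential care is tracking how $\gamma$ and the final constant depend on the prescribed data $\theta, s, n, c$ and $C$: the dependence comes through $\kappa=\kappa(\theta,a,n)$ from Lemma \ref{com-un} and through the constant $C_0$ in the $L^\infty$ bound of Lemma \ref{reg}, which is controlled by $\|u\|_{\mathscr H^{2s}}$ and $\|V\|_{C^2}$. So the choice of $\gamma$ is universal in the sense required for the downstream arguments in Section \ref{s:main}.
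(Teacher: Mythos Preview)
Your argument is correct and follows essentially the same route as the paper: invoke Lemma \ref{com-un} to get the space--time lower bound $\int_0^{1/4} f(t)\,dt\ge\kappa$, use the $L^\infty$ bound of Lemma \ref{reg} to cap $f$, and then extract a good time slice bounded away from $t=1/4$ by a pigeonhole/averaging step. The only cosmetic difference is that the paper selects $t_0$ as the \emph{infimum} of the set $\{t\in(0,1/4):f(t)\ge\kappa\}$ and then bounds $1/4-t_0$ from below via the same $L^\infty$ estimate, whereas you use the mean value theorem on $[0,1/4-\gamma)$; both yield a $t_0\in[0,1/4-\gamma)$ with $f(t_0)\ge\kappa$ and the specific choice is immaterial for the subsequent Lemma \ref{com-conse1}.
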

\begin{proof}
We choose $t_0$ as  \begin{equation}\label{t0} t_0= \inf \bigg\{t \in (0,1/4): \int_{\mathbb B_{1/2}^+} x_{n+1}^a U^2(X, t) \,dX \geq \kappa \bigg\}.\end{equation} Thanks to \eqref{nonden} (which also applies to $U$ as \eqref{non1} holds), the corresponding set is non-empty and thus $t_0$ exists. The existence of $\gamma$ follows from the fact that from \eqref{nonden}, Lemma \ref{reg1} and the definition of $t_0$ as in \eqref{t0},  we have
\begin{equation}\label{in}
\kappa \leq \int_{\mathbb Q_{1/2}^+} x_{n+1}^a U^2 = \int_{0}^{t_0} \int_{\mathbb B_{1/2}^+}  x_{n+1}^a U^2+ \int_{t_0}^{1/4} \int_{\mathbb B_{1/2}^+} x_{n+1}^a U^2   \leq \kappa t_0 + (1/4 -t_0) \tilde C
\end{equation}
where $\tilde C= C^2 \int_{\mathbb B_{1/2}^+} x_{n+1}^adX,$ with $C$  as in Lemma \ref{reg}. From \eqref{in}  we find using $t_0 \leq 1/4$ that the following inequality holds
\[
\kappa \leq \f{\kappa}{4} + (1/4-t_0) \tilde C,
\]
which in turn implies that
\begin{equation}\label{k1}
(1/4- t_0) \geq \f{3\kappa}{4\tilde C}\ \ .
\end{equation}
Therefore, $\gamma$ can be taken as  $\f{3\kappa}{4\tilde C}$ which implies the desired conclusion.

\end{proof}

Lemma \ref{com-conse} combined with the monotonicity in time result in \cite[Lemma 3.1]{ABDG} implies the following non-degeneracy estimate for $U$ in space-time.

\begin{lemma}\label{com-conse1}
With the assumptions as in Lemma \ref{com-conse} above, we have that there exists $0<\tilde \delta < \gamma$ ($\gamma$ as in Lemma \ref{com-conse} above) and $\tilde \kappa \in (0,1)$ such that for $\tilde t \in [t_0, t_0+\tilde \delta)$, we have
\begin{align}
\label{lower}
\int_{\mathbb B_{1}^+} x_{n+1}^a U^2(X, \tilde t) dX \geq  \tilde \kappa.
\end{align}
\end{lemma}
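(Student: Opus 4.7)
The statement is a short-time forward propagation of the spatial lower bound obtained at the single instant $t_0$ in Lemma \ref{com-conse}, together with an enlargement of the spatial region from $\mathbb B_{1/2}^+$ to $\mathbb B_1^+$. The strategy is to combine the space-time H\"older regularity of $U$ (from Lemma \ref{reg1}), the uniform $L^\infty$ bound (Lemma \ref{reg}), and the Gaussian monotonicity in time from \cite[Lemma 3.1]{ABDG}.

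First, I would record the starting datum from Lemma \ref{com-conse}, namely
$$\int_{\mathbb B_{1/2}^+} x_{n+1}^a U^2(X,t_0)\,dX \geq \kappa.$$
Next, I would observe that, since $a\in (-1,1)$, the weight $x_{n+1}^a$ is locally integrable, and Lemma \ref{reg1} combined with Lemma \ref{reg} ensures that $U$ is uniformly H\"older continuous up to $\{x_{n+1}=0\}$ on a fixed neighborhood of $\mathbb B_{1/2}^+\times\{t_0\}$ with H\"older norm controlled by universal constants. Consequently, the function
$$
t \,\longmapsto\, \int_{\mathbb B_{1/2}^+} x_{n+1}^a U^2(X,t)\,dX
$$
is continuous in $t$, with a modulus of continuity that depends only on $\theta, a, n$ and $C$.

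The role of \cite[Lemma 3.1]{ABDG} is to make this continuity quantitative and uniform across the admissible class of solutions: the monotonicity (or log-convexity) in time of the appropriate Gaussian-weighted energy of $U$ forbids the $L^2$-mass at $t_0$ from collapsing instantaneously, and yields a universal $\tilde\delta_0 > 0$ and a constant $c>0$ such that
$$
\int_{\mathbb B_{1/2}^+} x_{n+1}^a U^2(X,\tilde t)\,dX \,\geq\, c\,\kappa \qquad \text{for all } \tilde t \in [t_0,\,t_0+\tilde\delta_0).
$$
Setting $\tilde\kappa := \min\{c\kappa,\,1/2\}$ and $\tilde\delta := \min\{\tilde\delta_0,\,\gamma\}$, and using the trivial inclusion $\mathbb B_{1/2}^+\subset \mathbb B_1^+$, gives the desired bound \eqref{lower}.

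The only delicate point is making sure $\tilde\delta$ and $\tilde\kappa$ depend only on the universal parameters $\theta, a, n$ (together with $C$), and not on some higher norm of the particular solution $U$; this is exactly what the monotonicity in time of \cite[Lemma 3.1]{ABDG} buys us beyond naked continuity, since otherwise the modulus of continuity produced by Lemma \ref{reg1} alone would depend on $\|U\|_{L^2}$ on a larger set, which is not a priori bounded below.
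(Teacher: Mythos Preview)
Your proposal is correct and follows essentially the same route as the paper: invoke the lower bound at $t_0$ from Lemma \ref{com-conse} and propagate it forward in time via \cite[Lemma 3.1]{ABDG}, then take $\tilde\delta=\min(c_0,\gamma)$ and $\tilde\kappa=c_1\kappa$. One small imprecision: the monotonicity lemma in \cite{ABDG} naturally outputs the lower bound on the \emph{enlarged} ball $\mathbb B_1^+$ (starting from data on $\mathbb B_{1/2}^+$), so the passage to $\mathbb B_1^+$ is built into the lemma rather than obtained afterwards by the inclusion $\mathbb B_{1/2}^+\subset\mathbb B_1^+$; your discussion of H\"older continuity is true but not needed.
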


\begin{proof}
First, we note that from Lemma \ref{com-conse},  there exists $\gamma>0$ and  $t_{0}\in [0, 1-\gamma)$  such that
\begin{align}
\label{lower1}
 \int_{\mathbb B_{1/2}^{+}} x_{n+1}^{a}U^2(X, t_{0}) dX\geq \kappa.
\end{align}
Then by applying the monotonicity result in \cite[Lemma 3.1]{ABDG}, we have that  for $c_0, c_1 \in (0,1)$   depending  on $n,s, \kappa$ and $C$ in Lemma \ref{reg},  the following inequality holds for  all $t \in [t_0, t_0 +c_0)$
\begin{align}
\label{lower2}
 \int_{\mathbb B_{1}^{+}} x_{n+1}^{a}U^2(X, t) dX\geq c_1 \kappa.
\end{align}
We now let $\tilde \delta =\min (c_0, \gamma)$, $\tilde \kappa = c_1 \kappa$  and thus the conclusion follows.
\end{proof}

\subsection{Rescaled situation}

Fix some $x_{0}\in \mathbb{R}^n$ with $|x_{0}|\geq M$ where $M$ is large enough and will be adjusted later. Let $R\rho=2|x_0|$ where $\rho$  will be chosen as in Theorem \ref{estimates-for-resc} corresponding to $\tilde \kappa$  in Lemma \ref{com-conse1}.  Then given $\tilde t \in [t_0, t_0 +\tilde \delta)$ with $\tilde \delta$ as in Lemma \ref{com-conse1}, the rescaled function \begin{equation}\label{u_r} U_{R} (X,t):=U(RX+(x_{0}, 0), R^2 t+\tilde t)\end{equation} satisfies the following estimate  as a consequence of Lemma~\ref{com-conse1}
\begin{align}\label{lob}
R^{(n+a+1)}\int_{\mathbb{B}_{\rho}^{+}} U_{R}^2(X, 0) x_{n+1}^{a}\, dX=\int_{\mathbb{B}_{2|x_0|}^{+}((x_{0}, 0))} U^2(X, \tilde t) x_{n+1}^{a}\, dX\geq\int_{\mathbb B_{1}^{+}} x_{n+1}^{a}U^2(X, \tilde t) dX\geq \tilde{\kappa}.
\end{align}
Here onwards we shall look into the rescaled scenario and derive results for the rescaled function $U_R$ and eventually we will scale back to $U$. We have that corresponding to $U$ in \eqref{exprob},  $U_{R}$ satisfies the following equation:
\begin{equation}\label{exprob-un-rep}
		\begin{cases}
			x_{n+1}^a \partial_t U_{R} + \operatorname{div}(x_{n+1}^a \nabla U_{R})=0\ \ \ \ \ \ \ \ \ \ \ \ \text{in} \ \mb{B}_5^+ \times [0, \frac{1}{R^2}),
			\\	
			U_{R}(x,0, t)= u_{R}(x,t)
			\\
			\py U_R(x, 0,t)= R^{2s}V_{R}U_R\ \ \ \ \ \ \ \ \ \text{in}\ B_5 \times [0, \frac{1}{R^2}),
		\end{cases}
	\end{equation}
\text{where} \begin{equation}\label{urvr}  V_{R}(x, t):=V(Rx+(x_{0},0), R^2t+\tilde t).	\end{equation}
We  now  derive our first monotonicity result which is the nonlocal counterpart of \cite[Lemma 1]{EKVP}. It is to be mentioned that although similar results  have appeared in the  previous works \cite{ABDG, AA} which deals with the local asymptotic of solutions to \eqref{exprob},  the new feature of the result in Lemma \ref{lemma-mont} below is the validity of a similar monotonicity result in time   for $t \in [0, 1/R^2]$ under a certain asymptotic behaviour (in $R$) of the weighted Dirichlet to Neumann map  as $R \to \infty$. More precisely, we are interested in deriving an inequality as in \eqref{mon-un-1} below when the zero order perturbation $\tilde V  := R^{2s} V_R$ of the weighted Neumann derivative  $\py U_R$ satisfies  $|| \tilde V||_{L^\infty} \leq R^{2s}$. Note that such a bound on $\tilde V$ holds   in view of \eqref{vasump}.

\begin{lemma}[Monotonicity]
\label{lemma-mont}
Let $U_R$ be as in  \eqref{u_r} and \begin{equation}\label{use1}R^{(n+a+1)}\int_{\mathbb{B}_{\rho}^{+}} U_R^2(X, 0) x_{n+1}^{a}\, dX\geq \tilde{\kappa}, \end{equation} for some $\tilde{\kappa}, \rho\in (0, 1)$ and $R\geq 10.$ Then there exists a large universal constant $M=M(n, a, \kappa)$ such that 
\begin{align}
M\int_{\mathbb{B}_{2\rho}^{+}} U_R^2(X, t) x_{n+1}^{a}\, dX\geq R^{-(n+a+1)},\ \text{(which follows from \eqref{lob})},
\label{mon-un-1}
\end{align}
for all $0\leq t\leq \f{\ c}{R^2},$ where $ c$  is  sufficiently small.
\end{lemma}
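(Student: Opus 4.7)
The plan is to adapt the monotonicity-in-time argument from \cite[Lemma 3.1]{ABDG} to the rescaled setting, carefully tracking the dependence on the rescaling parameter $R$. The key novelty is that the Neumann boundary condition $\py U_R = R^{2s}V_R U_R$ carries a large coefficient $R^{2s}$, which forces the ``window of monotonicity'' in $t$ to shrink from an $O(1)$ interval to $[0, c/R^2]$.

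I would take as weight the fundamental solution $G(X,\tau) = (4\tau)^{-(n+1+a)/2} e^{-|X|^2/(4\tau)}$ of $\mathscr{L}_a$ on the half-space (with natural conormal data), and consider the moving-Gaussian quantity
\[
F(t) = \int_{\mathbb{R}^{n+1}_+} x_{n+1}^a\, U_R^2(X,t)\, G(X, t+\tau_0)\, dX,
\]
with $\tau_0$ comparable to $\rho^2$ so that the Gaussian matches the initial concentration scale. Differentiating, using \eqref{exprob-un-rep} together with the identity $x_{n+1}^a G_\tau = \operatorname{div}(x_{n+1}^a \nabla G)$, and integrating by parts, the bulk $G_\tau$ contributions cancel exactly, leaving
\[
F'(t) = 2\!\int x_{n+1}^a |\nabla U_R|^2 G\, dX + 2 R^{2s}\!\int V_R U_R^2 G\, dx.
\]
(The boundary contributions from $\partial_{x_{n+1}} G$ on $\{x_{n+1}=0\}$ vanish because $x_{n+1}^{a+1} G \to 0$ as $x_{n+1}\to 0^+$.)

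The critical estimate is on the boundary term. Applying the trace inequality (Lemma \ref{tr}) to $f = U_R\,G^{1/2}$ with parameter $A = K R$ for a large universal $K$, and exploiting the identities $1+a=2(1-s)$ and $a-1=-2s$, one has $R^{2s}A^{1+a} = K^{1+a} R^2$ and $R^{2s} A^{a-1} = K^{a-1}$. Choosing $K$ so that $K^{a-1}$ is small enough to absorb the Dirichlet gradient part, and using the Hardy inequality (Lemma \ref{hardy}) to control the cross term $\int x_{n+1}^a U_R^2 |X|^2 G\, dX$ arising from $|\nabla G^{1/2}|^2$, yields
\[
\bigl| 2R^{2s}\!\int V_R U_R^2 G\, dx \bigr| \le C_1 R^2 F(t) + \frac{C_2}{t+\tau_0} F(t) + \frac{1}{2}\!\int x_{n+1}^a |\nabla U_R|^2 G\, dX.
\]
The Dirichlet piece is absorbed by the positive $2\int x_{n+1}^a|\nabla U_R|^2 G$, and one arrives at
\[
F'(t) \ge -\Bigl(C_1 R^2 + \frac{C_2}{t+\tau_0}\Bigr) F(t).
\]
Gronwall on $[0, t]$ with $t \le c/R^2 \le \tau_0$ (which holds for $R$ large, since $\tau_0$ is fixed) then gives $F(t) \ge F(0) e^{-C_1 c} (\tau_0/(t+\tau_0))^{C_2} \ge F(0)/M_1$. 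Combined with $F(0) \ge c(\rho, n, a)\,\tilde{\kappa}\, R^{-(n+a+1)}$ from \eqref{use1}, the claim follows by splitting $F(t) = F_{\mathrm{in}}(t) + F_{\mathrm{out}}(t)$ and bounding $F_{\mathrm{in}}(t) \le G_{\max}\!\int_{\mathbb{B}_{2\rho}^+} x_{n+1}^a U_R^2\, dX$, with the tail $F_{\mathrm{out}}(t)$ controlled via the $L^\infty$ estimate of Lemma \ref{reg}.

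The main obstacle is the simultaneous calibration of $\tau_0$ so that three things are balanced at once: (i) the initial concentration on $\mathbb{B}_\rho^+$ is captured efficiently by $F(0)$; (ii) the logarithmic Gronwall factor $(\tau_0/(t+\tau_0))^{C_2}$ does not blow up on the short interval $[0,c/R^2]$; and (iii) the Gaussian tail $F_{\mathrm{out}}(t)$, controlled only through the universal $L^\infty$ bound, remains small relative to $F(t)$ so that the Gronwall lower bound actually transfers to $\int_{\mathbb{B}_{2\rho}^+} x_{n+1}^a U_R^2(X,t)\, dX$. Together with the clean trace balancing in Step 3, this is precisely where the assumption $s\ge 1/2$ (equivalently $a\le 0$) plays a decisive role in producing the sharp $R^2$-scaling of the boundary penalty.
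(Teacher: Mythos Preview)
Your Gronwall strategy and trace balancing are sound, but the proposal has a genuine gap at the very last step, and the architecture differs in an important way from the paper's.

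\medskip

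\textbf{The gap.} With $\tau_0\sim\rho^2$ fixed and no spatial cutoff, the tail $F_{\mathrm{out}}(t)=\int_{|X|>2\rho}x_{n+1}^aU_R^2\,G(X,t+\tau_0)\,dX$ is controlled by $\|U_R\|_\infty^2$ only up to a \emph{universal} constant: the weighted Gaussian mass outside $\mathbb{B}_{2\rho}^+$ at scale $\sqrt{\tau_0}\sim\rho$ is a fixed fraction of the total, so $F_{\mathrm{out}}(t)=O(1)$. But the Gronwall lower bound you obtain is $F(t)\gtrsim R^{-(n+a+1)}$, which is tiny. Hence $F_{\mathrm{in}}(t)=F(t)-F_{\mathrm{out}}(t)$ can be swamped, and no lower bound on $\int_{\mathbb{B}_{2\rho}^+}x_{n+1}^aU_R^2$ follows. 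Making $\tau_0$ depend on $R$ (e.g.\ $\tau_0\lesssim\rho^2/\log R$) can repair this, but then your stated calibration ``$\tau_0$ comparable to $\rho^2$'' fails and the whole balance of (i)--(iii) has to be redone.

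\medskip

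\textbf{How the paper handles this.} The paper avoids the tail issue entirely by (a) localizing with a smooth cutoff $\phi\in C_0^\infty(\mathbb{B}_{2})$ and working with $f=\phi\,U_R$, and (b) using the genuine fundamental solution $\mathcal{G}(Y,X,t)$ (heat kernel times Bessel kernel) with a \emph{moving pole} $Y$ rather than the shifted Gaussian $G(X,t+\tau_0)$. Because $\mathcal{G}(Y,\cdot,t)\to\delta_Y$ as $t\to 0^+$, one gets $H(0^+)=U_R(Y,0)^2$ pointwise; the cutoff error lives on the annulus where $|X|\ge 3/2$ and hence is damped by $e^{-1/(Nt)}$, which for $t\le c/R^2$ is far smaller than any negative power of $R$. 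After Gronwall one integrates over $Y\in\mathbb{B}_1^+$ and uses $\int\mathcal{G}(Y,X,t)\,y_{n+1}^a\,dY=1$ to land directly on $\int_{\mathbb{B}_2^+}x_{n+1}^aU_R^2(X,t)\,dX$ without any in/out splitting. The trace parameter in the paper is $A\sim t^{-1/2}$ (so that $R^{2s}A^{a-1}=R^{2s}t^{s}\le c^s\ll1$ on $[0,c/R^2]$), which is the natural choice once the weight has scale $\sqrt t$ rather than $\sqrt{\tau_0}$.

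\medskip

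\textbf{A minor point.} Your remark that $s\ge 1/2$ is decisive for the trace balancing here is off: with either $A=KR$ or $A\sim t^{-1/2}$, the absorption works for all $s\in(0,1)$ (indeed $R^{2s}A^{a-1}=K^{-2s}$ in your scaling). The restriction $s\ge 1/2$ in the paper enters later, in the Carleman estimate (Theorem~\ref{carl1-un}), through the requirement $\alpha^{2s}\gtrsim R^{2s+1}$ with $\alpha\sim R^2$.
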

\begin{proof}
For simplicity, we show it for $\rho=1$. Let  $f= \phi\, U_R,$ where $\phi \in C_0^{\infty}(\mathbb B_2)$ is a spherically symmetric cutoff such that $0\le \phi\le 1$ and $\phi \equiv1$ on $\mathbb B_{3/2}.$ Considering the symmetry of $\phi$ in $x_{n+1}$ variable and the fact that $U_R$ solves \eqref{exprob-un-rep}, we obtain
		\begin{equation}\label{feq-un}
			\begin{cases}
				x_{n+1}^a f_t + \operatorname{div}(x_{n+1}^a \nabla f) = 2 x_{n+1}^a \langle\nabla U,\nabla \phi\rangle  + \operatorname{div}(x_{n+1}^a \nabla \phi) U\ \ \ \ \ \ \ \text{in} \ \mb{B}_5^+ \times [0, \frac{1}{R^2}),
				\\	
				f(x,0,t)= u(x,t)\phi(x,0)
				\\
				\py f(x,0, t)= R^{2s} V_{R}f\ \ \ \ \ \ \ \ \ \ \ \ \ \ \ \ \ \ \ \ \ \ \ \ \ \ \  \text{in}\ {B}_5 \times [0, \frac{1}{R^2}).
			\end{cases}
		\end{equation}
Define
\begin{align*}
			H(t) = \int_{\mb{R}^{n+1}_+} x_{n+1}^a f(X,t)^2 \mc{G}(Y,X,t) dX,
		\end{align*}
		where $\mc{G}(Y,X,t) = p(y, x, t) \ p_a(x_{n+1},y_{n+1};t),$ and  $p(y,x,t)$ is the heat-kernel associated to $\left(\dd_t - \Delta_{x}\right)$ and $p_a$ is the fundamental solution of  the Bessel operator $\dd^2_{x_{n+1}} + \f{a}{x_{n+1}}\dd_{x_{n+1}}$. It is well-known that $p_a$ is given by the formula
   \begin{equation}\label{pa}
   	p_a(x_{n+1}, y_{n+1}; t) = (2t)^{- \f{1+a}{2}} e^{-\f{x_{n+1}^2 + y_{n+1}^2}{4t}} \left( \f{x_{n+1} y_{n+1}}{2t} \right)^{ \f{1-a}{2}} I_{\f{a-1}{2}}\left( \f{x_{n+1} y_{n+1}}{2t} \right),
   \end{equation}
   where $I_\nu (z)$ the modified Bessel function of the first kind defined by the series
   
   \begin{align}\label{besseries}
   	I_{\nu}(z) = \sum_{k=0}^{\infty}\frac{(z/2)^{\nu+2k} }{\Gamma (k+1) \Gamma(k+1+\nu)}, \hspace{4mm} |z| < \infty,\; |\operatorname{arg} z| < \pi.
   \end{align}
   Also, for $t>0$, $\mc{G} = \mc{G}(Y, \cdot)$ solves $\operatorname{div}(x_{n+1}^a \nabla \mc{G}) = x_{n+1}^a \partial_t \mc{G}.$  We refer the reader to \cite{Gcm} for the relevant details. Differentiating with respect to $t$, we find 

\begin{align}\label{hprime}
			H'(t) & = 2 \int x_{n+1}^a f f_t \mc{G} + \int x_{n+1}^a f^2 \dd_t\mc{G} \\
			& = 2 \int x_{n+1}^a f f_t \mc{G} + \int f^2 \tn{div}\left(x_{n+1}^a \nabla \mc{G} \right)\notag \\
   & = 2 \int x_{n+1}^a f f_t \mc{G} - \int x_{n+1}^a \langle \nabla(f^2),  \nabla \mc{G} \rangle \notag \\
   & = 2 \int x_{n+1}^a f f_t \mc{G}+ \int \operatorname{div}( x_{n+1}^a \nabla(f^2))\mc{G}+2 R^{2s}\int_{\{x_{n+1}=0\}} V_Rf^2 G\notag \\
			& = 2 \int f \mc{G} \left( x_{n+1}^a f_t + \tn{div} \left(x_{n+1}^a \cdot \nabla f \right) \right) + 2\int x_{n+1}^a  \mc{G} |\nabla f|^2+2 R^{2s}\int_{\{x_{n+1}=0\}} V_Rf^2 G\notag\\
   &=J_1+J_2+J_3.\label{ja}
		\end{align}

\begin{itemize}
\item For every $Y\in \mathbb B_1^+$ and $0<t\le \f{1}{R^2}$ we have (keeping in mind equation (3.13) in \cite{ABDG})
\begin{equation}\label{i1}
J_1 \geq - C e^{-\frac{1}{N t}} N R^{4}.\end{equation}

This can be seen as follows. Following the proof of the inequality (3.13) in \cite{ABDG}, we find

\begin{equation}\label{j1}
|J_1| \leq C e^{-\frac{1}{Nt}} \int_{\mb{B}_2^+} x_{n+1}^a (|\nabla U_R|^2 + U_R^2).\end{equation}
Since $U_R$ solves \eqref{exprob-un-rep}, by invoking the $L^{\infty}$ bounds on $U_R, x_{n+1}^a \partial_{x_{n+1}}U_R, \nabla_x U_R$ using Lemma \ref{reg1}, we find that \eqref{i1} follows. We then observe that since $t \leq 1/R^2$, for a different $N$, it follows from \eqref{i1} that the following holds
\begin{equation}\label{i10}
J_1 \geq Ce^{-\frac{1}{Nt}}.
\end{equation}

\item 
We now recall the  inequality in \cite[(3.21)]{ABDG}.   Keeping in mind that only $L^\infty$ norm of $R^{2s}V_{R}$ appears in the expression, we find that  for every $Y\in \mathbb B_1^+$ and $0<t\le 1/R^2$ one has
\begin{align}
|J_3| & \notag \leq C(n,a)R^{2s}\bigg(A^{1+a}\int f^2 \mc G x_{n+1}^a dX + \frac{n+a+1}{4t} A^{a-1} \int f^2 \mc G  x_{n+1}^a dX 
\\
\notag& + A^{a-1} \int |\nabla f|^2 \mc G  x_{n+1}^a dX\bigg)\\
& \notag \leq C(n,a) R^{2s}\bigg(t^{-\f{1+a}{2}}\int f^2 \mc G x_{n+1}^a dX + t^{-\f{1+a}{2}} \int f^2 \mc G  x_{n+1}^a dX 
\\
& + t^{\f{1-a}{2}} \int |\nabla f|^2 \mc G  x_{n+1}^a dX\bigg)\,\,\,(\text{putting}\,\,A\sim \f{1}{\sqrt{t}}).
\label{mk1}
\end{align}
\end{itemize}
Combining \eqref{i10} and \eqref{mk1} we obtain
\begin{align}
\notag H'(t)&\geq - C e^{-\frac{1}{N t}} +2 \int x_{n+1}^a  \mc{G} |\nabla f|^2\\
&-C R^{2s}t^{-\f{1+a}{2}} H(t) -C R^{2s}t^{\f{1-a}{2}} \int |\nabla f|^2 \mc G  x_{n+1}^a dX.\label{h1-un}
\end{align}
For $0\leq t\leq \f{c}{R^2}$  using \eqref{sa} we have $R^{2s}t^s\ll 1,$ provided $c$ is sufficiently small.  This in turn ensures that  the second term absorbs the last one in \eqref{h1-un}. Thus we find
\begin{align}
H'(t)&\geq - C e^{-\frac{1}{N t}} -C R^{2s}t^{-\f{1+a}{2}} H(t).
\label{h2-un}
\end{align}
As a conclusion we get
\begin{align}
\left(e^{ CR^{2s}t^{\f{1-a}{2}}} H(t)\right)'\geq -C e^{R^{2s}t^{\f{1-a}{2}}} e^{-\frac{1}{N t}} \label{ekta-imp}. 
\end{align}
Keeping in mind that $0<t\leq \f{c}{R^2},$ integrating \eqref{ekta-imp} from $0$ to $t$ we get using \begin{equation}\label{lim}
		\lim_{t \to 0^+} H(t) = U_R (Y, 0)^2\ \text{(see \cite[(3.6]{ABDG})}, \end{equation}
that the following inequality holds
\begin{align*}
& e^{CR^{2s}t^{\f{1-a}{2}}} H(t)- U_R(Y, 0)^2\geq -C N\int_{0}^{t} e^{R^{2s}\eta^{\f{1-a}{2}}} e^{-\frac{1}{N \eta}} d\eta\\
&\implies M H(t)\geq U_R(Y, 0)^2-C N  t e^{R^{2s}t^{\f{1-a}{2}}} e^{-\frac{1}{N t}}.
\end{align*}

Again integrating  with respect to  $Y$  in $\mathbb B_1^+$ and exchanging the order of integration,  using $\int \mc{G}(Y,X,t) y_{n+1}^a dY=1$ and by   renaming the variable $Y$ as $X$  we obtain using \eqref{use1}
\begin{align*}
M \int_{\mathbb B_2^+}U_R(X,t)^2 x_{n+1}^a dX&\geq \int_{\mathbb B_1^+}U_R(X,0)^2 x_{n+1}^a dX -C N  t e^{R^{2s}t^{\f{1-a}{2}}} e^{-\frac{1}{N t}}\\
&\geq \tilde{\kappa} R^{-(n+a+1)}-C N t e^{R^{2s}t^{\f{1-a}{2}}} e^{-\frac{1}{N t}}\gtrsim R^{-(n+a+1)},
\end{align*}
where we have used that for $0\leq t\leq \f{ c}{ R^2},$ $e^{R^{2s}t^{\f{1-a}{2}}}$ is uniformly bounded and the quantity $e^{-\frac{1}{N t}}$ can be made suitably small.   The conclusion thus follows.
\end{proof}

We  now state and prove our  main  Carleman estimate in the rescaled setting \eqref{exprob-un-rep}  which is needed to obtain the desired lower bounds at infinity for  solutions to the extension problem \eqref{exprob}. As remarked earlier, the main new feature  of Theorem \ref{carl1-un} is the validity   of the Carleman estimate   in \eqref{care1-un} below  in presence of   the prescribed  limiting behaviour (in $R$) of the weighted Dirichlet to Neumann map as $R \to \infty$.
\begin{theorem}[Main Carleman estimate]
\label{carl1-un}
Let $s\in [\f{1}{2}, 1)$ and $ \widetilde{\mc{H}}_s$ be the   backward in time extension operator  in \eqref{extop}. Let $ w \in C_0^\iy \left( \overline{\mb{B}^+_4} \times [0,\left. \f{1}{e\ld}\right) \right)$ where $ \ld = \frac{\alpha}{ \delta^2}$ for some  $ \delta \in (0,1)$ sufficiently small. Furthermore, assume that  $\py w\equiv R^{2s}V_{R}w$ on $\{x_{n+1}=0\}$(with $V_R$ as in \eqref{urvr}) and
\begin{equation}\label{al}
\alpha\geq M R^{2},
\end{equation} where $M$ is a large universal constant. Then  the following estimate holds
		\begin{align}\label{care1-un}
			& \alpha^2 \int_{\mb{R}^{n+1}_+ \times [c, \iy)} x_{n+1}^{a} \sigma_{s}^{-2 \alpha}(t) \ w^2 \ G  + \alpha \int_{\mb{R}^{n+1}_+ \times [c, \iy)} x_{n+1}^{a} \sigma_{s}^{1-2 \alpha}(t)\  |\nabla w|^2 \ G  \\
			& \leq M  \int_{\mb{R}^{n+1}_+ \times [c, \iy)} \sigma_{s}^{ 1-2 \alpha}(t) x_{n+1}^{-a} \ \lvert \widetilde{\mc{H}_{s}} w \rvert ^2 \ G\notag\\
   &+ \sigma_{s}^{-2 \alpha}(c) \left\{ -\f{c}{M} \int_{t=c} x_{n+1}^a \ |\nabla w(X,c)|^2 \ G(X,c) \ dX + M\alpha \int_{t=c} x_{n+1}^a \ |w(X,c)|^2 \ G(X,c) \ dX\right\}.\notag
		\end{align}
		Here $\sigma_{s}$ is as in Lemma \ref{sigma},  $G(X,t) = \f{1}{{t^\f{n+1+a}{2}}} e^{-\f{|X|^2}{4t}}$ and $0 < c \le \f{1}{5\ld}. $
	\end{theorem}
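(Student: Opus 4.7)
The plan is to adapt the Carleman estimate of \cite{AA} to our rescaled setting \eqref{exprob-un-rep}, carefully tracking how the new Neumann coupling $R^{2s}V_R$ on $\{x_{n+1}=0\}$ is absorbed. I would begin by conjugating $\widetilde{\mathcal{H}}_s$ with the Carleman weight $\sigma_s^{-2\alpha}(t)\,G(X,t)$: setting $v = \sigma_s^{-\alpha}(t)\,G(X,t)^{1/2}\,w$, one computes the conjugated operator $L^\alpha v$ in terms of $v$ and the logarithmic derivatives of $\sigma_s$ and $G$. Squaring and integrating against the appropriate weighted measure, one then splits $L^\alpha = \mathcal{S} + \mathcal{A}$ into symmetric and anti-symmetric pieces, yielding the standard identity $\|L^\alpha v\|^2 = \|\mathcal{S}v\|^2 + \|\mathcal{A}v\|^2 + \langle[\mathcal{S},\mathcal{A}]v,v\rangle$ modulo boundary contributions.

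The commutator computation then proceeds as in \cite{AA}. The properties (1)--(4) of $\sigma_s$ from Lemma \ref{sigma} are tailored precisely so that $\partial_t(\sigma_s \log(\sigma_s/\sigma_s' t))$ is well controlled and the commutator generates the dominant positive terms $\alpha^2 \sigma_s^{-2\alpha} v^2$ and $\alpha\,\sigma_s^{1-2\alpha}|\nabla v|^2$ with errors of size $\theta_s(\lambda t)/t$ that can be absorbed. The Hardy inequality of Lemma \ref{hardy} is then invoked to swallow the Gaussian factor $|X|^2/t$ coming from $|\nabla G|^2/G^2$ against $\int x_{n+1}^a |\nabla v|^2$; this is what forces the support condition $t \in [0, 1/e\lambda)$. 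Integration by parts in $t$ on $[c,\infty)$ produces exactly the $t=c$ boundary integrals appearing on the second line of \eqref{care1-un}, while the endpoint at $t=\infty$ vanishes since $w$ is compactly supported.

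The main obstacle, and the only essential departure from \cite{AA}, is the boundary contribution at $\{x_{n+1}=0\}$ arising from integration by parts in $x_{n+1}$: the Neumann condition $\partial_{x_{n+1}}^a w = R^{2s} V_R w$ generates integrals of the form $R^{2s}\int_{x_{n+1}=0} V_R\,w\,\Phi\,\sigma_s^{-2\alpha} G$, where $\Phi$ involves $w$, $w_t$, or $\nabla_x w$. To absorb these into the LHS of \eqref{care1-un}, I would apply the trace inequality of Lemma \ref{tr} for terms linear in $w$ and $\nabla_x w$, and the interpolation inequality \eqref{inter1} of Lemma \ref{interpol} for terms involving $w_t$. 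In each case, the auxiliary parameter $A$ (or $\eta$) must be tuned so that the resulting bulk bounds take the form $R^{2s}\alpha^{\beta}\cdot\bigl(\text{weighted }\|v\|^2 + \|\nabla v\|^2\bigr)$ with $\beta < 2$. The hypothesis $s \geq 1/2$ combined with $\alpha \geq MR^2$ is exactly what makes $R^{2s}\alpha^{\beta}$ dominated by the $\alpha^2$ and $\alpha$ coefficients on the LHS: indeed, $R^{2s} \leq (\alpha/M)^s$, and for $s \geq 1/2$ the exponent $s$ is large enough to close the inequality. Any $s < 1/2$ would break this balance, as $R^{2s}$ could not then be controlled by a small enough power of $\alpha$. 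Finally, Lemma \ref{do} is invoked to compare integrals over nested half-balls after absorption, completing the proof of \eqref{care1-un}.
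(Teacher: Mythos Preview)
Your overall architecture (conjugate by $\sigma_s^{-\alpha}G^{1/2}$, expand the square, extract positive commutator terms via the properties of $\sigma_s$ in Lemma~\ref{sigma}, absorb the Neumann boundary contributions using the trace inequality and Hardy) is the same as the paper's. However, two of your steps contain genuine gaps.

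\textbf{The mechanism for $s\geq 1/2$ is misidentified.} Your stated inequality $R^{2s}\le (\alpha/M)^s$ is trivially true for any $s\in(0,1)$ once $\alpha\ge MR^2$, so it cannot be the source of the restriction. The actual obstruction comes from the boundary term $K_1$, which after integrating $\langle x,\nabla_x\rangle$ by parts picks up $\langle x,\nabla_x V_R\rangle$. Since $|\nabla_x V_R|\le R$ by \eqref{ubd}, the relevant prefactor is $R^{2s+1}$, not $R^{2s}$. Applying Lemma~\ref{tr} with a \emph{time-dependent} parameter $A(t)=\bigl(CR^{2s+1}/\theta_s(\lambda t)\bigr)^{1/2s}$ and demanding that both resulting bulk terms be absorbed by $\frac{\alpha}{N}\int\frac{\theta_s(\lambda t)}{t}w^2G$ and $\frac{1}{N}\int\frac{\theta_s(\lambda t)}{t}\sigma_s^{1-2\alpha}|\nabla w|^2G$ forces $\alpha^{2s}\gtrsim R^{2s+1}$. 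With $\alpha\sim R^2$ this reads $R^{4s}\gtrsim R^{2s+1}$, i.e.\ $s\ge 1/2$. Your proposal of a constant $A$ and exponent bookkeeping of the form $R^{2s}\alpha^\beta$ with $\beta<2$ does not capture this.

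\textbf{The $\partial_t$ boundary term is not handled by interpolation.} The term $K_2=-R^{2s}\int_{\{x_{n+1}=0\}} t^{-\mu}(t\sigma_s'/\sigma_s)^{-1/2}V_R\,\partial_t(v^2/2)$ is dealt with in the paper by integrating by parts in $t$: this throws the time derivative onto $V_R$ and the weight, producing only zero-order terms in $v$ (plus a trace at $t=c$) that are then controlled exactly as $K_1$. Your plan to invoke Lemma~\ref{interpol} for $w_t$ would introduce bulk norms of $\nabla w_t$ or $w_{tt}$, which are \emph{not} available on the left-hand side of \eqref{care1-un}; there is no way to close the estimate that way. Also, Lemma~\ref{do} plays no role in the proof of Theorem~\ref{carl1-un}; it is used only afterwards, in Theorem~\ref{estimates-for-resc}, to pass from the weighted inequality \eqref{help} to the doubling estimate.
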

	\begin{proof}
We  partly follow the arguments as   in  the proof of Theorem 3.1 in \cite{AA}. However  the reader will notice that  the proof of the estimate \eqref{care1-un} involves  some very delicate adaptations due to the presence of  an ``amplified'' boundary condition as in \eqref{exprob-un-rep} for  $R \to \infty$.  Before proceeding further, we mention that  throughout the proof, the solid integrals below will be taken in $ \mb{R}^n \times [c, \iy) $ where $ 0 < c \le \f{1}{\ld} $ and we refrain from mentioning explicit limits in the rest of our discussion. Note that 
		\[ x_{n+1}^{-\f{a}{2}} \widetilde{\mc{H}}_s = x_{n+1}^{\f{a}{2}} \left( \dd_t + \tn{div}(\nabla) + \f{a}{x_{n + 1}} \dd_{n+1}\right).\]
		Define 
		\[ w(X,t) = \sigma_{s}^\alpha(t) e^{\f{|X|^2}{8t}} v(X,t). \]

Therefore,
\begin{align*}
\tn{div} (\nabla w)& = \tn{div} \left( \sigma_{s}^\alpha(t) e^{\f{|X|^2}{8t}}  \left( \nabla v + \f{X}{4t} v \right) \right) \\ 
& = \sigma_{s}^\alpha(t) e^{\f{|X|^2}{8t}} \left[ \tn{div} (\nabla v) + \f{\langle X,  \nabla  v \rangle} {2t} + \left( \f{|X|^2}{16 t^2} + \f{n+1} {4t} \right) v \right].
\end{align*}
		Now we define the vector field 
		\begin{equation}\label{defz} \mc{Z} := 2t \dd_t + X \cdot \nabla. \end{equation}
		
		Note that $\mc{Z}$ is the infinitesimal generator of the parabolic dilations $\{\delta_r\}$ defined by $\delta_r(X,t)=(rX, r^2 t)$. Then
\begin{align*}
			x_{n+1}^{-\f{a}{2}} \sigma_{s}^{-\alpha}(t) e^{-\f{|X|^2}{8t}} \widetilde{\mc{H}}_sw 
			& = x_{n+1}^{\f{a}{2}}  \left[ \tn{div}\left( \nabla v\right) + \f{1}{2t} \mc{Z}v + \left( \f{n+1 + a} {4t} + \f{\alpha \sigma_{s}'} {\sigma_{s}}\right) v - \f{|X|^2}{16t^2} v  + \f{a}{x_{n+1}} \dd_{n+1} v \right].
		\end{align*}

		Next we consider the expression
		\begin{align}\label{ex1}
			&  \int \sigma_{s}^{-2 \alpha}(t) t^{-\mu} x_{n+1}^{-a} e^{-\f{|X|^2}{4t}} \left(\f{t \sigma_{s}'}{\sigma_{s}}\right)^{-\f{1}{2}}  \lvert \widetilde{\mc{H}_s} w \rvert ^2\notag \\ 
			&= \int x_{n+1}^{a} t^{-\mu} \left(\f{t \sigma_{s}'}{\sigma_{s}}\right)^{-\f{1}{2}} \left[ \tn{div}\left( \nabla v\right) + \f{1}{2t} \mc{Z}v + \left( \f{n+1 + a} {4t} + \f{\alpha \sigma_{s}'} {\sigma_{s}}\right) v - \f{|X|^2}{16t^2} v  + \f{a}{x_{n+1}} \dd_{n+1} v \right]^2,
		\end{align}
		where \begin{equation}\label{mu} \mu= \frac{n-1+a}{2}.\end{equation} Then we estimate the integral \eqref{ex1} from below with an application of the algebraic inequality
		\[ \int P^2 + 2 \int PQ \le \int \left( P + Q \right)^2,\]
		where $P$ and $Q$ are chosen as
		\begin{align*}
			& P = \f{x_{n+1}^{\f{a}{2}} t^{-\f{\mu + 2}{2}}}{ 2} \left(\f{t \sigma_{s}'}{\sigma_{s}}\right)^{-\f{1}{4}} \mc{Z}v, \\ 
			& Q = x_{n+1}^{\f{a}{2}} t^{-\f{\mu}{2}} \left(\f{t \sigma_{s}'}{\sigma_{s}}\right)^{-\f{1}{4}}\left[ \tn{div}\left( \nabla v\right)  + \left( \f{n+1 + a} {4t} + \f{\alpha \sigma_{s}'} {\sigma_{s}}\right) v - \f{|X|^2}{16t^2} v  + \f{a}{x_{n+1}} \dd_{n+1} v \right].
		\end{align*} 
		%%%%
		We compute the terms coming from the cross product, i.e. from $\int PQ.$ We write $$ \int PQ: = \sum_{k=1}^4 \mc{I}_k,$$ where
		\begin{align*}
			&  \mc{I}_1 = \int x_{n+1}^{a} t^{-\mu} \left(\f{t \sigma_{s}'}{\sigma_{s}}\right)^{-\f{1}{2}} \f{1}{2t} \mc{Z}v \left( \f{n+1 + a} {4t} + \f{\alpha \sigma_{s}'} {\sigma_{s}} \right)v, \\
			&  \mc{I}_2 = \int x_{n+1}^a t^{-\mu} \left(\f{t \sigma_{s}'}{\sigma_{s}}\right)^{-\f{1}{2}} \f{\mc{Z}v} {2t} \ \tn{div} \left( \nabla v \right), \\
			& \mc{I}_3 = \int x_{n+1}^a t^{-\mu} \left(\f{t \sigma_{s}'}{\sigma_{s}}\right)^{-\f{1}{2}} \f{\mc{Z}v} {2t} \left(- \f{|X|^2}{16 t^2}\right) v, \\
			&  \mc{I}_4 = \int x_{n+1}^{a} t^{-\mu} \left(\f{t \sigma_{s}'}{\sigma_{s}}\right)^{-\f{1}{2}} \f{\mc{Z}v} {2t} \f{a\, \dd_{n+1}v}{x_{n+1}}.
		\end{align*}
		
Following the arguments in \cite{AA}, we obtain ( see \cite[(3.22)]{AA})
\begin{align}
\notag &\mc{I}_{1}+\mc{I}_{2}+ \mc{I}_{3}+ \mc{I}_{4} \\
&\geq \f{\alpha}{N} \int x_{n+1}^{a} \sigma_{s}^{-2\alpha}(t)  \f{\theta_{s}(\lambda t)}{t}\, G\, w^2+ \frac{1}{N}  \int x_{n+1}^a \f{\theta_{s}(\lambda t)}{t} \sigma_{s}^{1-2\alpha}(t)\,\, G\,\, |\nabla w|^2\notag\\
&-N\alpha \sigma_{s}^{-2\alpha}(c) \int_{t=c} x_{n+1}^{a} w^2(X,c)\,G(X, c)+\f{c}{N}\, \sigma_{s}^{-2\alpha}(c) \int_{t=c} x_{n+1}^{a} |\nabla w|^2 G \ dX\notag\\
&-\f{1}{2} R^{2s}\int_{\{x_{n+1}=0\}} t^{-\mu-1}\left(\f{t \sigma_{s}'}{\sigma_{s}}\right)^{-\f{1}{2}} {V}_{R}(x, t)\, v\, \langle x, \nabla_{x}v\rangle-R^{2s}\int_{\{x_{n+1}=0\}}t^{-\mu}\left(\f{t\sigma_{s}'}{\sigma_{s}}\right)^{-1/2}  V_{R}(x, t) \partial_{t}\left(\f{v^2}{2}\right).\label{combined-est}
\end{align}
Let us estimate the boundary terms in \eqref{combined-est}. Using the divergence theorem we obtain the following alternate representation of such boundary terms.  

 \begin{align*}
 &K_{1}:=\f{1}{4} R^{2s} \int_{\{x_{n+1}=0\}} t^{-\mu-1}\left(\f{t \sigma_{s}'}{\sigma_{s}}\right)^{-\f{1}{2}} (V_{R}(x, t) n+\langle x, \nabla_{x} V_{R}(x, t)\rangle ) v^2,\\
 &K_{2}:=-R^{2s}\int_{\{x_{n+1}=0\}}t^{-\mu}\left(\f{t\sigma_{s}'}{\sigma_{s}}\right)^{-1/2}  V_{R}(x, t) \partial_{t}\left(\f{v^2}{2}\right).
 \end{align*}
 It is to be noted that using \eqref{vasump} and \eqref{urvr} we have
 \begin{equation}\label{ubd}
 |\nabla_x V_R| \leq R, \ |\partial_t V_R| \leq R^2.\end{equation}
 Using the trace inequality Lemma \ref{tr} and \eqref{ubd} we find 
\begin{align}
&|K_{1}|
 \lesssim  R^{2s+1} \int t^{-\mu-1} \sigma_{s}^{-2\alpha} \int_{\Rb^n} e^{-\frac{|x|^2}{4t}} w^2 \label{k1mid}\\
\notag & \lesssim R^{2s+1} \int t^{-\mu-1} \sigma_{s}^{-2\alpha} \left(A(t)^{1+a}\int_{\Rb^{n+1}_{+}} x_{n+1}^a e^{-\frac{|X|^2}{4t}} w^2+A(t)^{a-1}\int_{\Rb^{n+1}_{+}} x_{n+1}^a\big|\nabla w-w \f{X}{4t}\big|^2 e^{-\frac{|X|^2}{4t}} \right)\\
\notag & \lesssim R^{2s+1}\int t^{-\mu-1} \sigma_{s}^{-2\alpha} \left(A(t)^{1+a}\int_{\Rb^{n+1}_{+}} x_{n+1}^a e^{-\frac{|X|^2}{4t}} w^2+A(t)^{a-1}\int_{\Rb^{n+1}_{+}} x_{n+1}^a |\nabla w|^2 e^{-\frac{|X|^2}{4t}}\right.\\
&\left.+A(t)^{a-1}\int_{\Rb^{n+1}_{+}} x_{n+1}^{a} w^2 \f{|X|^2}{16t^2} e^{-\frac{|X|^2}{4t}} \right)\label{trace}
\end{align}
for $A(t)>1$. The choice of $A(t)$ will be crucial to complete our proof. Also, it follows from the Hardy inequality in Lemma \ref{hardy} that the following estimate holds
\begin{align}
\label{Hardy}
\int_{\Rb^{n+1}_{+}} x_{n+1}^{a} w^2 \f{|X|^2}{16t^2} e^{-\frac{|X|^2}{4t}}\leq \int_{\Rb^{n+1}_{+}} x_{n+1}^a \f{n+1+a}{4t} e^{-\frac{|X|^2}{4t}} w^2+\int_{\Rb^{n+1}_{+}} x_{n+1}^a e^{-\frac{|X|^2}{4t}} |\nabla w|^2.
\end{align}
Plugging the estimate \eqref{Hardy} in \eqref{trace}  and by using \eqref{mu} yields
\begin{align}
\notag |K_1|& \lesssim R^{2s+1} \left( \int A(t)^{1+a}  \sigma_{s}^{-2\alpha}\,  x_{n+1}^a G w^2+2\int A(t)^{a-1}x_{n+1}^a \sigma_{s}^{-2\alpha} |\nabla w|^2 G\right.\\
&\left.+\int A(t)^{a-1}\sigma_{s}^{-2\alpha-1} x_{n+1}^a  G\, w^2 \right).\label{k1est}
\end{align}
In the last inequality in \eqref{k1est} above, we used that $\sigma_s(t) \sim t$. 
Now we choose $A(t)>1$ in such a way that the above terms can be absorbed in the positive terms on the right hand side in \eqref{combined-est} above, i.e. in the terms  $\frac{\alpha}{N} \int x_{n+1}^{a} \sigma_{s}^{-2\alpha}(t) \f{\theta_{s}(\ld t)}{t} \ w^2  G$ and $\frac{1}{N} \int x_{n+1}^a \frac{\theta_{s}(\lambda t)}{t}  \sigma_{s}^{1-2\alpha}(t) |\nabla w|^2 G$. Therefore given the value of  $\mu$ as in \eqref{mu},  we require
\begin{equation}\begin{cases} A(t)^{1+a}R^{2s+1}\lesssim  \f{\alpha}{10N} \frac{\theta_{s}(\lambda t)}{t},\\ A(t)^{a-1} R^{2s+1}\lesssim \f{1}{10N}  \theta_{s}(\lambda t), \\ \frac{A(t)^{a-1}}{t} R^{2s+1}  \lesssim  \f{\alpha}{10N} \frac{\theta_{s}(\lambda t)}{t}.\end{cases}
\label{cases}
\end{equation}
It is easy to see that the third inequality automatically holds if the second one is satisfied since $\alpha$ is to be chosen large. Therefore, it is sufficient to choose $A(t)$ satisfying the first two inequalities. Recall that $a=1-2s,$ and if we set 
$$A(t)=\left(\f{10N  R^{2s+1}}{\theta_{s}(\lambda t)}\right)^{1/2s},$$ then the second inequality in \eqref{cases} is valid. Note that $A(t)>1$ as $\theta_{s}(t)\to 0$ as $t\to 0.$ Moreover, the above choice of $A$ will also satisfy the first inequality in \eqref{cases} if 
\begin{align}
\notag &\left(\f{10N R^{2s+1}} {\theta_{s}(\lambda t)}\right)^{\f{2(1-s)}{2s}} R^{2s+1}\leq \f{\alpha}{10N} \frac{\theta_{s}(\lambda t)}{t}\\
\notag\iff& \left(\f{10N  R^{2s+1}}{\theta_{s}(\lambda t)}\right)^{\f{1}{s}}\f{\theta_{s}(\lambda t)}
{10N R^{2s+1}} R^{2s+1}\leq \f{\alpha}{10N} \frac{\theta_{s}(\lambda t)}{t}\\
\label{qu}\iff & 10N  R^{2s+1}\leq \alpha^s t^{-s} \theta_{s}(\lambda t).
\end{align}
Finally, observe that $\theta_{s}(\lambda t)=(\lambda t)^s\left(\log \f{1}{\lambda t} \right)^{1+s}\geq (\lambda t)^s$ since $\log \f{1}{\lambda t}\geq 1$ on $[0, \f{1}{e\lambda}],$ so the inequality \eqref{qu} is ensured if we choose  $\alpha$ large enough such that $$ \alpha^s t^{-s} (\lambda t)^s\geq 10N  R^{2s+1}.$$
Consequently since $\lambda=\alpha \delta^2,$ by choosing some arbitrary $\delta\in (0, 1),$ we conclude that the choice of $A(t)$ above satisfy the set of inequalities in \eqref{cases} provided $$\alpha^{2s}\geq (1+ N)R^{2s+1}.$$
The above is ensured for  $\alpha\geq MR^2$ with $M$  large and $R>1$ provided $s\in [\frac{1}{2}, 1).$

For $K_2,$ applying integration by parts we observe
\begin{align}
|K_2|&=\bigg|R^{2s}\f{1}{2}\int_{\{x_{n+1}=0\}}(-\mu)t^{-\mu-1}\left(\f{t\sigma_{s}'}{\sigma_{s}}\right)^{-1/2}V_{R}(x, t) v^2+R^{2s}\f{1}{2}\int_{\{x_{n+1}=0\}} t^{-\mu}\f{-1}{2}\left(\f{t\sigma_{s}'}{\sigma_{s}}\right)^{-3/2} \left(\f{t\sigma_{s}'}{\sigma_{s}}\right)'  V_{R} v^2 \notag\\
&+R^{2s}\f{1}{2}\int_{\{x_{n+1}=0\}} t^{-\mu}\left(\f{t\sigma_{s}'}{\sigma_{s}}\right)^{-1/2} \partial_{t}(V_{R}) v^2+R^{2s}\f{1}{2}\int_{\{x_{n+1}=0; t=c\}} c^{-\mu}\left(\f{c\sigma_{s}'(c)}{\sigma_{s}(c)}\right)^{-1/2} V_{R}(x, c) v^2(x, c)\notag \bigg|.
\end{align}
Using \eqref{ubd},  the fact that  $\left(\f{t\sigma_{s}'}{\sigma_{s}}\right)\sim 1$ and also that  $0\leq t<\f{1}{R^{2}}, $ we observe that the first and third terms on the right hand side of the above expression can be  bounded by $$C  R^{2s}\int_{\{x_{n+1}=0\}} t^{-\mu-1} \sigma_{s}^{-2\alpha} e^{-|x|^2/4t} w^2.$$   
The second term is dominated by $R^{2s} \int_{\{x_{n+1}=0\}} t^{-\mu} \bigg|-\left(\f{t\sigma_{s}'}{\sigma_{s}}\right)'\bigg| v^2,$ which in turn is bounded by  
$$C R^{2s} \int_{\{x_{n+1}=0\}} t^{-\mu-1} \sigma_{s}^{-2\alpha} e^{-|x|^2/4t} w^2,$$
considering the fact that $-\left(\f{t\sigma_{s}'}{\sigma_{s}}\right)'$ is comparable to $\f{\theta_{s}(\lambda t)}{t}$ and $\theta_{s}(\lambda t)\to 0$ as $t\to 0.$ Combining the above arguments we have
\begin{align}
|K_2|&\lesssim R^{2s} \int_{\{x_{n+1}=0\}} t^{-\mu-1} \sigma_{s}^{-2\alpha} e^{-|x|^2/4t} w^2+\bigg|R^{2s}\f{1}{2}\int_{\{x_{n+1}=0\}} c^{-\mu}\left(\f{c\sigma_{s}'(c)}{\sigma_{s}(c)}\right)^{-1/2} V_{R}(x, c) v^2(x, c) \bigg|.\label{bdry} 
\end{align}
The first term in \eqref{bdry} can be handled similarly as $K_1,$ see \eqref{k1mid}-\eqref{qu}. For the last term in \eqref{bdry}, using trace inequality and performing similar calculations as in \eqref{k1est}, we obtain that
\begin{align}
\notag
 &\bigg|R^{2s} \f{1}{2}\int_{\{x_{n+1}=0; t=c\}} c^{-\mu}\left(\f{c\sigma_{s}'(c)}{\sigma_{s}(c)}\right)^{-1/2} V_{R}(x, c) v^2(x, c) \bigg|\\
\notag &\lesssim c R^{2s} \int_{\{x_{n+1}=0\}} c^{-\mu-1} \sigma_{s}^{-2\alpha}(c)\,e^{-\f{|x|^2}{4c}} w^2(x, c)\\
&\lesssim \notag  R^{2s} \left(c \sigma_{s}^{-2\alpha}(c) A^{1+a}\int  \,  x_{n+1}^a G(X, c) w^2(X, c)+2c A^{a-1}\sigma_{s}^{-2\alpha}(c)\int x_{n+1}^a  |\nabla w(X, c)|^2 G(X, c)\right.\\
&\left.+A^{a-1}\,c \f{n+1+a}{4c}\int \sigma_{s}^{-2\alpha}(c) x_{n+1}^a  G(X, c)\, w^2(X, c) \right)\label{name}
\end{align}
holds for any $A>1.$ If we now choose $A$ sufficiently large, say \begin{equation}\label{newa}A^{2s}\sim 100N R^{2s},\end{equation}  then the term $$2c R^{2s} A^{a-1}\sigma_{s}^{-2\alpha}(c)\int x_{n+1}^a  |\nabla w(X, c)|^2 G(X, c)$$ in \eqref{name} can easily be absorbed by the term $\f{c}{N} \, \sigma_{s}^{-2\alpha}(c) \int_{t=c} x_{n+1}^{a} |\nabla w|^2 G \ dX$ in \eqref{combined-est}. Corresponding to this choice of $A$ as in \eqref{newa}, we find by also  using that $c \lesssim \frac{1}{\alpha}\sim \frac{1}{R^2}$, the remaining terms in the last expression in \eqref{name} above can be estimated as
\begin{align*}
&R^{2s} \left( c \sigma_{s}^{-2\alpha}(c) A^{1+a}\int  \,  x_{n+1}^a G(X, c) w^2(X, c)+A^{a-1}\,c \f{n+1+a}{4c}\int \sigma_{s}^{-2\alpha}(c) x_{n+1}^a  G(X, c)\, w^2(X, c) \right) 
\\
&\leq N\alpha \sigma_{s}^{-2\alpha}(c) \int_{t=c} x_{n+1}^{a} w^2(X,c)\,G(X, c).\end{align*}

 Therefore, from the above discussion, the contributions from $K_1$ and $K_2$ can be absorbed appropriately by the first four terms in \eqref{combined-est} so that for large $\alpha$ satisfying $\alpha\geq MR^2$ for a large $M$ the following holds
\begin{align}
\label{finalest}
&\int \sigma_{s}^{-2 \alpha}(t) t^{-\mu} x_{n+1}^{-a} e^{-\f{|X|^2}{4t}} \left(\f{t \sigma_{s}'}{\sigma_{s}}\right)^{-\f{1}{2}}  \lvert \widetilde{\mc{H}_s} w \rvert ^2\\
&\geq \mc{I}_{1}+\mc{I}_{2}+ \mc{I}_{3}+ \mc{I}_{4}\notag\\
&\geq\f{\alpha}{N} \int x_{n+1}^{a} \sigma_{s}^{-2\alpha}(t)  \f{\theta_{s}(\lambda t)}{t}\, G\, w^2+ \frac{1}{N}  \int x_{n+1}^a \f{\theta_{s}(\lambda t)}{t} \sigma_{s}^{1-2\alpha}(t)\,\, G\,\, |\nabla w|^2\notag\\
&-N\alpha \sigma_{s}^{-2\alpha}(c) \int_{t=c} x_{n+1}^{a} w^2(X,c)\,G(X, c)+\f{c}{N}\, \sigma_{s}^{-2\alpha}(c) \int_{t=c} x_{n+1}^{a} |\nabla w|^2 G \ dX.\notag
\end{align}
Also, we have $\f{\theta_{s}(\lambda t)}{t}\gtrsim \lambda=\f{\alpha}{\delta^2},$ hence 
\begin{align}
\notag& N\int \sigma_{s}^{-2 \alpha}(t) t^{-\mu} x_{n+1}^{-a} e^{-\f{|X|^2}{4t}} \left(\f{t \sigma_{s}'}{\sigma_{s}}\right)^{-\f{1}{2}}  \lvert \widetilde{\mc{H}_s} w \rvert ^2\\
&\geq \alpha^2 \int x_{n+1}^{a} \sigma_{s}^{-2\alpha}(t) \ w^2  G +\alpha \int x_{n+1}^a   \sigma_{s}^{1-2\alpha}(t) |\nabla w|^2 G\notag\\
 &-N\alpha \sigma_{s}^{-2\alpha}(c) \int_{t=c} x_{n+1}^{a} w^2(X,c)\,G(X, c)+\f{c}{N}\, \sigma_{s}^{-2\alpha}(c) \int_{t=c} x_{n+1}^{a} |\nabla w|^2 G \ dX\label{finalest-1}
\end{align}
possibly for a new universal constant $N.$ Finally, the  conclusion follows from \eqref{finalest-1} since $$\int_{\mb{R}^{n+1}_+ \times [c, \iy)}  \sigma_{s}^{-2 \alpha}(t) t^{-\mu} x_{n+1}^{-a} e^{-\f{|X|^2}{4t}} \left(\f{t \sigma'}{\sigma}\right)^{-\f{1}{2}}  \lvert \widetilde{\mc{H}_s} w \rvert ^2 \sim \int_{\mb{R}^{n+1}_+ \times [c, \iy)} \sigma_{s}^{ 1-2 \alpha}(t) x_{n+1}^{-a} \ \lvert \widetilde{\mc{H}_s} w \rvert ^2 \ G.$$ 
 \end{proof}

\section{Proof of the main results}\label{s:main}
Given the Carleman estimate in  Theorem \ref{carl1-un}, we now argue as in the proof of \cite[Lemma 5]{EKVP} to obtain the following $L^2$  lower bounds for the rescaled function $U_R$ in \eqref{u_r} which solves  \eqref{exprob-un-rep}.

\begin{theorem}
\label{estimates-for-resc}
Given $\tilde{\kappa}\in (0, 1],$ there exist large universal constant $M=M(n, s, \tilde{\kappa})$ and $\rho\in (0, 1)$ such that the following holds true:

If $U_R$ is as in \eqref{u_r} with $R^{(n+a+1)}\int_{\mathbb{B}_{\rho}^{+}} U_R^2(X, 0) x_{n+1}^{a}\, dX\geq \tilde{\kappa}$ ( note that this inequality in turn is assured by \eqref{lob}).  Then
 \begin{enumerate}
    \item 
For sufficiently small $\epsilon>0$ and $R \geq M$ we have     
\begin{align}
\int_{\mathbb{B}_{2}^{+}} x_{n+1}^a \ U_R(X, 0)^2 \ e^{-\frac{|X|^2 R^2}{\epsilon}} \ dX\geq e^{-MR^2 \log\big(\frac{1}{\epsilon}\big)}.
\label{res-main-1}
\end{align}
 \item 
For all $0\leq r<\f{1}{2},$ we have 
\begin{align}
\int_{\mathbb B_{r}^+} U_R^2(X, 0) x_{n+1}^a dX\geq e^{-MR^2 \log\big(\frac{2}{r}\big)}.
\label{res-main-2}
\end{align}
 \end{enumerate}
\end{theorem}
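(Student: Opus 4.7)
The plan is to adapt the rescaling strategy of \cite[Lemma 5]{EKVP} to our nonlocal, rescaled setting \eqref{exprob-un-rep}, using the $R$-dependent Carleman estimate of Theorem \ref{carl1-un} together with the monotonicity of Lemma \ref{lemma-mont}. First, a direct application of Lemma \ref{lemma-mont} to the hypothesis at $t=0$ yields the slab lower bound
\[
\int_{\mathbb{B}_{2\rho}^+} x_{n+1}^a\, U_R^2(X,t)\, dX \gtrsim \tilde\kappa\, R^{-(n+a+1)}, \qquad t\in[0, c_0/R^2],
\]
for a universal $c_0>0$. This is the bulk non-vanishing input to the Carleman estimate.

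Next, choose a radial spatial cutoff $\eta\in C_0^\infty(\mathbb{B}_4^+)$ with $\eta\equiv 1$ on $\mathbb{B}_3^+$ and even in $x_{n+1}$, a time cutoff $\chi\in C_0^\infty([0, 1/(e\lambda)))$ with $\chi\equiv 1$ on $[0,1/(2e\lambda)]$, and apply Theorem \ref{carl1-un} to $w:=\eta\chi U_R$ with $\alpha=MR^2$ and a time parameter $c=\epsilon/R^2$ for $\epsilon\in(0,1)$ to be chosen. Since $U_R$ solves \eqref{exprob-un-rep}, the test function $w$ inherits the boundary condition $\py w=R^{2s}V_R w$ and satisfies
\[
\widetilde{\mc{H}}_s w = \chi\bigl(\operatorname{div}(x_{n+1}^a\nabla\eta)\,U_R + 2x_{n+1}^a\langle\nabla\eta,\nabla U_R\rangle\bigr) + x_{n+1}^a\eta\chi'(t)\,U_R,
\]
whose first piece is supported in $\{3\le|X|\le 4\}$ (contributing a factor $e^{-9/(4t)}$ through $G$) and whose second piece lives near $t\sim 1/(e\lambda)$ (where $\sigma_s^{1-2\alpha}(t)$ is suppressed against $\sigma_s^{-2\alpha}(c)$ by a factor $\lesssim(\epsilon/c_0)^{2\alpha}$). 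Combined with the $L^\infty$ bounds from Lemmas \ref{reg}--\ref{reg1}, the source term on the RHS of \eqref{care1-un} is negligible. Meanwhile, restricting the LHS of \eqref{care1-un} to $[c,2c]\times\mathbb{B}_{2\rho}^+$, where $w=U_R$, and using $G(X,t)\ge t^{-(n+1+a)/2}e^{-\rho^2/t}$ together with the slab bound above, one extracts a lower bound of order $\alpha^2 c^{1-2\alpha-(n+1+a)/2}e^{-\rho^2/c}R^{-(n+a+1)}$. Rearranging against the $M\alpha\sigma_s^{-2\alpha}(c)\int_{t=c}x_{n+1}^a w^2 G$ term on the RHS, substituting $c=\epsilon/R^2$ so that $G(X,c)\sim(R^2/\epsilon)^{(n+1+a)/2}e^{-R^2|X|^2/(4\epsilon)}$, and collecting $\sigma_s^{-2\alpha}(c)\sim e^{2\alpha\log(R^2/\epsilon)}$ into the final exponent, a parabolic H\"older comparison from Lemma \ref{reg1} between $U_R(\cdot,c)$ and $U_R(\cdot,0)$ then delivers \eqref{res-main-1}.

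For \eqref{res-main-2}, apply \eqref{res-main-1} with $\epsilon=(r/4)^2$ and split $\mathbb{B}_2^+=\mathbb{B}_r^+\cup(\mathbb{B}_2^+\setminus\mathbb{B}_r^+)$: on the annular piece the weight $e^{-R^2|X|^2/\epsilon}\le e^{-R^2}$ is negligible compared with $e^{-MR^2\log(2/r)}$ for $R\ge M$ (using $\|U_R\|_{L^\infty}$ to control total mass), while on $\mathbb{B}_r^+$ the weight is $\le 1$, so the announced lower bound on $\int_{\mathbb{B}_r^+}x_{n+1}^a U_R^2(X,0)\,dX$ follows after absorbing constants into $M$. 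The main technical obstacle is ensuring that the commutator source and all boundary-correction terms in the Carleman estimate are truly dominated by the data-driven lower bound uniformly in $R\ge M$ and $\epsilon\in(0,1)$; this balance is what forces the choice $\rho\ll 1$ and the scaling $c\sim\epsilon/R^2$, and it is precisely the quantitative $R$-dependence $\alpha\ge MR^2$ in \eqref{al} (available for $s\ge 1/2$) that makes all the pieces fit.
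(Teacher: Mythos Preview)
Your overall strategy---combine Lemma \ref{lemma-mont} with Theorem \ref{carl1-un}---matches the paper, but there are two genuine gaps.

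\medskip

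\textbf{Part (1): the H\"older transfer from $t=c$ to $t=0$ fails.} Applying \eqref{care1-un} directly puts the boundary term at $t=c$, so after rearranging you obtain a lower bound on $\int x_{n+1}^a\,U_R(X,c)^2\,e^{-|X|^2R^2/\epsilon}\,dX$. You then propose to replace $U_R(\cdot,c)$ by $U_R(\cdot,0)$ using parabolic H\"older continuity. Since $U_R(X,t)=U(RX+(x_0,0),R^2t+\tilde t)$, one has $|U_R(X,c)^2-U_R(X,0)^2|\le C(R^2c)^{\alpha'/2}=C\epsilon^{\alpha'/2}$; the resulting error in the weighted integral is of order $\epsilon^{\alpha'/2}(\epsilon/R^2)^{(n+1+a)/2}$, which decays only polynomially in $R$. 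The target lower bound $e^{-MR^2\log(1/\epsilon)}$ decays super-exponentially in $R$, so for $R$ large the H\"older error swamps it and the transfer is vacuous. The paper avoids this entirely by applying a \emph{time-shifted} version of \eqref{care1-un} (replace $t$ by $t+c$), so the boundary term sits directly at $t=0$ with weight $G(X,c)$; no comparison between $U_R(\cdot,c)$ and $U_R(\cdot,0)$ is needed.

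\medskip

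\textbf{Part (2): the splitting argument does not close.} With your choice $\epsilon=(r/4)^2$, on $\mathbb{B}_2^+\setminus\mathbb{B}_r^+$ one has $R^2|X|^2/\epsilon\ge 16R^2$, so the annular contribution is $\le Ce^{-16R^2}$. But the right-hand side of \eqref{res-main-1} becomes $e^{-2MR^2\log(4/r)}$; since $\log(4/r)>\log 8$ for $r<1/2$ and $M$ is large, one has $2M\log(4/r)\gg 16$, hence $Ce^{-16R^2}\gg e^{-2MR^2\log(4/r)}$ and the annular term cannot be absorbed. (For $r\to 0$ this only gets worse, as $e^{-MR^2\log(2/r)}\to 0$ while $e^{-16R^2}$ is fixed.) The paper takes a completely different route here: it keeps the \emph{negative} gradient term $-\frac{c}{M}\int_{t=0}x_{n+1}^a|\nabla f(X,0)|^2G(X,c)$ in \eqref{care1-un} and deduces from it the spectral-type inequality
\[
2c\int x_{n+1}^a|\nabla f(X,0)|^2e^{-|X|^2/4c}+\tfrac{n+1+a}{2}\int x_{n+1}^a f(X,0)^2 e^{-|X|^2/4c}\le M^3R^2\int x_{n+1}^a f(X,0)^2 e^{-|X|^2/4c},
\]
valid for all small $c$. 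This is exactly the hypothesis of Lemma \ref{do}, which then yields the doubling inequality $\int_{\mathbb{B}_{2r}^+}U_R^2(X,0)\,x_{n+1}^a\le e^{MR^2}\int_{\mathbb{B}_r^+}U_R^2(X,0)\,x_{n+1}^a$; iterating dyadically gives \eqref{res-main-2}. In your sketch the gradient term is discarded, so this mechanism is unavailable.
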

\begin{proof}
Let us highlight the key steps in the proof. The key ingredients are  the quantitative  Carleman estimate in Theorem \ref{carl1-un} and the improved  monotonicity in time result in  Lemma~\ref{lemma-mont}.
\medskip

\noindent{\textbf{Step 1:}} Let  $f= \eta(t)\phi(X) U_R,$ where $\phi \in C_0^{\infty}(\mathbb B_3)$ is a spherically symmetric cutoff such that $0\le \phi\le 1$ and $\phi \equiv1$ on $\mathbb B_{2}.$ Moreover, let $\eta$ be a cutoff in time such that $\eta=1$ on $[0, \f{1}{8\lambda}]$ and supported in $[0, \f{1}{4\lambda}).$ Since $U_R$ solves \eqref{exprob}, we see that the function $f$ solves the problem
\begin{equation}\label{feq1}
\begin{cases}
x_{n+1}^a f_t + \operatorname{div}(x_{n+1}^a \nabla f) =\phi x_{n+1}^a U_R\eta_{t}+ 2 x_{n+1}^a \eta\langle\nabla U_R,\nabla \phi\rangle  + \eta\operatorname{div}(x_{n+1}^a \nabla \phi) U_R\ \ \ \ \ \ \ \text{in} \ \mathbb B_5^+\times [0, \f{1}{R^2}),
\\	
f((x,0,t)= U_R(x,0,t)\phi(x,0)\eta(t)
\\

				\py f(x,0, t)= R^{2s} V_{R}f\ \ \ \text{in}\ \ \ \ \  B_5\times [0, \f{1}{R^2})
\end{cases}
\end{equation}
Since $\phi$ is symmetric in the $x_{n+1}$ variable, we have $\phi_{n+1}\equiv 0$ on $\{x_{n+1}=0\}$. Since $\phi$ is smooth, the following estimates are true, see \cite[(3.31)]{ABDG}.
\begin{equation}\label{obs1}
\begin{cases}
\operatorname{supp} (\nabla \phi) \cap \{x_{n+1}>0\}  \subset \mathbb B_3^+ \setminus \mathbb B_{2}^+
\\
|\operatorname{div}(x_{n+1}^a \nabla \phi)| \leq C x_{n+1}^a\ \mathbf 1_{\mathbb B_3^+ \setminus \mathbb B_{2}^+}.
\end{cases}
\end{equation}

\medskip

\noindent{\textbf{Step 2:}} The  Carleman estimate \eqref{care1-un} applied to $f$ (more precisely, a shifted in time version of \eqref{care1-un}) yields the following inequality for sufficiently large $\alpha$ satisfying $\alpha\geq MR^2$ and $0 < c \le \f{1}{5\ld}$   
\begin{align}
\notag & \alpha^2 \int_{\mb{R}^{n+1}_+ \times [0, \iy)} x_{n+1}^{a} (\sigma_{s}(t+c))^{-2 \alpha} \ f^2 \ G(X, t+c)  + \alpha \int_{\mb{R}^{n+1}_+ \times [0, \iy)} x_{n+1}^{a} (\sigma_{s}(t+c))^{1-2 \alpha}\  |\nabla f|^2 \ G(X, t+c)  \\
			\notag & \lesssim  M\int_{\mb{R}^{n+1}_+ \times [0, \iy)} \sigma_{s}^{ 1-2 \alpha}(t+c) x_{n+1}^{-a} \ \lvert \phi x_{n+1}^a U_R\eta_{t}+ 2 x_{n+1}^a \eta\langle\nabla U_R,\nabla \phi\rangle  + \eta\operatorname{div}(x_{n+1}^a \nabla \phi) U_R \rvert ^2 \ G(X, t+c)\\
   &+ \sigma_{s}^{-2 \alpha}(c) \left\{ - \f{c}{M} \int_{t=0} x_{n+1}^a \ |\nabla f(X,0)|^2 \ G(X,c) \ dX + \alpha M\int_{t=0} x_{n+1}^a \ |f(X, 0)|^2 \ G(X,c) \ dX\right\}\notag\\
   \notag & \lesssim  M\lambda^2\int_{\mb{R}^{n+1}_+ \times [0, \iy)} (\sigma_{s}(t+c))^{1-2 \alpha}\, G(X, t+c)\, x_{n+1}^{a} |U_R|^2 \mathbf 1_{[\f{1}{8\lambda}, \f{1}{4\lambda})}\\
   \notag &+M\int_{\mb{R}^{n+1}_+ \times [0, \iy)}  x_{n+1}^a \sigma_{s}^{ 1-2 \alpha}(t+c)\{|\nabla U_R|^2+|U_R|^2\}\mathbf 1_{\mathbb B_3 \setminus \mathbb B_{2}} \eta^2 G(X, t+c)\\
   &+ \sigma_{s}^{-2 \alpha}(c) \left\{ - \f{c}{M} \int_{t=0} x_{n+1}^a \ |\nabla f(X,0)|^2 \ G(X,c) \ dX + \alpha M\int_{t=0} x_{n+1}^a \ |f(X, 0)|^2 \ G(X,c) \ dX\right\}.\label{req1-un}
\end{align}

\noindent{\textbf{Step 3:}} Now we plug the following estimate ( see \cite[(4.24)]{AA}) 
\begin{align}\label{step3-un}\sigma_{s}^{ 1-2 \alpha}(t+c)G(X, t+c)\lesssim M^{2\alpha-1} \lambda^{2\alpha+\f{n+a+1}{2}}, \ (X,t) \in \mb{B}_3^+ \times [0, 1/4\lambda) \setminus \mb{B}_2^+ \times [0, 1/8 \lambda)\end{align}
in \eqref{req1-un} yielding
\begin{align}
\notag & \alpha^2 \int_{\mb{R}^{n+1}_+ \times [0, \iy)} x_{n+1}^{a} (\sigma_{s}(t+c))^{-2 \alpha} \ f^2 \ G(X, t+c) + \alpha \int_{\mb{R}^{n+1}_+ \times [0, \iy)} x_{n+1}^{a} (\sigma_{s}(t+c))^{1-2 \alpha}\  |\nabla f|^2 \ G(X, t+c)  \\ 
\notag & \lesssim   M^{2\alpha+\f{n+a+1}{2}} \alpha^{2\alpha+\f{n+a+1}{2}}\int_{[0, \f{1}{4\lambda})}\int_{\mathbb B_{3}^+} x_{n+1}^{a} \{|\nabla U_R|^2+|U_R|^2\}\\
   &+ \sigma_{s}^{-2 \alpha}(c) \left\{ - \f{c}{M} \int_{t=0} x_{n+1}^a \ |\nabla f(X,0)|^2 \ G(X,c) \ dX + \alpha M\int_{t=0} x_{n+1}^a \ |f(X, 0)|^2 \ G(X,c) \ dX\right\}\notag\\
   \notag & \lesssim  M^{2\alpha+\f{n+a+1}{2}} \alpha^{2\alpha+\f{n+a+1}{2}}R^{4} \ \text{(using   Lemma \ref{reg1} for $U$ which implies the derivative bounds for $U_R$)}\\
   &+ \sigma_{s}^{-2 \alpha}(c) \left\{ - \f{c}{M} \int_{t=0} x_{n+1}^a \ |\nabla f(X,0)|^2 \ G(X,c) \ dX + \alpha M\int_{t=0} x_{n+1}^a \ |f(X, 0)|^2 \ G(X,c) \ dX\right\}.\label{step4-un}
\end{align}

\noindent{\textbf{Step 5:}} Since $\phi=1$ on $\mathbb{B}_{2}$ and $\eta=1$ on $[0, \f{1}{8\lambda})$,  for small enough $\rho<\f{1}{2},$ which will be chosen later and $0 < c \leq \f{\rho^2}{8\lambda}$, we obtain  
\begin{align}
\notag & \alpha^2 \int_{\mb{R}^{n+1}_+ \times [0, \iy)} x_{n+1}^{a} \sigma_{s}^{-2 \alpha}(t+c) \ f^2 \ G(X, t+c)\\
&\geq  \notag \alpha^2 \int_{[0, \f{1}{8\lambda})}\int_{\mathbb{B}_{2}^+}   \sigma_{s}^{-2 \alpha}(t+c) \ x_{n+1}^{a} U_R^2 \ G(X, t+c)\\
 &\geq \notag \alpha^2 \int_{[0, \f{\rho^2}{4\lambda})}\int_{\mathbb{B}_{2\rho}^{+}} \sigma_{s}^{-2 \alpha}(t+c) \  (t+c)^{-\f{n+a+1}{2}} e^{-\f{|X|^2}{4(t+c)}} x_{n+1}^{a} U_R^2\\
 \notag &\geq \alpha^2 \int_{[0, \f{\rho^2}{4\lambda})} (t+c)^{-2\alpha} (t+c)^{-\f{n+a+1}{2}} e^{-\f{\rho^2}{(t+c)}}\int_{\mathbb{B}_{2\rho}^{+}}  x_{n+1}^{a}U_R^2\\
 &\geq \alpha^2 \int_{[c, c+\f{\rho^2}{4\lambda})} t^{-2\alpha} t^{-\f{n+a+1}{2}} e^{-\f{\rho^2}{t}} \int_{\mathbb{B}_{\rho}^{+}}  x_{n+1}^{a}U_R^2(X, 0)\notag\\
& \geq \alpha^2 \f{1}{M} \int_{[\f{\rho^2}{8\lambda}, \f{\rho^2}{4\lambda})} t^{-2\alpha} t^{-\f{n+a+1}{2}} e^{-\f{\rho^2}{t}} R^{-(n+a+1)}\notag\ \ (\text{using}\ \eqref{mon-un-1} )\\
&\geq \alpha^2 \f{1}{M} \left(\f{\rho^2}{4\lambda}\right)^{-\big(2\alpha+\f{n+a+1}{2}\big)} e^{-8\lambda} \left(\frac{\rho^2}{4\lambda}\right)
R^{-(n+a+1)}\notag\\
&\geq \frac{\delta^2 4^{2\alpha+\f{n+a+1}{2}}\lambda^{2\alpha+\f{n+a+1}{2}+1}}{8 M}(e^{4/\delta^2}\rho^2)^{-2\alpha}\rho^{2-{(n+a+1)}}R^{-(n+a+1)}.\notag
\end{align}

\noindent{\textbf{Step 6:}} The above computation and \eqref{step4-un} implies that 
\begin{align}
&\frac{\delta^2 4^{2\alpha+\f{n+a+1}{2}}\lambda^{2\alpha+\f{n+a+1}{2}+1}}{8 M}(e^{4/\delta^2}\rho^2)^{-2\alpha}\rho^{2-{(n+a+1)}}R^{-(n+a+1)}\\
&\notag  \lesssim  M^{2\alpha+\f{n+a+1}{2}} \alpha^{2\alpha+\f{n+a+1}{2}} R^{4} \\
   &+ \sigma_{s}^{-2 \alpha}(c) \left\{ - \f{c}{M} \int_{t=0} x_{n+1}^a \ |\nabla f(X,0)|^2 \ G(X,c) \ dX + \alpha M\int_{t=0} x_{n+1}^a \ |f(X, 0)|^2 \ G(X,c) \ dX\right\}.\label{step6}
\end{align}
To absorb the first term in the right hand side into the left, we need
\begin{align}
\notag &\frac{\delta^2 4^{2\alpha+\f{n+a+1}{2}}\lambda^{2\alpha+\f{n+a+1}{2}+1}}{8 M}(e^{4/\delta^2}\rho^2)^{-2\alpha}\rho^{-\f{n+a+1}{2}}\geq 8M^{2\alpha+\f{n+a+1}{2}} \alpha^{2\alpha+\f{n+a+1}{2}}R^{4} R^{n+a+1}
\end{align}
In view of the fact that $\alpha\sim R^2$ the above will be guaranteed if we choose $\rho$ such that
\begin{align}
\notag& \frac{\delta^2 4^{2\alpha+\f{n+a+1}{2}}\lambda^{2\alpha+\f{n+a+1}{2}+1}}{8 M}(e^{4/\delta^2}\rho^2)^{-2\alpha}\rho^{-\f{n+a+1}{2}}\geq 8 M^{2\alpha+\f{n+a+1}{2}} \lambda^{2\alpha+\f{n+a+1}{2}} R^{4} R^{n+a+1}\ (\text{since}\ \lambda \geq \alpha)\\
\notag&\impliedby  \delta^24^{2\alpha+\f{n+a+1}{2}}(e^{4/\delta^2}M\rho^2)^{-2\alpha}\geq 64 M^{\f{n+a+1}{2}+1}  R^{4} R^{n+a+1}\\
\notag&\impliedby (e^{4/\delta^2}M\rho^2)^{-2\alpha}\geq M^{\f{n+a+1}{2}+1} \,  \  \ (\text{as}  \  \ \delta^2 4^{2\alpha+\f{n+a+1}{2}}\geq 64 R^{4} R^{n+a+1}\,\text{considering}\,\,\alpha\sim R^2),
\end{align}
the above inequality will be true provided \begin{equation}\label{choicer} e^{4/\delta^2}M \rho^2\leq \f{1}{16}.\end{equation} Therefore, for $\alpha\geq M R^2$, \eqref{step6}  and the fact that $\sigma_s(c) \geq c e^{-N}$ implies  that \begin{align}
& \alpha^{2\alpha} e^{-2N\alpha} c^{2\alpha} \lesssim \alpha M\int_{t=0} x_{n+1}^a \ |f(X, 0)|^2 \ G(X,c) \ dX.
\end{align}
Now letting  $\alpha \geq M R^2$ with  $M >> e^{2N}$, we now  put  $c = \frac{\epsilon}{4R^2}$ where $\epsilon \leq \frac{\rho^2 \delta^2}{2M}$ and   consequently obtain from above 
\begin{align*}
\int_{\mathbb{B}_{2}^{+}} x_{n+1}^a \ U_R(X, 0)^2 \ e^{-\frac{|X|^2 R^2}{\epsilon}} \ dX\geq e^{-MR^2 \log\big(\frac{1}{\epsilon}\big)}.
\end{align*}
This finishes the proof of (1).

We now proceed with the proof of (2).

For the above mentioned choice of $\rho$  as in \eqref{choicer} and  by taking large $\alpha,$ \eqref{step6} implies for $c \leq \f{\rho^2}{8 \lambda} \sim \f{\epsilon}{R^2}$ that the following inequality holds
\begin{align}
\label{help}&\f{c}{M} \int x_{n+1}^a \ |\nabla f(X,0)|^2 \ G(X,c) \ dX\leq  R^2 M\int x_{n+1}^a \ |f(X, 0)|^2 \ G(X,c) \ dX\\
\implies& 2c\int  x_{n+1}^a \ |\nabla f(X,0)|^2 e^{-\f{|X|^2}{4c}}+\f{n+a+1}{2}\int x_{n+1}^a \ |f(X, 0)|^2 e^{-\f{|X|^2}{4c}} dX\notag \\
&\leq M^3 R^2 \int_{t=0} x_{n+1}^a \ |f(X, 0)|^2 e^{-\f{|X|^2}{4c}} dX.\label{stepfinal}
\end{align}
At this point \eqref{stepfinal} combined with Lemma~\ref{do} allow us to infer for a new $M$  that the following doubling inequality holds
\begin{align}
\label{adb}
	\int_{\mathbb B_{2r}^+} U_R^2(X, 0) x_{n+1}^a dX \leq e^{M R^2} \int_{\mathbb B_{r}^+} U_R^2(X, 0) x_{n+1}^a dX
\end{align}
for all $0\leq r<\f{1}{2}.$ Now given $r \leq 1/2$, choose $k \in \mathbb N$ such that $2^{-k} \leq r \leq 2^{-k+1}$. Iterating the above doubling inequality when $r= 2^{-j}$ with $j =0,\dots, k-1$ we obtain
\begin{equation}\label{db1}
\int_{\mathbb B_1^+} U_R^2(X,0) x_{n+1}^a dX \leq e^{2M R^2 \log(1/r)} \int_{\mathbb B_r^+} U_R^2(X,0) x_{n+1}^a dX.
\end{equation}
The conclusion follows from \eqref{db1}  with a new $M$ by noting that
\[
\int_{\mathbb B_1^+} U_R^2(X,0) x_{n+1}^a dX \geq \int_{\mathbb B_\rho^+} U_R^2(X,0) x_{n+1}^a dX \geq R^{-(n+1+a)} \tilde \kappa.
\]

\end{proof}
From Theorem \ref{estimates-for-resc}, we obtain the following decay estimates at infinity for the solution  $U$ to \eqref{exprob}.
\begin{theorem}
\label{main-thin}
Let $U$ be a solution of the original problem \eqref{exprob}.  
\begin{enumerate}
    \item  There exists a universal large constant $M$ such that for all $x_{0}\in \mathbb{R}^n$ with $|x_{0}|\geq M$ we have
    \begin{align}
      \label{org-main-1}\int_{\mathbb{B}^+_{|x_0|/2}((x_0,0))} U^2(X, \tilde t) x_{n+1}^a dX\geq e^{-M|x_0|^2},
    \end{align}
   for $\tilde t \in [t_0, t_0 + \tilde \delta)$ where $t_0$ is as in Lemma \ref{com-conse} and $\tilde \delta$ is as in Lemma \ref{com-conse1}. 
\item 
Also we have
\begin{align}
\label{org-main-2}
\int_{\mathbb{B}^+_{1}((x_0,0))} U^2(X, \tilde t) x_{n+1}^a dX\geq e^{-M|x_0|^2\log(|x_0|)}, \ \tilde t \in [t_0, t_0 +\tilde \delta).   
\end{align}
\end{enumerate}
\end{theorem}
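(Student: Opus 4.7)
The plan is to derive both estimates by unrescaling the estimates in Theorem \ref{estimates-for-resc} applied to $U_R$, where $R=2|x_0|/\rho$ and $\rho$ is the universal constant produced by Theorem \ref{estimates-for-resc} corresponding to $\tilde\kappa$ from Lemma \ref{com-conse1}. The hypothesis \eqref{use1} of Theorem \ref{estimates-for-resc} holds for every $\tilde t\in[t_0,t_0+\tilde\delta)$ by \eqref{lob}, so for $|x_0|\geq M$ large we may freely apply both conclusions of that theorem. In both parts, the essential computation is the change of variables $Y=RX+(x_0,0)$, which sends $\mathbb{B}_r^+$ in the $X$ variable to $\mathbb{B}_{rR}^+((x_0,0))$ in the $Y$ variable, introduces a Jacobian factor $R^{-(n+1)}$ and a weight factor $R^{-a}$, and identifies $U_R(X,0)$ with $U(Y,\tilde t)$.

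For part (1), I would start from \eqref{res-main-1} and unrescale using the substitution above (noting $|X|^2 R^2=|Y-(x_0,0)|^2$) to obtain
\begin{equation*}
\int_{\mathbb{B}_{2R}^+((x_0,0))} Y_{n+1}^a\, U(Y,\tilde t)^2\, e^{-|Y-(x_0,0)|^2/\epsilon}\, dY \;\geq\; R^{n+1+a}\, e^{-MR^2\log(1/\epsilon)}.
\end{equation*}
I would split the left side into the contribution from $\mathbb{B}_{|x_0|/2}^+((x_0,0))$ and its complement inside $\mathbb{B}_{2R}^+((x_0,0))$. On the complement one has $|Y-(x_0,0)|\geq |x_0|/2$, so using the uniform bound $\|U\|_{L^\infty}\leq C$ from Lemma \ref{reg} together with $\int_{\mathbb{B}_{2R}^+} Y_{n+1}^a\, dY\lesssim R^{n+1+a}$, the tail is controlled by $CR^{n+1+a} e^{-|x_0|^2/(4\epsilon)}$. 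Since $R\sim |x_0|$, one can choose a \emph{universal} $\epsilon>0$ small enough so that $1/(4\epsilon)\geq (4M/\rho^2)\log(1/\epsilon)$, which ensures the tail is dominated by half of the right hand side. On the good piece one then bounds the Gaussian factor above by $1$, arriving at
\begin{equation*}
\int_{\mathbb{B}_{|x_0|/2}^+((x_0,0))} Y_{n+1}^a\, U(Y,\tilde t)^2\, dY \;\geq\; \tfrac{1}{2}R^{n+1+a}\, e^{-MR^2\log(1/\epsilon)} \;\geq\; e^{-M'|x_0|^2},
\end{equation*}
for a new universal $M'$, absorbing the polynomial factor $R^{n+1+a}\sim |x_0|^{n+1+a}$ into the exponential. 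This is precisely \eqref{org-main-1}.

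Part (2) is a more direct rescaling: applying \eqref{res-main-2} with the choice $r=1/R$ (which lies in $(0,1/2)$ once $|x_0|\geq M$), the same change of variables gives
\begin{equation*}
\int_{\mathbb{B}_1^+((x_0,0))} Y_{n+1}^a\, U(Y,\tilde t)^2\, dY \;\geq\; R^{n+1+a}\, e^{-MR^2\log(2R)} \;\geq\; e^{-M'|x_0|^2\log|x_0|},
\end{equation*}
since $R\sim |x_0|$ gives $\log(2R)\sim\log|x_0|$, and the polynomial prefactor is again absorbed by enlarging $M'$.

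The main obstacle is the tail estimate in part (1): one must choose $\epsilon$ small enough that the Gaussian cutoff $e^{-|Y-(x_0,0)|^2/\epsilon}$ outside $\mathbb{B}_{|x_0|/2}^+((x_0,0))$ overwhelms the $e^{-MR^2\log(1/\epsilon)}$ factor, and one has to check that this can be done with a single universal $\epsilon$ independent of $|x_0|$. Because the required condition reduces to an inequality $y\gtrsim c\log y$ (with $y=1/\epsilon$), this is easily arranged. Apart from this, both parts are bookkeeping exercises in the scaling $X\mapsto RX+(x_0,0)$ together with the $L^\infty$ regularity from Lemma \ref{reg}.
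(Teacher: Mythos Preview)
Your proposal is correct and follows essentially the same approach as the paper: unrescale \eqref{res-main-1} via $Y=RX+(x_0,0)$ with $R=2|x_0|/\rho$, split the resulting integral at $\mathbb{B}^+_{|x_0|/2}((x_0,0))$, bound the tail using Lemma \ref{reg} and absorb it by choosing a universal small $\epsilon$, and for part (2) directly unrescale \eqref{res-main-2} at $r=1/R$. The details you give (Jacobian $R^{-(n+1)}$, weight factor $R^{-a}$, the reduction of the $\epsilon$-condition to an inequality of the form $y\gtrsim c\log y$) match the paper's argument line by line.
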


\begin{proof}[Proof of part (1)]
Under the hypothesis of Theorem \ref{main} ( with $c=1$), we have from Lemma \ref{com-conse1} that for $\tilde t \in [t_0, t_0 +\tilde \delta)$
\begin{align}
\label{lower}
 \int_{\mathbb B_{1}^{+}} x_{n+1}^{a}U^2(X, \tilde t) dX\geq \tilde{\kappa}.
\end{align}
Now let $\rho$ be the number associated to $\tilde{\kappa}$ as in Theorem~\ref{estimates-for-resc}.  For  for each $\tilde t$ and $x_0$ such that   $|x_{0}| \geq M$, let $R:=2|x_{0}|/\rho$ and $U_R$ be as in \eqref{u_r}. From \eqref{lob} we have \begin{align}
R^{(n+a+1)}\int_{\mathbb{B}_{\rho}^{+}} U_{R}^2(X, 0) x_{n+1}^{a}\, dX=\int_{\mathbb{B}_{2|x_0|}^{+}((x_{0}, 0))} U^2(X, \tilde t) x_{n+1}^{a}\, dX\geq\int_{\mathbb B_{1}^{+}} x_{n+1}^{a}U^2(X, \tilde t) dX\geq \tilde{\kappa}.
\end{align} 
Thus $U_R$ satisfies the hypothesis in Theorem~\ref{estimates-for-resc}. Hence for small $\epsilon>0$ we have
\begin{align}
\notag &\int_{\mathbb{B}_{2}^{+}} x_{n+1}^a \ U_{R}(X, 0)^2 \ e^{-\frac{|X|^2 R^2}{\epsilon}} \ dX\geq e^{-MR^2 \log\big(\frac{1}{\epsilon}\big)}\\
\notag &\iff \int_{\mathbb{B}_{2R}^{+}((x_{0},0))} x_{n+1}^a \ U(X, \tilde t)^2 \ e^{-\frac{|X-(x_0, 0)|^2}{\epsilon}} \ dX\geq R^{n+a+1} e^{-MR^2 \log\big(\frac{1}{\epsilon}\big)}\\
\notag &\implies \left(\int_{\mathbb{B}_{|x_{0}|/2}^{+}((x_{0},0))} .. +\int_{\mathbb{B}_{4|x_{0}|/\rho}^{+}(x_{0})\setminus\mathbb{B}_{|x_{0}|/2}^{+}(x_{0})} .. \right) \geq R^{n+a+1} e^{-MR^2 \log\big(\frac{1}{\epsilon}\big)}\\
\label{int1}&\implies \int_{\mathbb{B}_{|x_{0}|/2}^{+}((x_{0},0))} x_{n+1}^a \ U(X, \tilde t)^2\,dX+CR^{n+a+1} e^{-R^2 \rho^2/16\epsilon}\geq R^{n+a+1} e^{-MR^2 \log\big(\frac{1}{\epsilon}\big)},
\end{align}
where we have used the fact that $\|U\|_{L^{\infty}}\leq C$ to bound the integral
\[
\int_{\mathbb{B}_{4|x_{0}|/\rho}^{+}(x_{0})\setminus\mathbb{B}_{|x_{0}|/2}^{+}(x_{0})} .. .\]
in \eqref{int1} above.
 Now if  $\epsilon>0$ is chosen  sufficiently small,  then the term  $CR^{n+a+1} e^{-R^2 \rho^2/16\epsilon}$ can be absorbed in the right hand side of \eqref{int1}.  Consequently,  we  can conclude that for a new $M$ (depending also on $\epsilon$) the following estimate holds
$$\int_{\mathbb{B}_{|x_{0}|/2}^{+}((x_{0},0))} x_{n+1}^a \ U(X, \tilde t)^2\,dX \geq e^{-MR^2}.$$
This completes the proof of \eqref{org-main-1}.

\medskip

To prove \eqref{org-main-2}, we apply \eqref{res-main-2} to the function $U_{R}$ at the scale $r=\frac{1}{R},$ which yields
\begin{align}
\notag &\int_{\mathbb B_{1/R}^+} U_{R}^2(X, 0) x_{n+1}^a dX\geq e^{-MR^2 \log(2R))}\\
\notag \implies &\int_{\mathbb B_{1}^+((x_{0},0))} U^2(X, \tilde t)\, x_{n+1}^a dX\geq R^{n+a+1}e^{-MR^2 \log(2R)}\\
&\implies\int_{\mathbb B_{1}^+((x_{0},0))} U^2(X, \tilde t)\, x_{n+1}^a dX\geq e^{-MR^2 \log(2R)},
\end{align}
since $R \geq 1$. The conclusion thus follows with a larger $M$ by noting that $|x_0| \sim R$ once $\rho$ gets fixed as in Theorem \ref{estimates-for-resc}.
\end{proof}

As a direct consequence of the estimate \eqref{org-main-2} in Theorem \ref{main-thin}, the  following asymptotic decay estimate holds  in space-time regions for the solution  $U$  to the extension problem  \eqref{exprob}. \begin{theorem}
\label{main-thick}  
Under the assumption of Theorem~\ref{main-thin}, there exist universal constants $M$ and $ \tilde \delta\in (0, 1)$ such that
for  $|x_0|\geq M$ we have
\begin{equation}\label{lw2}
\int_{\mathbb{B}^+_{1}((x_0,0))\times [t_0+ \tilde \delta/2,  t_{0}+3\tilde \delta/4)} U^2(X, t) x_{n+1}^a dX\,dt\geq e^{-M|x_0|^2\log(|x_0|)}.
\end{equation}
where $t_0$ is as in Lemma \ref{com-conse}.

\end{theorem}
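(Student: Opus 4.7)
The plan is to obtain Theorem \ref{main-thick} as a direct integration of the pointwise-in-time estimate \eqref{org-main-2} from Theorem \ref{main-thin}. Recall that \eqref{org-main-2} gives, for any $\tilde t \in [t_0,t_0+\tilde\delta)$ and $|x_0|\geq M$,
\[
\int_{\mathbb{B}^+_{1}((x_0,0))} U^2(X,\tilde t)\,x_{n+1}^a\,dX \;\geq\; e^{-M|x_0|^2\log|x_0|}.
\]
The right-hand side is independent of $\tilde t$, so the estimate is uniform on the whole time slab $[t_0,t_0+\tilde\delta)$.

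First I would note that $[t_0+\tilde\delta/2,\,t_0+3\tilde\delta/4) \subset [t_0,t_0+\tilde\delta)$, so we may apply \eqref{org-main-2} pointwise for every $t$ in this subinterval. Integrating the inequality in $t$ over $[t_0+\tilde\delta/2,\,t_0+3\tilde\delta/4)$ and using Fubini yields
\[
\int_{\mathbb{B}^+_{1}((x_0,0))\times [t_0+\tilde\delta/2,\, t_0+3\tilde\delta/4)} U^2(X,t)\,x_{n+1}^a\,dX\,dt \;\geq\; \frac{\tilde\delta}{4}\,e^{-M|x_0|^2\log|x_0|}.
\]
Since $\tilde\delta\in(0,1)$ is a universal constant (fixed as in Lemma \ref{com-conse1}), for $|x_0|\geq M$ sufficiently large the factor $\tilde\delta/4$ is absorbed by enlarging $M$ to some new universal $M'$, giving exactly \eqref{lw2} with $M$ replaced by $M'$.

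There is no real obstacle here; the whole content is already packed into the pointwise bound \eqref{org-main-2}. The only minor point worth emphasizing is that the interval $[t_0+\tilde\delta/2,\,t_0+3\tilde\delta/4)$ lies strictly inside $[t_0,t_0+\tilde\delta)$, which is why Theorem \ref{main-thin}(2) applies at each time level in the range of integration. This ``interior'' choice of the time interval will later be convenient in Section \ref{s:main} when the propagation of smallness estimate from \cite{AryaBan} is invoked to transfer the bulk decay back to the boundary, since that transfer requires the space-time region to sit away from the initial slice $t=t_0$.
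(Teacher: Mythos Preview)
Your argument is correct and matches the paper's approach: the paper states Theorem \ref{main-thick} as ``a direct consequence of the estimate \eqref{org-main-2} in Theorem \ref{main-thin}'' without further proof, and integrating \eqref{org-main-2} over the time interval and absorbing the factor $\tilde\delta/4$ into $M$ is precisely how that consequence is obtained.
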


\subsection{Propagation of smallness and the Proof of Theorem \ref{main}}
We now transfer the decay estimate at the bulk as in Theorem \ref{main-thick}  to  the boundary via an appropriate propagation of smallness estimate derived in \cite[Corollary 4.4]{AryaBan} using which Theorem \ref{main} follows. 

\begin{proof}[Proof of Theorem \ref{main}]
We first note that from the hypothesis of Theorem \ref{main} ( recall  that we are assuming $c=1$),  we infer that   the estimate \eqref{lw2}  in Theorem \ref{main-thick} holds.   We now use the following  variant of the propagation of smallness estimate as derived in \cite[Corollary 4.4]{AryaBan}.
\begin{align}\label{pos}
&||x_{n+1}^{a/2} U||_{L^{2}(\mathbb{B}^+_{1}((x_0,0))\times [t_0+ \tilde \delta/2, t_{0}+3\tilde \delta/4))}\\& \leq C||u||_{L^{2}(\mathbb R^{n+1})}^{1-\theta} \left( ||Vu||_{L^{2}(B_{3/2} (x_0) \times [t_0 + \tilde \delta/4, t_0 + 5 \tilde \delta /6))}^{\theta} + ||u||_{W^{2,2}(B_{3/2} (x_0) \times [t_0 + \tilde \delta/4, t_0 + 5 \tilde \delta /6))}^{\theta} \right)\notag\\
& + C\left( ||Vu||_{L^{2}(B_{3/2} (x_0) \times [t_0 + \tilde \delta/4, t_0 + 5 \tilde \delta /6))} + ||u||_{W^{2,2}(B_{3/2} (x_0) \times [t_0 + \tilde \delta/4, t_0 + 5 \tilde \delta /6))} \right), \notag
\end{align}
where $\theta \in (0,1)$ is universal and  $$ 	||u||_{W^{2,2}}\overset{def}=||u||_{L^2} + ||\nabla_x u||_{L^2}+ ||\nabla^2_x u||_{L^2}+ ||u_t||_{L^2}.$$ 
Note that \eqref{pos} follows from \cite[Corollary 4.4]{AryaBan} by  a translation in space and a standard covering argument. Note that in view of \eqref{vasump}, the right hand side of \eqref{pos} is upper bounded by 
$$C\left( ||u||_{W^{2,2}(B_{3/2} (x_0) \times [t_0 + \tilde \delta/4, t_0 + 5 \tilde \delta /6))}+ ||u||_{W^{2,2}(B_{3/2} (x_0) \times [t_0 + \tilde \delta/4, t_0 + 5 \tilde \delta /6))}^{\theta}\right).$$   Now since we are interested in a lower bound, so without loss of generality we may assume that $$||u||_{W^{2,2}(B_{3/2} (x_0) \times [t_0 + \tilde \delta/4, t_0 + 5 \tilde \delta /6))} \leq 1.$$
Using this along with \eqref{lw2}, we obtain that the following inequality holds for a larger $M$ universal and $|x_0| \geq M$
\begin{equation}\label{lw5}
||u||_{W^{2,2}(B_{3/2} (x_0) \times [t_0 + \tilde \delta/4, t_0 + 5 \tilde \delta /6))}  \geq e^{-M|x_0|^2\log(|x_0|)}.
\end{equation}
In order to get an $L^{2}$ decay as claimed in Theorem \ref{main},  we now make use of the interpolation type inequalities in Lemma \ref{interpol}. Let $\phi$ be a smooth function supported in $\mb{B}_{7/4} ((x_0,0)) \times  (t_0 + \tilde \delta/8, t_0 +11 \tilde /12)$ such that $\phi \equiv 1$ in $\mb{B}_{3/2}((x_0,0)) \times  [t_0 + \tilde \delta/4, t_0 + 5 \tilde \delta /6))$. Then by applying \eqref{inter1} to $f$ we get also by using the regularity estimates in Lemma \ref{reg1} that the following holds for any $\eta_1 \in (0,1)$
\begin{align}\label{ino1}
& ||\nabla_x u||_{L^{2}(B_{3/2}(x_0) \times [t_0 + \tilde \delta/4, t_0 + 5 \tilde \delta /6)) }\leq   ||\nabla_x f||_{L^{2}(\mathbb R^{n+1})}\\& \leq  C \eta_1^s ||x_{n+1}^{a/2} U||_{L^2(\mb{B}_{7/4}^+ ((x_0, 0)) \times (t_0 + \tilde \delta/8, t_0 +11 \tilde \delta/12))} + C \eta_1^{-1} ||u||_{L^2(B_{7/4} (x_0) \times (t_0 + \tilde \delta/8, t_0 +11 \tilde \delta /12))}.\notag\end{align}
Similarly, by applying \eqref{inter1} to $\nabla_x f$ and by using  the second derivative estimates in Lemma \ref{reg1} we get for any $\eta \in (0,1)$

\begin{equation}\label{ino2}
||\nabla_x^2 u||_{L^{2}(B_{3/2}(x_0) \times [t_0 + \tilde \delta/4, t_0 + 5 \tilde \delta /6)) } \leq  C \eta^s ||x_{n+1}^{a/2} U||_{L^2(\mb{B}_{2}^+ ((x_0, 0)) \times (t_0 + \tilde \delta/16, t_0 + \tilde \delta))} + C \eta^{-1} ||\nabla_x f||_{L^{2}(\mathbb R^{n+1})} .\end{equation}

Then using \eqref{ino1} in \eqref{ino2}, we thus obtain

\begin{align}\label{ino4}
&||\nabla_x^2 u||_{L^{2}(B_{3/2}(x_0) \times [t_0 + \tilde \delta/4, t_0 + 5 \tilde \delta /6)) }  \leq C\eta^s ||x_{n+1}^{a/2} U||_{L^2(\mb{B}_{2} ^+ ((x_0, 0))\times (t_0 + \tilde \delta/16, t_0 + \tilde \delta))}\\
& + C \eta^{-1} \eta_1^s ||x_{n+1}^{a/2} U||_{L^2(\mb{B}_{7/4}^+((x_0, 0)) \times (t_0 + \tilde \delta/8, t_0 +11 \tilde \delta/12))} + C(\eta \eta_1)^{-1} ||u||_{L^2(B_{7/4}  (x_0) \times (t_0 + \tilde \delta/8, t_0 +11 \tilde \delta /12))}. \notag \end{align}
We now take $\eta_1=\eta^3$. This ensures that
\begin{equation}\label{eta}
\eta^{-1} \eta_1^s= \eta^{3s-1} \leq \eta^{s}\ \text{as $s\geq 1/2$ and $\eta<1$}.
\end{equation}

 Substituting this value of $\eta_1$  in \eqref{ino4}, using \eqref{eta}  and also by using Lemma \ref{reg} we find
\begin{equation}\label{ino5}
||\nabla_x^2 u||_{L^{2}(B_{3/2}(x_0) \times [t_0 + \tilde \delta/4, t_0 + 5 \tilde \delta /6)) } \leq C \eta^s + C \eta^{-4} ||u||_{L^2(B_2(x_0) \times [t_0 +t_0 + \tilde \delta))}.
\end{equation}
Similarly  by applying \eqref{inter2} to $f$ and by using the estimates in Lemma \ref{reg1} and Lemma \ref{reg} we find
\begin{equation}\label{ino8}
||u_t||_{L^{2}(B_{3/2}(x_0) \times [t_0 + \tilde \delta/4, t_0 + 5 \tilde \delta /6)) } \leq C \eta^s + C \eta^{-4} ||u||_{L^2(B_2(x_0) \times [t_0 +t_0 + \tilde \delta))}.\end{equation}
Thus from \eqref{ino1}, \eqref{ino5} and \eqref{ino8} it follows that
\begin{equation}\label{gt1}
||u||_{W^{2,2}(B_{3/2} (x_0) \times [t_0 + \tilde \delta/4, t_0 + 5 \tilde \delta /6))} \leq C\eta^s  + C\eta^{-4}||u||_{L^{2}(B_2(x_0) \times  [t_0, t_0 +\tilde \delta))}.
\end{equation}
Now using \eqref{lw5}, we deduce from \eqref{gt1} that the following inequality holds for $|x_0| \geq M$,
\begin{equation}\label{gt2}
e^{-M|x_0|^2\log(|x_0|)} \leq C\eta^s + C\eta^{-4}||u||_{L^{2}(B_2(x_0) \times  [t_0, t_0 +\tilde \delta))}.
\end{equation}
Now by letting \begin{equation}\label{choice1}\eta^s= \frac{e^{-M|x_0|^2\log(|x_0|)}}{2C},\end{equation} we find  that the  first term  on the right hand side in \eqref{gt2} can  be absorbed in the left hand side and we consequently obtain for a new $M$
\begin{equation}\label{gt4}
\frac{\eta^4 e^{-M|x_0|^2\log(|x_0|)} }{2C} \leq ||u||_{L^{2}(B_2(x_0) \times  [t_0, t_0 +\tilde \delta))}. \end{equation}
Now by noting that in view of \eqref{choice1},  we have that $$\eta^4 \sim e^{-\frac{4M}{s}|x_0|^2\log(|x_0|)}.$$ Therefore, by using this in \eqref{gt4}, we find that the conclusion follows with a new $M$  by also using that

\[
||u||_{L^{2}(B_2(x_0) \times  [0, 1))} \geq ||u||_{L^{2}(B_2(x_0) \times  [t_0, t_0 +\tilde \delta))}.\]   This finishes the proof of Theorem \ref{main}  by noting that we have assumed  $c=1$ in Theorem \ref{main} (for the sake of simpler exposition of the ideas) and also  by observing that we are working with the backward version of the problem as in \eqref{exprob}.

\end{proof}
We now use the estimate in Theorem \ref{main} to finish the proof of the Landis-Oleinik type result in Corollary \ref{main0}.

\begin{proof}[Proof of Corollary \ref{main0}]
We show that
\begin{equation}\label{ef1}
 ||u||_{L^{2}(B_{1/2} \times (-1/4, 0])} =0. \end{equation}
 Suppose instead that is not the case and we have
 \begin{equation}\label{ef2}
 ||u||_{L^{2}(B_{1/2} \times (-1/4, 0])} \geq \theta>0.
 \end{equation}  
 Then by applying Theorem \ref{main} corresponding to this $\theta$, there exists some $M=M(\theta)$ such that
 \begin{equation}\label{ef4}
\int_{B_2(x_0) \times (-1, 0)} u^2(x, t)\,dx\,dt  \geq e^{-M |x_0|^2 \log |x_0|}\ \ \ \text{holds for all}\,\, |x_0| \geq M.
\end{equation} 
Now on the other hand, the hypothesis (assuming $c=1$) implies that   \begin{equation}\label{so1} \int_{-1}^0 u^2(x,t)\,dt \leq C e^{-|x|^{2+\epsilon}}\ \ \ \text{for all}\ x \in \mathbb R^n. \end{equation}
Therefore, by integrating  \eqref{so1} over the region $B_2(x_0)$ for $|x_0| \geq M$ with $M$ as in Theorem \ref{main} ( corresponding to the $\theta$ in \eqref{ef2})  we find  for a new $C$  that the following holds
\begin{equation}\label{so2}
\int_{B_2(x_0) \times (-1, 0)} u^2(x, t)\,dxdt \leq  C e^{-  \frac{|x_0|^{2+\epsilon}}{2^{2+\epsilon}  } }, \end{equation}
where we have used that for $x \in B_2(x_0)$, $|x| \geq \frac{|x_0|}{2}$ which can be ensured for $M>4$. This clearly contradicts \eqref{ef4} for large $|x_0|$ as 
\[
e^{-M |x_0|^2 \log |x_0|}\gg e^{-  \frac{|x_0|^{2+\epsilon}}{2^{2+\epsilon}  } },\]
as $|x_0|=R \to \infty$.  Thus \eqref{ef1} holds. So in particular, we have that $u$ vanishes to infinite order in space-time at $(0,0)$. Now we can apply the backward uniqueness result in \cite[Theorem 1.2]{BG}  to conclude that $u \equiv 0$ in $\mathbb R^{n} \times [-T, 0]$. 
\end{proof}

\begin{remark}
In the case when  the nonlocal equation \eqref{e0} holds for $t>0$, then we can also conclude that $u(\cdot, t) =0$ for $t>0$ by invoking  the forward uniqueness result in \cite{BG2}.
\end{remark}


\begin{thebibliography}{10}



\bibitem{AryaBan}V. Arya \& A. Banerjee.\emph{Quantitative uniqueness for fractional heat type operators,} {Calc. Var.,} \textbf{62} (2023), no. 195, pp. 47. \url{https://doi.org/10.1007/s00526-023-02535-1}




\bibitem{ABDG}
V. Arya, A. Banerjee, D. Danielli \& N. Garofalo. \emph{Space-like strong unique continuation for some fractional parabolic equations,}
{ J. Funct. Anal.}, \textbf{284} (2023), no. 1, Paper No. 109723, pp. 38.


\bibitem{AKS}
N. Aronszajn, A. Krzywicki \& J. Szarski, \emph{A unique continuation theorem for exterior differential forms on Riemannian manifolds},Ark. Mat. \textbf{4}~ 1962 417-453 (1962).

\bibitem{AT}
A. Audrito, S. Terracini.  \emph{On the nodal set of solutions to a class of nonlocal parabolic reaction-diffusion equations,}  arXiv:1807.10135, to appear in Memoirs of AMS.





\bibitem{BG}
A. Banerjee \& N. Garofalo.
\emph{Monotonicity of generalized frequencies and the strong unique
  continuation property for fractional parabolic equations,} { Adv. Math.}, \textbf{336} (2018), 149--241.

\bibitem{BG1}
A. Banerjee \& N. Garofalo.
\emph{On the space-like analyticity in the extension problem for nonlocal
  parabolic equations}, { Proc. Amer. Math. Soc. }\textbf{151}~ (2023), no. 3, 1235--1246.

  
 \bibitem{BG2}
 A. Banerjee \& N. Garofalo, \emph{On the forward in time propagation of zeros in fractional heat type problems}, arXiv:2301.12239, to appear in Archiv der Mathematik.	
		
		
		
		
\bibitem{BVS}A. Banerjee, V. P. Krishnan \& S. Senapati.  \emph{The Calderón problem for space-time fractional parabolic operators with variable coefficients,} arXiv:2205.12509.



\bibitem{BS1}
A. Banerjee \& S. Senapati. \emph{Extension problem for the fractional parabolic Lam\'e operator and unique continuation}, arXiv:2208.11598.


\bibitem{AA} 
A. Banerjee \& A. Ghosh.
\emph{Sharp asymptotic of solutions to some nonlocal parabolic equations}, 	arXiv:2306.00341.

\bibitem{BK}
		J. Bourgain \& C. Kenig. \emph{On localization in the continuous Anderson-Bernoulli model in higher dimension,} {Invent. Math.,} \textbf{161}~(2005), 389--426.

\bibitem{BL}
	K. Bellova \& F. Lin. \emph{Nodal sets of Steklov eigenfunctions,} {Calc. Var. Partial Differential Equations,} \textbf{54}~ (2015), no. 2, 2239--2268.
	
\bibitem{BS}
A. Biswas \& P. Stinga.
\emph{Regularity estimates for nonlocal space-time master equations in
  bounded domains,}
\newblock { J. Evol. Equ.}, \textbf{21} (2021), no. 1, 503--565.


\bibitem{Ch}
X. Chen, \emph{A strong unique continuation theorem for parabolic equations}, Math. Ann. \textbf{311}~ (1996) 603-630.

\bibitem{CS}
L. Caffarelli \& Luis Silvestre.
\newblock\emph{An extension problem related to the fractional {L}aplacian,}
{Comm. Partial Differential Equations}, \textbf{32}~(2007), no. 7--9,1245--1260.



\bibitem{EF_2003}
L.~Escauriaza \& F.~J. Fern\'{a}ndez.
\newblock\emph{Unique continuation for parabolic operators,}
\newblock { Ark. Mat.}, \textbf{41}~(2003), no. 1, 35--60.





\bibitem{EFV_2006}
L.~Escauriaza, F.~J. Fern\'{a}ndez \& S.~Vessella.
\newblock \emph{Doubling properties of caloric functions,}
\newblock { Appl. Anal.}, \textbf{85} (2006), no. 1--3, 205--223.

\bibitem{EKVP}
 L. Escauriaza, C. Kenig, G. Ponce, \& L. Vega, \emph{ Decay at infinity of caloric functions within characteristic hyperplanes}, Math. Res. Lett. \textbf{13}~ (2006), no. 2-3, 441-453.
 
 
\bibitem{EKVP1}
 L. Escauriaza, C. Kenig, G. Ponce, \& L. Vega, \emph{ Hardy's uncertainty principle, convexity and Schr\"odinger evolutions},  J. Eur. Math. Soc. (JEMS) \textbf{10}~ (2008), no. 4, 883-907. 
 
 \bibitem{EKPV16}
L. Escauriaza, C. E.~Kenig, G. Ponce \& L. Vega, \emph{Hardy Uncertainty Principle, Convexity and Parabolic Evolutions,} Commun. Math. Phys. \textbf{346} (2016), 667--678.

\bibitem{EV}
	L. Escauriaza \& S. Vessella. \newblock {\em Optimal three cylinder inequalities for solutions to parabolic equations with Lipschitz leading coefficients}, { Inverse Problems: Theory and Applications,} Cortona/Pisa, 2002, in: Contemp. Math., vol. 333, Amer. Math. Soc., Providence, RI, 2003, pp. 79--87.
	
	
	\bibitem{ESS}
L. Escauriaza, G. Seregin \& V. Sverak, \emph{Backward uniqueness for parabolic equations}, Arch. Rational Mech. Anal. \textbf{169}~ (2003) 147-157.



\bibitem{FF}
M. Fall \& V. Felli.
\newblock{\em Unique continuation property and local asymptotics of solutions to
  fractional elliptic equations,}
\newblock { Comm. Partial Differential Equations}, \textbf{39} (2014), no. 2, 354--397.


\bibitem{FPS}
V. Felli, A. Primo \& G. Siclari, \emph{On fractional parabolic equations with hardy-type potentials}, arXiv:2212.03744v2.


\bibitem{Gcm}
N. Garofalo.
\newblock{\em Two classical properties of the {B}essel quotient {$I_{\nu+1}/I_\nu$}
  and their implications in pde's,} Advances in harmonic analysis and partial differential
  equations, Contemp. Math., vol. 748, Amer. Math.
  Soc., Providence, RI, 2020, pp. 57--97.
  
  \bibitem{GL}
N. Garofalo \& F. Lin, \emph{Monotonicity properties of variational integrals, $A_p$ weights and unique continuation}, Indiana Univ. Math. J. \textbf{35}~(1986),  245-268.  

\bibitem{GR}
I.~S. Gradshteyn \& I.~M. Ryzhik.
\newblock {\em Table of integrals, series, and products,} 7th ed., Elsevier/Academic Press, Amsterdam, 2007.


%\bibitem{AHLMT}
%Pascal Auscher, Steve Hofmann,  Michael Lacey,  Alan McIntosh and Ph. Tchamitchian.
%\newblock The solution of the {K}ato problem for divergence form elliptic
  %operators with {G}aussian heat kernel bounds.
%\newblock {\em Ann. of Math. (2)}, 156(2):623--631, 2002.

\bibitem{Jr1}
B. F. Jones. \emph{Lipschitz spaces and the heat equation,} {J. Math. Mech.,} \textbf{18}~(1968/69), 379--409. 		
\bibitem{LLR}
R. Lai, Y. Lin \& A. R\"{u}land.
\newblock {\em The {C}alder\'{o}n problem for a space-time fractional parabolic
  equation,}
\newblock {SIAM J. Math. Anal.}, \textbf{52} (2020), no. 3, 2655--2688.

\bibitem{Le}
N.~N. Lebedev.
\newblock {\em Special functions and their applications,} Revised edition, translated from the Russian and edited by Richard A. Silverman. Unabridged and corrected republication, Dover Publications, Inc., New York, 1972.



\bibitem{Li}
G. Lieberman.
\newblock {\em Second order parabolic differential equations,}
\newblock{World Scientific Publishing Co., Inc.,} River Edge, NJ, 1996.

\bibitem{LM}
J.-L. Lions \& E.~Magenes. \emph{Non-homogeneous boundary value problems and applications. Vol. I.,} Translated from the French by P. Kenneth. Die Grundlehren der mathematischen Wissenschaften, Band 181, Springer-Verlag, New York-Heidelberg, 1972. 
  
  \bibitem{LO}
  E. Landis \& O. Oleinik, \emph{Generalized analyticity and some related properties of solutions of elliptic and parabolic equations}, Russian Math. Surv. 29 (1974) 195-212. 
  
\bibitem{MZ}
S. Micu \& E. Zuazua, \emph{On the lack of null-controllability of the heat equation in the half space}, Port. Math. \textbf{58}~ (2001), no.1, 1-24.

\bibitem{NS}
K.~Nystr\"{o}m \& O.~Sande.
\newblock{\em Extension properties and boundary estimates for a fractional heat
  operator,}
\newblock { Nonlinear Anal.}, \textbf{140} (2016), 29--37.

\bibitem{Po}
C. Poon.
\emph{Unique continuation for parabolic equations,}
\newblock {Comm. Partial Differential Equations}, \textbf{21} (1996), no. 3--4, 521--539.

\bibitem{Ru}
A.~R\"uland.
\emph {On some rigidity properties in PDEs,} 2013,
\newblock {Dissertation, Rheinischen Friedrich-Wilhelms-Universit\"at Bonn}.

\bibitem{Ru1}
A.~ R\"uland.
\newblock {\em On quantitative unique continuation properties of fractional Schr\"{o}dinger equations: doubling, vanishing order and nodal domain estimates,} {Trans. Amer. Math. Soc.,} \textbf{369} (2017), no. 4, 2311--2362.

\bibitem{Ru2}
A. R\"{u}land.
\emph{Unique continuation for fractional {S}chr\"{o}dinger equations with
  rough potentials,}
\newblock {Comm. Partial Differential Equations}, \textbf{40} (2015), no. 1, 77--114.

\bibitem{RS}
A. R\"{u}land \& Mikko Salo.
\emph{Quantitative approximation properties for the fractional heat
  equation,}
\newblock {Math. Control Relat. Fields}, \textbf{10} (2020), no. 1, 1--26.

\bibitem{RSV}
L. Roncal, D. Stan \& L. Vega, \emph{Carleman type inequalities for fractional relativistic operators},
Rev. Mat. Complut. \textbf{36}~ (2023), no. 1, 301-332.

\bibitem{RW}
A. R\"uland \& J.N. Wang. {\em On the fractional Landis conjecture,} {J. Funct. Anal.,} \textbf{277} (2019), no. 9, 3236--3270.

\bibitem{Samko}
S. Samko.
\newblock {\em Hypersingular integrals and their applications}, volume~5 of
  {Analytical Methods and Special Functions,}
\newblock Taylor \& Francis Group, London, 2002.

\bibitem{SS}
G. Seregin \& V. Sverak, \emph{The Navier-Stokes equations and backwards uniqueness, Nonlinear Problems of Mathematical Physics and Related Topics}, Vol. 2, Kluwer Acad./Plenum Publ., 2002, pp. 359-370.



\bibitem{ST}
P. Stinga \& J. Torrea.
\newblock{\em Regularity theory and extension problem for fractional nonlocal
  parabolic equations and the master equation,}
\newblock{SIAM J. Math. Anal.,} \textbf{49} (2017),  no. 5, 3893--3924.






\bibitem{Yu}
H. Yu. \emph{Unique continuation for fractional orders of elliptic equations,}
{Ann. PDE.,} \textbf{3} (2017), no. 2, Paper No. 16, pp. 21.



\bibitem{Zhu0}
J. Zhu. \emph{Doubling property and vanishing order of Steklov eigenfunctions}, {Comm. Partial Differential Equations,} \textbf{40}~ (2015), no. 8, 1498--1520.	


\end{thebibliography}
\end{document}